\numberwithin{equation}{section}
\newtheorem{thm}{Theorem}[section]
\newtheorem{crl}[thm]{Corollary}
\newtheorem{lmm}[thm]{Lemma}
\newtheorem{prp}[thm]{Proposition}
\theoremstyle{definition}
\newtheorem{dfn}[thm]{Definition}
\newtheorem{exa}[thm]{Example}%%%
\newtheorem{nota}[thm]{Notation}%%%
\theoremstyle{remark}
\newtheorem{rem}[thm]{Remark}%%%
\newenvironment{eq-text}
{\begin{equation} \begin{minipage}[t]{0.85\linewidth}}
{\end{minipage} \end{equation} \ignorespacesafterend}
\DeclareMathOperator{\N}{\mathbb{N}}
\DeclareMathOperator{\Z}{\mathbb{Z}}
\DeclareMathOperator{\R}{\mathbb{R}}
\DeclareMathOperator{\C}{\mathbb{C}}
\newcommand{\upa}{\uparrow}
\newcommand{\ti}{\tilde}
\newcommand{\ii}{^{-1}}
\newcommand{\pa}{\partial}
\newcommand{\ee}{\mathrm e}
\newcommand{\I}{\mathrm i}
\newcommand{\defeq}{\coloneqq} % \newcommand{\defeq}{:=}
\newcommand{\col}{\colon\thinspace}          %% for a map f \col A \to B
\newcommand{\ens}{\enspace}
\newcommand{\ie}{{\emph{i.e.}}\ }
\newcommand{\dd}{{\mathrm d}}
\newcommand{\un}[1]{{\underline{#1}}}
\newcommand{\ov}[1]{\overline{#1}}
\newcommand{\wt}{\widetilde}
\newcommand{\dem}{\tfrac{1}{2}}
\DeclarePairedDelimiter\floor{\lfloor}{\rfloor}%
\DeclarePairedDelimiter\abs{\lvert}{\rvert}%
\newcommand{\be}{\beta}
\newcommand{\eps}{\varepsilon}
\newcommand{\ph}{\varphi}
\newcommand{\sig}{\sigma}
\newcommand{\Sig}{\Sigma}
\newcommand{\Om}{\Omega}
\newcommand{\om}{\omega}
\newcommand{\Ga}{\Gamma}
\newcommand{\ga}{\gamma}
\newcommand{\De}{\Delta}
\newcommand{\de}{\delta}
\newcommand{\la}{\lambda}
\newcommand{\tht}{\theta}
\newcommand{\ze}{\zeta}
\newcommand{\gD}{{\vec{\mathscr D}}}
\newcommand{\gE}{\mathscr E}
\newcommand{\gL}{\mathscr L}
\newcommand{\gR}{\mathscr R}
\newcommand{\gO}{\mathscr O}
\newcommand{\cB}{\mathcal{B}}
\newcommand{\cM}{\mathcal{M}}
\newcommand{\cN}{\mathcal{N}}
\newcommand{\cS}{\mathcal{S}}
\newcommand{\wrt}{with respect to}
\newcommand{\lhs}{left-hand side}
\newcommand{\rhs}{right-hand side}
\newcommand{\dfs}{d.f.s.}
\newcommand{\ddfs}{d.d.f.s.}
\newcommand{\fp}{\mathfrak{p}}
\newcommand{\fq}{\mathfrak{q}}
\newcommand{\uO}{\un{0}}
\newcommand{\uU}{\un{U}}
\newcommand{\uV}{\un{V}}
\newcommand{\uga}{\un{\ga}}
\newcommand{\uze}{\un{\ze}}
\newcommand{\vuze}{\vec{\un\ze}}
\newcommand{\Rp}{\R_{\geq0}}
\newcommand{\Zpp}{\Z_{>0}}
\newcommand{\Zmm}{\Z_{<0}}
\newcommand{\dist}{\operatorname{dist}}
\newcommand{\card}{\operatorname{card}}
\newcommand{\Det}{\operatorname{det}}
\newcommand{\begla}{\begin{equation}}
\newcommand{\beglab}[1]{\begin{equation}	\label{#1}}
\newcommand{\edla}{\end{equation}}
\newcommand{\dst}{\displaystyle}
\newcommand{\ID}{\mathop{\hbox{{\rm Id}}}\nolimits}
\newcommand{\isom}{\xrightarrow{\smash{\ensuremath{\sim}}}}
\newcommand\restr[2]{{% we make the whole thing an ordinary symbol
  \left.\kern-\nulldelimiterspace % automatically resize the bar with \right
  #1 % the function
%  \vphantom{\big|} % pretend it's a little taller at normal size
  \right|_{#2} % this is the delimiter
  }}
\newcommand{\bcb }%{}
\newcommand{\ecb }%{}
{\end{color} }
\newcommand{\bcr }%{}
\newcommand{\ecr }%{}
{\end{color} }
\newcommand{\ff}%{}
\newcommand{\bdv}{w.r.t.\ bounded direction variation}
\newcommand{\dv}{^{\,\operatorname{dv}}}
\begin{document}
%
% The text goes here.  
% Be sure to use the appropriate "theorem-like" environment as 
% is the following examples.  Never use plain TeX commands for these, as
% they will cause interference with the styles of other papers. 

%\maketitle

%%%%%%%%%%%%%%%%%%%%%%%%%%%%%%%%%%%%%%%%%%%%%%%%%%%%
%%%%%%%%%%%%%%%%%%%%%%%%%%%%%%%%%%%%%%%%%%%%%%%%%%%%

\thispagestyle{empty}

\begin{center}
\resizebox{\linewidth}{!}{
\textbf{\Large Iterated convolutions and endless Riemann surfaces}}
\end{center}

\bigskip

\begin{center}
{\large Shingo \textsc{Kamimoto}\footnote{Graduate School of Sciences,
Hiroshima University. 1-3-1 Kagamiyama, Higashi-Hiroshima,
Hiroshima 739-8526, Japan.} 
and David \textsc{Sauzin}\footnote{IMCCE, CNRS--Observatoire de Paris, France.}}
\end{center}

%\tableofcontents      %optional
\begin{abstract}      %optional
We discuss a version of \'Ecalle's definition of resurgence, based on
the notion of endless continuability in the Borel plane.
We relate this with the notion of $\Om$-continuability, where~$\Om$
% $\Om=(\Om_L)_{L\in\Rp}$ 
%
is a discrete filtered set or a dicrete doubly filtered set, 
and show how to construct a universal Riemann surface~$X_\Om$
whose holomorphic functions are in one-to-one correspondence with $\Om$-continuable functions.
We then discuss the $\Om$-continuability of convolution products and
give estimates for iterated convolutions of the form
$\hat\ph_1*\cdots *\hat\ph_n$.
This allows us to handle nonlinear operations with resurgent series,
e.g. substitution into a convergent power series.
\end{abstract}

\section{Introduction}\label{sec:0}
%%%%%%%%%%%%%%%%%%%%%%%%%%%%%%%%%%%%%%%%%%%%%%%%%%%%
%%%%%%%%%%%%%%%%%%%%%%%%%%%%%%%%%%%%%%%%%%%%%%%%%%%%

In this article, we deal with the following version of \'Ecalle's
definition of resurgence:

%%%%%%%%%%%%%%%%%%%%%%%%%%%%%%%%%%%%%%%%%%%%%%%%%%%%
%
\begin{dfn}   \label{defendlesscont}
  A convergent power series $\hat\ph\in\C\{\ze\}$ is said to be
  \emph{endlessly continuable} if, for every real $L>0$, there exists
  a finite subset~$F_L$ of~$\C$ such that the holomorphic germ at~$0$
  defined by~$\hat\ph$ can be analytically continued along every
  Lipschitz path
  $\ga \col [0,1] \to \C$
of length smaller than~$L$ 
such that $\ga(0)=0$ and $\ga\big( (0,1] \big) \subset \C\setminus F_L$.
We denote by $\hat\gR\subset\C\{\ze\}$ the space of endlessly
continuable functions.
\end{dfn}

%%%%%%%%%%%%%%%%%%%%%%%%%%%%%%%%%%%%%%%%%%%%%%%%%%%%

\begin{dfn}    \label{DefResSer}
A formal series $\ti\ph(z) = \sum_{j=0}^\infty \ph_j z^{-j} \in \C[[z\ii]]$ is said to be
\emph{resurgent} if 
$\hat\ph(\ze) = \sum_{j=1}^{\infty} \ph_j \frac{\ze^{j-1}}{(j-1)!}$
is an endlessly continuable function.
\end{dfn}

In other words, the space of resurgent series is
\[
\ti\gR \defeq \cB\ii(\C\de \oplus \hat\gR) \subset \C[[z\ii]],
\]
where $\cB \col \C[[z\ii]] \to \C\de \oplus \C[[\ze]]$ is the formal Borel transform,
defined % as mapping $\ti\ph(z)$ to $\ph_0\de + \hat\ph(\ze)$ 
by $\cB\ti\ph \defeq \ph_0\de + \hat\ph(\ze)$
in the notation of Definition~\ref{DefResSer}.

We will also treat the more general case of functions which are
``endlessly continuable \bdv'': we will define a space $\hat\gR\dv$ containing $\hat\gR$
and, correspondingly, a space $\ti\gR\dv$ containing $\ti\gR$, but for
the sake of simplicity, in this introduction, we stick to the simpler
situation of Definitions~\ref{defendlesscont} and~\ref{DefResSer}.

%%%%%%%%%%%%%%%%%%%%%%%%%%%%%%%%%%%%%%%%%%%%%%%%%%%%

Note that the radius of convergence of an element of~$\ti\gR$ may
be~$0$.
As for the elements of~$\hat\gR$, we will usually identify a convergent power series and the holomorphic
germ that it defines at the origin of~$\C$, as well as the holomorphic
function which is thus defined near~$0$.
Holomorphic germs with meromorphic or algebraic analytic continuation
are examples of endlessly continuable functions, but the functions
in~$\hat\gR$ can have a multiple-valued analytic continuation with a
rich set of singularities.

The \emph{convolution product} is defined as the Borel image of
multiplication and denoted by the symbol~$*$:
for $\hat\ph,\hat\psi \in \C[[\ze]]$,
$\hat\ph*\hat\psi \defeq \cB(\cB\ii\hat\ph\cdot\cB\ii\hat\psi)$,
and~$\de$ is the convolution unit (obtained from
$(\C[[\ze]],*)$ by adjunction of unit).
As is well known, for convergent power series, convolution admits the
integral representation
\beglab{eqdefconvol}
\hat\ph*\hat\psi(\ze) = \int_0^\ze \hat\ph(\xi)
\hat\psi(\ze-\xi)\,\dd\xi
\edla
for~$\ze$ in the intersection of the discs of convergence of~$\hat\ph$
and~$\hat\psi$.

%%%%%%%%%%%%%%%%%%%%%%%%%%%%%%%%%%%%%%%%%%%%%%%%%%%%

Our aim is to study the analytic continuation of the convolution
product of an arbitrary number of endlessly continuable functions, 
to check its endless continuability, and also to provide
bounds, so as to be able to deal with nonlinear operations on
resurgent series.
A typical example of nonlinear operation is the substitution of one or several series without constant
term $\ti\ph_1,\ldots, \ti\ph_r$ into a power series
$F(w_1,\ldots,w_r)$, defined as
\beglab{eqdefsubstF}
F(\ti\ph_1,\ldots,\ti\ph_r) \defeq \sum_{k\in\N^r} 
c_k \, \ti\ph_1^{k_1} \cdots \ti\ph_r^{k_r}
\edla
for $F = \sum_{k\in\N^r} c_k \, w_1^{k_1} \cdots w_r^{k_r}$.
One of our main results is
%
%%%%%%%%%%%%%%%%%%%%%%%%%%%%%%%%%%%%%%%%%%%%%%%%%%%%
%
\begin{thm}    \label{thmsubstgR}
Let $r\ge1$ be integer.
Then, for any convergent power series $F(w_1,\ldots,w_r) \in \C\{w_1,\ldots,w_r\}$
and for any resurgent series $\ti\ph_1,\ldots,\ti\ph_r$ without
constant term, %one has
$ % \[
F(\ti\ph_1,\ldots,\ti\ph_r) \in \ti\gR.
$ % \]
\end{thm}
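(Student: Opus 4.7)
The plan is to pass to the Borel plane. Since the formal Borel transform $\cB$ is a linear isomorphism intertwining multiplication with convolution, and each $\ti\ph_i$ has no constant term so that $\cB\ti\ph_i=\hat\ph_i\in\C\{\ze\}$, termwise substitution in the defining series~\eqref{eqdefsubstF} yields
\begin{equation*}
\cB\bigl(F(\ti\ph_1,\ldots,\ti\ph_r)\bigr) = c_0\de + \sum_{|k|\ge 1} c_k\, \hat\ph_1^{*k_1}\!*\cdots*\hat\ph_r^{*k_r}.
\end{equation*}
The theorem reduces to checking that, with the $c_0\de$ term removed, the right-hand side represents an element of~$\hat\gR$.

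Fix $L>0$. Using endless continuability of each individual $\hat\ph_i$, I would choose a single finite set $F=F_L\subset\C$ such that every $\hat\ph_i$ admits analytic continuation along every Lipschitz path $\ga\col[0,1]\to\C$ with $\ga(0)=0$, $\ga((0,1])\subset\C\setminus F$, and length $\le L$. A straightforward induction on $n=|k|$, based on the integral representation~\eqref{eqdefconvol} and a parametric deformation of the integration contour along such a path, then shows that each iterated convolution $\hat\ph_{i_1}*\cdots*\hat\ph_{i_n}$ extends holomorphically along every such path, with singularities confined to~$F$.

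The decisive step, and the main obstacle, is the quantitative estimate: one must establish a uniform factorial bound of the form
\begin{equation*}
\bigl|\hat\ph_{i_1}*\cdots*\hat\ph_{i_n}(\ze)\bigr| \le \frac{A\, B^n\, L^{n-1}}{(n-1)!}
\end{equation*}
for $\ze$ at the endpoint of any such path, with constants $A$ and $B$ depending on $L$ and on the maximum over $i$ of an appropriate norm of $\hat\ph_i$ relative to $F$, but independent of $n$ and of the word $(i_1,\ldots,i_n)$. On a common disc of convergence this reduces to the classical estimate in which $|\ze|^{n-1}/(n-1)!$ appears; extending it to paths weaving through a growing singular set, while preserving both the factorial denominator and the word-independence, is a delicate geometric problem — precisely the iterated-convolution bound announced as the main technical result of the paper, and presumably the place where the machinery of the universal Riemann surface $X_\Om$ enters.

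Granted such a bound and using $|c_k|\le M\rho^{-|k|}$ for some $M,\rho>0$ coming from the convergence of $F$, the series $\sum_{|k|\ge 1}c_k\,\hat\ph_1^{*k_1}\!*\cdots*\hat\ph_r^{*k_r}$ is dominated, along any admissible path of length $\le L$, by $MA\sum_{n\ge 1}(rB/\rho)^n L^{n-1}/(n-1)!<\infty$. It therefore converges uniformly and absolutely, producing a holomorphic function that analytically continues along every Lipschitz path of length $\le L$ in $\C\setminus F_L$ starting at~$0$. Since $L$ is arbitrary, endless continuability follows, and hence $F(\ti\ph_1,\ldots,\ti\ph_r)\in\ti\gR$.
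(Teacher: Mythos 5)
There is a genuine gap in your second paragraph, and it is exactly the point the whole paper is built to address. You claim that an induction on $n$ shows that $\hat\ph_{i_1}*\cdots*\hat\ph_{i_n}$ continues along every path of length $\le L$ avoiding the \emph{same} finite set $F_L$ that works for the individual factors, ``with singularities confined to~$F$.'' This is false already for $n=2$: in the integral representation~\eqref{eqdefconvol} the singular points of the factors \emph{add}, so if each $\hat\ph_i$ is singular only at $\om=1$, the product $\hat\ph_i*\hat\ph_j$ acquires a singularity at $\ze=2$ (the two poles of the integrand pinch the contour there), and the $n$-fold product acquires singularities at $1,2,\dots,n$. Consequently both your continuation claim and the domain on which you assert the uniform factorial bound are wrong as stated: the paths must avoid the set of all sums $\om_{j_1}+\cdots+\om_{j_m}$ of singular points of the factors, not the union of their singular sets. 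The reason this can be repaired is not mere finiteness of each $F_L$ but the \emph{length filtration}: since no singular point is reachable within length $\de$ for some $\de>0$, a sum of $m$ singular points is only relevant for paths of length $\ge m\de$, so for fixed $L$ only $m\le L/\de$ summands contribute and the relevant forbidden set stays finite \emph{uniformly in $n$}. This is precisely what the discrete filtered sets, their sums $\Om*\Om'$, and $\Om^{*\infty}$ encode (see Section~\ref{sec:sumsdsf} and Remark~\ref{rem:contrexdsf}, which exhibits a $\Sig$-continuable $\hat\ph$ with $\hat\ph*\hat\ph$ not $\Sig''$-continuable for any closed discrete $\Sig''$ --- showing that the naive ``same singular set'' picture cannot hold).

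Your overall architecture (Borel transform termwise, a uniform $c^nL^{n}/n!$ bound for iterated convolutions, then geometric--over--factorial convergence against $|c_k|\le M\rho^{-|k|}$) does match the paper's: the paper passes through Theorem~\ref{propidgROmgR} to place each $\hat\ph_i$ in some $\hat\gR_{\Om_i}$, proves the iterated-convolution estimate of Theorem~\ref{thm:4.9} on the compacta $K_{\Om}^{\de,L}$ of the universal Riemann surface for $\Om=\Om_1*\cdots*\Om_n$, and sums the series in the Fr\'echet space $\ti\gR_{\Om_0*\Om^{*\infty}}$ (Theorem~\ref{thmsubstOmgR}). But as written, your induction step fails, and fixing it requires replacing $F_L$ by the $L$-truncation of the summed filtered set --- which is the missing idea.
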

The proof of this result requires suitable bounds for the analytic continuation of
the Borel transform of each term in the \rhs\ of~\eqref{eqdefsubstF}.
Along the way, we will study the Riemann surfaces generated by
endlessly continuable functions.
%
% \bcb
%
% We will also treat the more general case of functions which are
% ``endlessly continuable \bdv'': we will define a space $\hat\gR\dv$ containing $\hat\gR$
% and, correspondingly, a space $\ti\gR\dv$ containing $\ti\gR$, for
% which a similar result holds.
%
% \ecb
%
We will also prove similar results for the larger spaces~$\hat\gR\dv$ and~$\ti\gR\dv$.

%%%%%%%%%%%%%%%%%%%%%%%%%%%%%%%%%%%%%%%%%%%%%%%%%%%%
%%%%%%%%%%%%%%%%%%%%%%%%%%%%%%%%%%%%%%%%%%%%%%%%%%%%

\medskip

Resurgence theory was developed in the early 1980s, with \cite{E81}
and \cite{E85}, and has many mathematical applications in the study of
holomorphic dynamical systems, analytic differential equations, WKB
analysis, etc.\ (see the references e.g.\ in \cite{NLresur}).
More recently, there has been a burst of activity on the use of
resurgence in Theoretical Physics, in the context of matrix models,
string theory, quantum field theory and also quantum mechanics---see
e.g.\ 
\cite{AS}, \cite{ASV}, \cite{AU}, \cite{CDU}, \cite{CSSV},
\cite{DU12}, \cite{DU14}, \cite{GGS}, \cite{MM}.
In almost all these applications, it is an important fact that the
space of resurgence series be stable under nonlinear operations:
such stability properties are useful, and at the same time they account for the
occurrence of resurgent series in concrete problems.

%%%%%%%%%%%%%%%%%%%%%%%%%%%%%%%%%%%%%%%%%%%%%%%%%%%%

These stability properties were stated in a very general framework in
\cite{E85}, but without detailed proofs,
and the part of \cite{CNP} which tackles this issue contains
obscurities and at least one mistake.
It is thus our aim in this article to provide a rigorous treatment of
this question, at least in the slightly narrower context of endless
continuability.
%
% \bcb 
%
The definitions of resurgence that we use for~$\ti\gR$ and~$\ti\gR\dv$
are indeed more restrictive than \'Ecalle's most general definition
\cite{E85}.
%
% It is in fact a variant of the one used by Pham et al.\ in \cite{CNP},
% which we recall in Section~\ref{sec:multifilt} below
%
%  (besides, Definition~\ref{defendlesscont} is more or less equivalent to the
In fact, our definition of~$\ti\gR\dv$ is almost identical to the one used by
Pham et al.\ in \cite{CNP}, and our definition of~$\ti\gR$
is essentially equivalent to the definition used in \cite{DO}, but the latter
preprint has flaws which induced us to develop the results of the
present paper.
These versions of the definition of resurgence
%
% \ecb 
%
are sufficient for a
large class of applications, which virtually contains all the
aforementioned ones---see for instance \cite{ShingoK} for the details
concerning the case of nonlinear systems of differential or difference
equations.
The advantage of the definitions based on endless continuability is
that they allow for a description of the location of the singularities
in the Borel plane by means of \emph{discrete filtered sets} 
or \emph{discrete doubly filtered sets} 
(defined in Sections~\ref{secsub:dfs} and~\ref{secddfsgenres});
the notion of discrete (doubly) filtered set, adapted from \cite{CNP} and
\cite{DO}, is flexible enough to allow for a control of the
singularity structure of convolution products.

%%%%%%%%%%%%%%%%%%%%%%%%%%%%%%%%%%%%%%%%%%%%%%%%%%%%
%%%%%%%%%%%%%%%%%%%%%%%%%%%%%%%%%%%%%%%%%%%%%%%%%%%%

\medskip

A more restrictive definition is used \cite{NLresur} and
\cite{SLNM} (see also \cite{E81}): % , which can be phrased as
\begin{dfn}    \label{dfn:Sigcont}
Let $\Sig$ be a closed discrete subset of~$\C$. A convergent power
series~$\hat\ph$ is said to be \emph{$\Sig$-continuable} if it can be
analytically continued along any path which starts in its disc of
convergence and stays in~$\C\setminus\Sig$.
The space of $\Sig$-continuable functions is denoted by~$\hat\gR_\Sig$.
\end{dfn}

This is clearly a particular case of Definition~\ref{defendlesscont}:
any $\Sig$-continuable function is endlessly continuable
%
%\beglab{eqdefOmLrestr}
%
(take $F_L = \{\, \om \in \Sig \mid \abs{\om} \le L \,\}$).
It is proved in \cite{SLNM} that, if $\Sig'$ and~$\Sig''$ are closed
discrete subsets of~$\C$, and if also
$\Sig \defeq \{\om'+\om''\mid \om'\in\Sig',\; \om''\in\Sig''\}$
is closed and discrete, then 
$\hat\ph\in\hat\gR_{\Sig'},\; \hat\psi\in\hat\gR_{\Sig''}\;
\Rightarrow\; \hat\ph*\hat\psi \in \hat\gR_\Sig$.
This is because in formula~\eqref{eqdefconvol}, heuristically, singular
points tend to add to create new singularities;
so, the analytic continuation of $\hat\ph*\hat\psi$ along a path which does not stay close
to the origin is possible provided the path avoids~$\Sig$.
In particular, if a closed discrete set~$\Sig$ is closed under addition, then~$\hat\gR_\Sig$
is closed under convolution; moreover, in this case, bounds for the analytic continuation
of iterated convolutions $\hat\ph_1*\cdots*\hat\ph_n$ are given in \cite{NLresur},
where an analogue of Theorem~\ref{thmsubstgR} is proved for
$\Sig$-continuable functions.

The notion of $\Sig$-continuability is sufficient to cover interesting
applications, e.g.\ differential equations of the saddle-node
singularity type or difference equations like Abel's equation for
one-dimensional tangent-to-identity diffeomorphisms, in which
cases one may take for~$\Sig$ a one-dimensional lattice of~$\C$.
However, reflecting for a moment on the origin of resurgence in
differential equations, one sees that one cannot handle situations
beyond a certain level of complexity without replacing
$\Sig$-continuability by a more general notion like endless
continuability. Let us illustrate this point on two examples.

%%%%%%%%%%%%%%%%%%%%%%%%%%%%%%%%%%%%%%%%%%%%%%%%%%%%

\begin{enumerate}[--]

\item
The equation 
$\dfrac{\dd\ph}{\dd z} - \la\ph = b(z)$,
where $b(z)$ is given in $z\ii\C\{z\ii\}$ and $\la\in\C^*$,
has a unique formal solution in $\C[[z\ii]]$, namely
$\ti\ph(z) \defeq -\la\ii\Big(\ID-\la\ii\dfrac{\dd\;}{\dd z}\Big)\ii b$,
whose Borel transform is
$\hat\ph(\ze) = -(\la+\ze)\ii \hat b(\ze)$;
here, the Borel transform~$\hat b(\ze)$ of~$b(z)$ is entire, hence
$\hat\ph$ is meromorphic in~$\C$, with at worse a pole at $\ze=-\la$ and no
singularity elsewhere.
Therefore, heuristically, for a nonlinear equation
\[
\dfrac{\dd\ph}{\dd z} - \la\ph = b_0(z) + b_1(z)\ph
+b_2(z)\ph^2 + \cdots
\]
with $b(z,w) = \sum b_m(z)w^m \in z\ii\C\{z\ii,w\}$ given,
we may expect a formal solution whose Borel transform~$\hat\ph$ has
singularities at $\ze = - n\la$, $n\in\Z_{>0}$ (because, as an effect
of the nonlinearity, the singular points tend to add),
\ie $\hat\ph$ will be $\Sig$-continuable
with $\Sig = \{-\la,-2\la,\ldots\}$ (see \cite{mouldSN} for a rigorous
proof of this),
but in the multidimensional case, for a system of~$r$ coupled
equations with \lhs s of the form 
$\dfrac{\dd\ph_j}{\dd z} - \la_j\ph_j$ with
$\la_1,\ldots,\la_r\in\C^*$,
we may expect that the Borel transforms~$\hat\ph_j$ of the components of the
formal solution have singularities at the points
$\ze = - (n_1\la_1+\cdots+n_r\la_r)$, $n\in\Z_{>0}^{\,r}$;
this set of possible singular points may fail to be closed and
discrete (depending on the arithmetical properties of
$(\la_1,\ldots,\la_r)$),
hence, in general, we cannot expect these Borel transforms to be
$\Sig$-continuable for any~$\Sig$.
Still, this does not prevent them from being always endlessly
continuable, as proved in \cite{ShingoK}.

%%%%%%%%%%%%%%%%%%%%%%%%%%%%%%%%%%%%%%%%%%%%%%%%%%%%

\item
Another illustration of the need to go beyond $\Sig$-continuability
stems from parametric resurgence \cite{EcaCinq}.
Suppose that we are given a holomorphic function $b(t)$ globally
defined on~$\C$, with isolated singularities $\om\in S\subset\C$,
e.g.\ a meromorphic function, and consider the differential equation
\beglab{eqresparam}
\frac{\dd\ph}{\dd t} - z \la \ph = b(t),
\edla
where $\la\in\C^*$ is fixed and~$z$ is a large complex parameter \wrt\
which we consider perturbative expansions.
It is easy to see that there is a unique solution which is formal
in~$z$ and analytic in~$t$, namely
$\ti\ph(z,t) \defeq -\sum_{k=0}^\infty \la^{-k-1} z^{-k-1} b^{(k)}(t)$,
and its Borel transform $\hat\ph(\ze,t) = - \la\ii b(t+\la\ii\ze)$
is singular at all points of the form
$\ze_{t,\om} \defeq \la(-t+\om)$, $\om\in S$.
Now, if we add to the \rhs\ of~\eqref{eqresparam} a perturbation which
is nonlinear in~$\ph$, we can expect to get a formal solution whose
Borel transform possesses a rich set of singular points generated by
the $\ze_{t,\om}$'s, which might easily be too rich to allow for
$\Sig$-continuability with any~$\Sig$; however, we can still hope
endless continuability.

\end{enumerate}

%%%%%%%%%%%%%%%%%%%%%%%%%%%%%%%%%%%%%%%%%%%%%%%%%%%%

\medskip

These are good motivations to study endless continuable functions.
As already alluded to, we will use discrete filtered sets (\dfs\ for short) to work with
them.
A \dfs\ is a family of sets $\Om=(\Om_L)_{L\in\Rp}$, where
each~$\Om_L$ is a finite set;
we will define $\Om$-continuability when~$\Om$ is a \dfs, thus
extending Definition~\ref{dfn:Sigcont}, and the space of endlessly
continuable functions will appear as the totality of $\Om$-continuable
functions for all possible \dfs\ 
This was already the approach of \cite{CNP}, and it was used in
\cite{DO} to prove that the convolution product of two
endlessly continuable functions is endlessly continuable, hence $\ti\gR$
is a subring of $\C[[z\ii]]$.
However, to reach the conclusions of Theorem~\ref{thmsubstgR}, 
we will need to give 
precise estimates on the convolution product of an arbitrary number of
endlessly continuable functions,
so as to prove the convergence of the series of holomorphic functions 
$\sum c_k \, \hat\ph_1^{*k_1} * \cdots * \hat\ph_r^{*k_r}$
(Borel transform of the \rhs\ of~\eqref{eqdefsubstF}) and to check its
endless continuability.
We will proceed similarly in the case of endless continuability \bdv,
using discrete doubly filtered sets.

Notice that explicit bounds for iterated
convolutions can be useful in themselves;
in the context of $\Sig$-continuability, such bounds were obtained in
\cite{NLresur} and they were used in \cite{KKK} in a study in WKB
analysis, where the authors track the analytic dependence upon
parameters in the exponential of the Voros coefficient.

As another contribution to the study of endlessly continuable
functions, we will show how to contruct, for each discrete filtered
set~$\Om$, a universal Riemann surface~$X_\Om$ whose holomorphic
functions are in one-to-one correspondence with $\Om$-continuable
functions.

%%%%%%%%%%%%%%%%%%%%%%%%%%%%%%%%%%%%%%%%%%%%%%%%%%%%

\medskip

The plan of the paper is as follows.
\begin{enumerate}[--]
\item
Section~\ref{sec:dfs} introduces discrete filtered sets, the corresponding $\Om$-continuable
functions and their Borel images, the $\Om$-resurgent series, and
discusses their relation with Definitions~\ref{defendlesscont} and~\ref{DefResSer}.
The case of discrete doubly filtered sets and the spaces $\hat\gR\dv$
and~$\ti\gR\dv$ is in Section~\ref{secddfsgenres}.

\item
Section~\ref{sec:3} discusses the notion of $\Om$-endless Riemann
surface and shows how to construct a universal object~$X_\Om$ (Theorem~\ref{lemuniversalXOm}).

\item
In Section~\ref{sec:4}, we state and prove Theorem~\ref{thm:4.9}
which gives precise estimates for the convolution product of an
arbitrary number of endlessly continuable functions.
We also show how the analogous statement for functions which are
endlessly continuable \bdv.

\item
Section~\ref{sec:5} is devoted to applications of
Theorem~\ref{thm:4.9}:
the proof of Theorem~\ref{thmsubstgR} and even of a more general
and more precise version, Theorem~\ref{thmsubstOmgR},
and an implicit resurgent function theorem, Theorem~\ref{thm:IRFT}.

\end{enumerate}

%%%%%%%%%%%%%%%%%%%%%%%%%%%%%%%%%%%%%%%%%%%%%%%%%%%%

Some of the results presented here have been announced in \cite{KS}.

%%%%%%%%%%%%%%%%%%%%%%%%%%%%%%%%%%%%%%%%%%%%%%%%%%%%
%%%%%%%%%%%%%%%%%%%%%%%%%%%%%%%%%%%%%%%%%%%%%%%%%%%%

% \newpage

%\setcounter{section}{-1}
\section{Discrete filtered sets and $\Om$-continuability}\label{sec:dfs}
%%%%%%%%%%%%%%%%%%%%%%%%%%%%%%%%%%%%%%%%%%%%%%%%%%%%
%%%%%%%%%%%%%%%%%%%%%%%%%%%%%%%%%%%%%%%%%%%%%%%%%%%%

In this section, we review the notions concerning discrete filtered
sets (usually denoted by the letter~$\Om$), the corresponding
$\Om$-allowed paths and $\Om$-continuable functions.
The relation with endless continuability is established, and sums of
discrete filtered sets are defined in order to handle convolution of
enlessly continuable functions.

%%%%%%%%%%%%%%%%%%%%%%%%%%%%%%%%%%%%%%%%%%%%%%%%%%%%
\subsection{Discrete filtered sets} \label{secsub:dfs}
%%%%%%%%%%%%%%%%%%%%%%%%%%%%%%%%%%%%%%%%%%%%%%%%%%%%

We first introduce the notion of discrete filtered sets which will be
used to describe singularity structure of endlessly continuable
functions
(the first part of the definition is adapted from \cite{CNP} and
\cite{DO}):

%%%%%%%%%%%%%%%%%%%%%%%%%%%%%%%%%%%%%%%%%%%%%%%%%%%%
%
\begin{dfn}\label{dfn:2.1}
We use the notation
$\Rp = \{\la\in\R \mid \la\geq0\}$.
\begin{enumerate}[1)]
\item A \emph{discrete filtered set}, or \emph{\dfs}\ for short, is a
  family $\Om = (\Om_L)_{L\in\Rp}$ where
\begin{enumerate}[i)]
\item
$\Om_L$ is a finite subset of~$\C$ for each~$L$, 
\item
$\Om_{L_1}\subseteq \Om_{L_2}$ for $L_1\leq L_2$,
\item
there exists $\de>0$ such that $\Om_\de=\O$.
\end{enumerate}

\item
Let $\Om$ and $\Om'$ be \dfs\ 
We write $\Om\subset\Om'$
if $\Om_L\subset\Om'_L$ for every~$L$.

\item
We call \emph{upper closure} of a \dfs~$\Om$ the family of sets
$\ti\Om = (\ti\Om)_{L\in\Rp}$ defined by
\beglab{eq:deftiOmL}
\ti\Om_L \defeq \bigcap_{\eps>0} \Om_{L+\eps}
\quad\text{for $L\in\Rp$.}
\edla
It is easy to check that $\ti\Om$ is a \dfs\ and
$\Om \subset \ti\Om$.
\end{enumerate}
\end{dfn}
%%%%%%%%%%%%%%%%%%%%%%%%%%%%%%%%%%%%%%%%%%%%%%%%%%%%

\begin{exa}   \label{exacloseddiscOm}
Given a closed discrete subset~$\Sig$ of~$\C$, the formula
\[
\Om(\Sig)_L \defeq \{\, \om \in \Sig \mid \abs{\om} \le L \,\}
\quad\text{for $L\in\Rp$}
\]
defines a \dfs\ $\Om(\Sig)$ which coincides with its upper closure.
\end{exa}

From the definition of \dfs,
we find the following
%%%%%%%%%%%%%%%%%%%%%%%%%%%%%%%%%%%%%%%%%%%%%%%%%%%%
\begin{lmm}   \label{lemstructOm}
For any \dfs~$\Om$, there exists a real sequence
$(L_n)_{n\ge0}$ such that
$0=L_0 < L_1 < L_2 < \cdots$
and, for every integer $n\ge0$,
\[
L_n < L < L_{n+1} 
\quad \Rightarrow \quad
\ti\Om_{L_n} = \ti\Om_L = \Om_L.
\]
\end{lmm}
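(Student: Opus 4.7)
The plan is to identify the ``jump levels'' of the family $(\Om_L)_{L\in\Rp}$ by tracking the \emph{entry level} of each individual point. For each $\om\in\bigcup_{L\in\Rp}\Om_L$, set
\[
a_\om \defeq \inf\{\, L\in\Rp \mid \om\in\Om_L \,\}.
\]
By the monotonicity axiom (ii), $\om\in\Om_L$ for every $L>a_\om$, and by axiom (iii), $a_\om\ge\de>0$. The first key observation is local finiteness: for any $M\ge 0$, if $a_\om\le M$ then $\om\in\Om_{M+1}$, so $\{\om:a_\om\le M\}\subseteq \Om_{M+1}$ is finite. Hence the set $A \defeq \{0\}\cup\{\,a_\om\mid \om\in\bigcup_L\Om_L\,\}$ intersects every bounded interval in a finite set.

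Next, I would enumerate $A$ in strictly increasing order $0=L_0<L_1<L_2<\cdots$. Two cases arise: if $A$ is infinite, local finiteness forces $L_n\to\infty$ and we are done; if $A$ is finite (say $A=\{L_0,\dots,L_N\}$), then no entry level exceeds $L_N$, so $\Om_L$ is constant for $L>L_N$, and I would extend the sequence arbitrarily by setting $L_{N+k}\defeq L_N+k$ for $k\ge 1$. In either case the properties to verify on each interval $(L_n,L_{n+1})$ are the same.

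The main technical step is the constancy claim: for $L_n<L'<L''<L_{n+1}$ we have $\Om_{L'}=\Om_{L''}$. Any $\om\in\Om_{L''}\setminus\Om_{L'}$ would satisfy $a_\om\in[L',L'']\subset(L_n,L_{n+1})$, but $(L_n,L_{n+1})\cap A=\O$ by construction, giving a contradiction. Thus $\Om_L$ takes a constant value $C_n$ on the open interval $(L_n,L_{n+1})$.

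Finally, I would read off $\ti\Om_{L_n}=\ti\Om_L=\Om_L$ for $L\in(L_n,L_{n+1})$. Since for all sufficiently small $\eps>0$ one has $L_n+\eps\in(L_n,L_{n+1})$, the definition \eqref{eq:deftiOmL} yields $\ti\Om_{L_n}=\bigcap_{\eps>0}\Om_{L_n+\eps}=C_n$. For $L\in(L_n,L_{n+1})$, the inclusion $\Om_L\subseteq\ti\Om_L$ is automatic, while the reverse inclusion follows because $\Om_{L+\eps}=C_n=\Om_L$ for $\eps$ small enough that $L+\eps<L_{n+1}$. The only delicate point in the whole argument is the local finiteness of $A$, which ultimately rests on combining monotonicity with the finiteness of each individual $\Om_L$; once that is in hand the rest is bookkeeping.
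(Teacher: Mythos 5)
Your proof is correct, and it takes a genuinely different route from the paper's. You track, for each point $\om$, its \emph{entry level} $a_\om=\inf\{L\mid\om\in\Om_L\}$, show that the set $A$ of entry levels is locally finite (via $\{\om\mid a_\om\le M\}\subseteq\Om_{M+1}$), and take the $L_n$ to be the elements of $A$ (padded arbitrarily if $A$ is finite); constancy of $\Om_L$ on each gap $(L_n,L_{n+1})$ and the identities for $\ti\Om$ then follow by inspection of the definition~\eqref{eq:deftiOmL}. The paper instead works with the counting function $\cN(L)\defeq\card\ti\Om_L$: it is weakly increasing and integer-valued, so $\Rp$ decomposes into the level sets $\cN\ii(k)$, which are intervals, and the right-continuity-from-above identity $\ti\Om_L=\bigcap_{\eps>0}\ti\Om_{L+\eps}$ forces each such interval to be left-closed and right-open; the $L_n$ are their left endpoints. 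The two arguments are of comparable length. Yours is more explicit and constructive, identifying the jump points concretely as entry levels, and it handles the ``eventually constant'' case cleanly; the paper's is slightly slicker at the endpoints, since working with $\ti\Om$ throughout lets the upper-closure identity do all the work there, but it is terser about producing an infinite strictly increasing sequence when only finitely many level sets are non-empty. The only implicit step in your constancy claim is the inclusion $\Om_{L'}\subseteq\Om_{L''}$, which is immediate from monotonicity, so there is no gap.
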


%%%%%%%%%%%%%%%%%%%%%%%%%%%%%%%%%%%%%%%%%%%%%%%%%%%%

\begin{proof}
First note that~\eqref{eq:deftiOmL} entails
\beglab{eq:tiOmLtiOmLeps}
\ti\Om_L \defeq \bigcap_{\eps>0} \ti\Om_{L+\eps}
\quad\text{for every $L\in\Rp$}
\edla
(because $\Om_{L+\eps} \subset \ti\Om_{L+\eps} \subset \ti\Om_{L+2\eps}$).
Consider the weakly order-preserving integer-valued function
$L\in\Rp \mapsto \cN(L) \defeq \card\ti\Om_L$.
For each~$L$ the sequence $k\mapsto\cN(L+\frac{1}{k})$ must be
eventually constant, hence there exists $\eps_L>0$ such that,
for all $L' \in (L,L+\eps_L]$, $\cN(L') = \cN(L+\eps_L)$, whence
$\ti\Om_{L'}=\ti\Om_{L+\eps_L}$, and in fact, by~\eqref{eq:tiOmLtiOmLeps},
this holds also for $L'=L$.
The conclusion follows from the fact that
% $\cN$ is upper semi-continuous
%
$\dst\Rp = \bigsqcup_{k\in\Z} \cN\ii(k)$
and each non-empty $\cN\ii(k)$ is convex, hence an interval, which
by the above must be left-closed and right-open,
hence of the form $[L,L')$ or $[L,\infty)$.
\end{proof}

%%%%%%%%%%%%%%%%%%%%%%%%%%%%%%%%%%%%%%%%%%%%%%%%%%%%

Given a \dfs~$\Om$, we set
\beglab{eq:defcSOm}
\cS_\Om \defeq \big\{ (\la,\om)\in\R\times\C \mid
\la\ge0 \;\text{and}\; 
\om\in\Om_\la \big\}
\edla
and denote by~$\ov\cS_\Om$ the closure of $\cS_\Om$ in $\R\times\C$. 
We then call
\beglab{eq:defcMOm}
\cM_\Om \defeq \big(\R\times\C\big) \setminus \ov\cS_\Om
\quad \text{(open subset of $\R\times\C$)}
\edla
the \emph{allowed open set} associated with~$\Om$.
%

%%%%%%%%%%%%%%%%%%%%%%%%%%%%%%%%%%%%%%%%%%%%%%%%%%%%

\begin{lmm}   \label{lemclosOm}
One has
$ % \beglab{eqovcSOmcStiOm}
\ov\cS_\Om = \cS_{\ti\Om}
$ and $ %\quad\text{and}\quad
\cM_\Om = \cM_{\ti\Om}.
$ % \edla
\end{lmm}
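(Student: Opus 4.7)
The plan is to prove the main identity $\ov\cS_\Om = \cS_{\ti\Om}$ by a double inclusion, and then derive the second identity for free by taking complements. The second statement will follow immediately once we know $\cS_{\ti\Om}$ is closed: then
\[
\cM_\Om = (\R\times\C)\setminus\ov\cS_\Om = (\R\times\C)\setminus\cS_{\ti\Om} = (\R\times\C)\setminus\ov\cS_{\ti\Om} = \cM_{\ti\Om}.
\]

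For the inclusion $\ov\cS_\Om \subset \cS_{\ti\Om}$ I would first observe that $\Om\subset\ti\Om$ (already noted in Definition~\ref{dfn:2.1}) yields $\cS_\Om\subset\cS_{\ti\Om}$, so it suffices to check that $\cS_{\ti\Om}$ is closed in $\R\times\C$. Take a sequence $(\la_n,\om_n)\in\cS_{\ti\Om}$ converging to $(\la,\om)$. Clearly $\la\ge0$. For any $\eps>0$, eventually $\la_n\le\la+\eps$, so by the monotonicity of~$\ti\Om$ (which follows from that of~$\Om$) we have $\om_n\in\ti\Om_{\la_n}\subset\ti\Om_{\la+\eps}$. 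But $\ti\Om_{\la+\eps}$ is finite --- indeed $\ti\Om_{\la+\eps}\subset\Om_{\la+2\eps}$ --- so the convergent sequence $\om_n$ must have its limit~$\om$ in that finite set. Thus $\om\in\ti\Om_{\la+\eps}$ for every $\eps>0$, which by~\eqref{eq:tiOmLtiOmLeps} gives $\om\in\ti\Om_\la$, so $(\la,\om)\in\cS_{\ti\Om}$.

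For the reverse inclusion $\cS_{\ti\Om}\subset\ov\cS_\Om$, I would approach each point of $\cS_{\ti\Om}$ from above along the $\la$-axis: if $(\la,\om)\in\cS_{\ti\Om}$, then $\om\in\ti\Om_\la=\bigcap_{\eps>0}\Om_{\la+\eps}$, so in particular $\om\in\Om_{\la+1/n}$ for every $n\ge1$. Therefore $(\la+1/n,\om)\in\cS_\Om$, and this sequence converges to $(\la,\om)$, witnessing $(\la,\om)\in\ov\cS_\Om$.

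There is no real obstacle here; the only subtlety is to remember to invoke the finiteness of $\ti\Om_{\la+\eps}$ when passing to the limit of $\om_n$, and to use the self-reproducing property~\eqref{eq:tiOmLtiOmLeps} of the upper closure to conclude membership in $\ti\Om_\la$ itself. Both ingredients are already established earlier in the section.
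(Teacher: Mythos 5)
Your proposal is correct and follows essentially the same route as the paper: the inclusion $\cS_{\ti\Om}\subset\ov\cS_\Om$ is proved identically via the approximating points $(\la+1/n,\om)$, and the reverse inclusion rests on the same sequence argument using the finiteness (hence closedness) of the sets $\Om_{\la+\eps}$; the paper merely phrases it as a direct limit computation in $\cS_\Om$ rather than packaging it as ``$\cS_{\ti\Om}$ is closed and contains $\cS_\Om$''. Both versions are fine, and your appeal to~\eqref{eq:tiOmLtiOmLeps} in the last step is legitimate since it is established at the start of the proof of Lemma~\ref{lemstructOm}.
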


\begin{proof}
Suppose $(\la,\om) \in \cS_{\ti\Om}$. Then $\om\in\Om_{\la+1/k}$ for
each $k\ge1$, hence $(\la+\frac{1}{k},\om) \in \cS_\Om$, whence $(\la,\om)\in\ov\cS_\Om$.

Suppose $(\la,\om)\in\ov\cS_\Om$. Then there exists a sequence
$(\la_k,\om_k)_{k\ge1}$ in~$\cS_\Om$ which converges to $(\la,\om)$.
If $\eps>0$, then $\la_k\le \la+\eps$ for~$k$ large enough, hence
$\om_k\in\Om_{\la+\eps}$, whence $\om\in\Om_{\la+\eps}$ (because a
finite set is closed); therefore $(\la,\om) \in \cS_{\ti\Om}$.

Therefore, $\cS_{\ti\Om} = \ov\cS_\Om = \ov\cS_{\ti\Om}$ and $\cM_{\ti\Om}=\cM_\Om$.
\end{proof}

%%%%%%%%%%%%%%%%%%%%%%%%%%%%%%%%%%%%%%%%%%%%%%%%%%%%
\subsection{$\Om$-allowed paths} \label{secsub:Omallpaths}
%%%%%%%%%%%%%%%%%%%%%%%%%%%%%%%%%%%%%%%%%%%%%%%%%%%%

When dealing with a Lipschitz path $\ga\col [a,b] \to \C$, we denote
by~$L(\ga)$ its length.

We denote by~$\Pi$ the set of all Lipschitz paths
$\ga \col [0,t_*] \to \C$ such that $\ga(0)=0$,
with some real $t_*\ge0$ depending on~$\ga$.
Given such a $\ga\in\Pi$ and $t\in[0,t_*]$, we denote by
\[
\ga_{|t} \defeq \restr{\ga}{[0,t]} \in \Pi
\]
 the restriction of~$\ga$ to the interval $[0,t]$.
%
%\bcb
%
Notice that $L(\ga_{|t})$ is also Lipschitz continuous on $[0,t_*]$
since $\ga'$ exists a.e.\ and is essentially bounded by Rademacher's theorem.
%
%\ecb

%%%%%%%%%%%%%%%%%%%%%%%%%%%%%%%%%%%%%%%%%%%%%%%%%%%%
%%%%%%%%%%%%%%%%%%%%%%%%%%%%%%%%%%%%%%%%%%%%%%%%%%%%
%
\begin{dfn}   \label{defOmallowedpaths}
Given a \dfs~$\Om$,
we call \emph{$\Om$-allowed path} any $\ga\in\Pi$ such that
\[
\ti\ga(t) \defeq \big( L(\ga_{|t}), \ga(t) \big) \in \cM_\Om
\ens\text{for all $t$.}
\]
We denote by $\Pi_\Om$ the set of all $\Om$-allowed paths.
\end{dfn}

%%%%%%%%%%%%%%%%%%%%%%%%%%%%%%%%%%%%%%%%%%%%%%%%%%%%

Notice that, given $t_*\ge0$,
\begin{eq-text}   \label{eqtextcharacOmalltiga}
if $t \in [0,t_*] \mapsto 
\ti\ga(t) = \big( \la(t),\ga(t) \big) \in \cM_\Om$
is a piecewise $C^1$ path such that $\ti\ga(0)=(0,0)$ and 
$\la'(t) = \abs{\ga'(t)}$ for a.e.~$t$, then $\ga\in\Pi_\Om$.
\end{eq-text}

%%%%%%%%%%%%%%%%%%%%%%%%%%%%%%%%%%%%%%%%%%%%%%%%%%%%

In view of Lemmas~\ref{lemstructOm} and~\ref{lemclosOm},
we have the following characterization
of $\Om$-allowed paths:

\begin{lmm}   \label{lemOmallowedness}
Let~$\Om$ be a \dfs\ 
Then $\Pi_\Om=\Pi_{\ti\Om}$ and,
given $\ga\in\Pi$,
the followings are equivalent:
\begin{enumerate}[1)]
\item
$\ga \in \Pi_\Om$,
\item
$\ga(t) \in \C \setminus \ti\Om_{L(\ga_{|t})}$
for every~$t$,
\item
for every~$t$, there exists~$n$ such that
$L(\ga_{|t}) < L_{n+1}$
and
$\ga(t) \in \C \setminus \ti\Om_{L_n}$
\end{enumerate}
(using the notation of Lemma~\ref{lemstructOm}).
\end{lmm}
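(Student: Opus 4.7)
The plan is to handle the three assertions separately, using Lemmas~\ref{lemclosOm} and~\ref{lemstructOm} as the only nontrivial input. First I would dispatch $\Pi_\Om = \Pi_{\ti\Om}$ immediately: by Definition~\ref{defOmallowedpaths}, membership in~$\Pi_\Om$ depends on~$\Om$ only through the allowed open set~$\cM_\Om$, and Lemma~\ref{lemclosOm} already gives $\cM_\Om = \cM_{\ti\Om}$, so there is nothing further to check.

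For the equivalence 1)$\Leftrightarrow$2), I would just unfold definitions: again by Lemma~\ref{lemclosOm} one has $\cM_\Om = (\R\times\C)\setminus\cS_{\ti\Om}$, so $\ga\in\Pi_\Om$ means $(L(\ga_{|t}),\ga(t))\notin\cS_{\ti\Om}$ for every~$t$; since $L(\ga_{|t})\ge 0$, the description~\eqref{eq:defcSOm} applied to~$\ti\Om$ turns this into the statement $\ga(t)\in\C\setminus\ti\Om_{L(\ga_{|t})}$ of~2).

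The substance of the argument is the equivalence 2)$\Leftrightarrow$3), and I would exploit the piecewise constancy provided by Lemma~\ref{lemstructOm}, which yields $\ti\Om_L=\ti\Om_{L_n}$ whenever $L_n\le L<L_{n+1}$. For 2)$\Rightarrow$3), given~$t$ I would pick the unique index~$n$ with $L(\ga_{|t})\in[L_n,L_{n+1})$ and read off~3) directly. The converse 3)$\Rightarrow$2) requires a small case split: if the index~$n$ supplied by~3) satisfies $L(\ga_{|t})\ge L_n$, the same piecewise-constancy identifies $\ti\Om_{L(\ga_{|t})}$ with $\ti\Om_{L_n}$; if instead $L(\ga_{|t})<L_n$, the monotonicity of~$\ti\Om$ (which is itself a \dfs, being the upper closure of a \dfs) forces $\ti\Om_{L(\ga_{|t})}\subset\ti\Om_{L_n}$, so $\ga(t)\notin\ti\Om_{L_n}$ still entails $\ga(t)\notin\ti\Om_{L(\ga_{|t})}$.

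The argument is essentially bookkeeping and I do not anticipate a genuine obstacle; the one place that demands a touch of care is the $L(\ga_{|t})<L_n$ subcase of 3)$\Rightarrow$2), where one must remember to invoke the filtration property of~$\ti\Om$ rather than the exact equality that holds on $[L_n,L_{n+1})$.
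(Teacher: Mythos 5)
Your argument is correct and follows exactly the route the paper intends: the paper's own proof is simply ``Obvious,'' with the statement prefaced by ``In view of Lemmas~\ref{lemstructOm} and~\ref{lemclosOm},'' and your write-up is precisely the unfolding of definitions via those two lemmas. The one point you flag as needing care (the $L(\ga_{|t})<L_n$ subcase, handled by the filtration property of~$\ti\Om$) is handled correctly.
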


\begin{proof}
Obvious.
\end{proof}

%%%%%%%%%%%%%%%%%%%%%%%%%%%%%%%%%%%%%%%%%%%%%%%%%%%%%
\begin{nota}   \label{nota:cMdeL}
For $L,\de>0$, we set
\begin{gather}
\label{eq:defcMdeL}
\cM_\Om^{\de,L} \defeq \big\{\,
(\la,\ze) \in \R\times\C \mid
\dist\big( (\la,\ze), \cS_\Om \big) \ge \de
\;\text{and}\; \la\leq L
\big\}, \\[1ex]
\label{eq:defPideL}
\Pi_\Om^{\de,L} \defeq \big\{\, \ga\in\Pi_\Om \mid
\big(L(\ga_{|t}),\ga(t)\big)\in\cM_\Om^{\de,L}
\;\, \text{for all~$t$} \,\big\},
\end{gather}
where 
$\dist(\cdot\,,\cdot)$ is the Euclidean distance in 
$\R\times\C\simeq \R^3$.
\end{nota}
%
%%%%%%%%%%%%%%%%%%%%%%%%%%%%%%%%%%%%%%%%%%%%%%%%%%%%
Note that 
% $\cM_\Om$ (resp., $\Pi_\Om$) is exhausted by
% $\big\{\cM_\Om^{\de,L}\big\}_{\de,L>0}$ (resp.,
% $\big\{\Pi_\Om^{\de,L}\big\}_{\de,L>0}$), i.e.,
$$
\cM_\Om=
\bigcup_{\de,L>0}
\cM_\Om^{\de,L},
\qquad
\Pi_\Om=
\bigcup_{\de,L>0}
\Pi_\Om^{\de,L}.
$$

%%%%%%%%%%%%%%%%%%%%%%%%%%%%%%%%%%%%%%%%%%%%%%%%%%%%

\subsection{$\Om$-continuable functions and $\Om$-resurgent
  series}

%%%%%%%%%%%%%%%%%%%%%%%%%%%%%%%%%%%%%%%%%%%%%%%%%%%%

\begin{dfn}   \label{defOmcontOmres}
Given a \dfs~$\Om$, we call \emph{$\Om$-continuable function} a
holomorphic germ $\hat\ph \in \C\{\ze\}$ which can be analytically
continued along any path $\ga\in\Pi_{\Om}$.
We denote by~$\hat\gR_\Om$ the set of all $\Om$-continuable functions
and define
\[
\ti\gR_\Om \defeq \cB\ii \big( \C\de \oplus \hat\gR_\Om \big) \subset \C[[z\ii]]
\]
to be the set of \emph{$\Om$-resurgent series}.
\end{dfn}
%
%%%%%%%%%%%%%%%%%%%%%%%%%%%%%%%%%%%%%%%%%%%%%%%%%%%%

\begin{rem}
Given a closed discrete subset~$\Sig$ of~$\C$,
the $\Sig$-continuability in the sense of Definition~\ref{dfn:Sigcont}
is equivalent to the $\Om(\Sig)$-continuability
in the sense of Definition~\ref{defOmcontOmres}
for the \dfs\ $\Om(\Sig)$ of Example \ref{exacloseddiscOm}.
\end{rem}

%%%%%%%%%%%%%%%%%%%%%%%%%%%%%%%%%%%%%%%%%%%%%%%%%%%%

\begin{rem}    \label{reminclusdfs}
Observe that % , given \dfs~$\Om$ and~$\Om'$,
$\Om\subset\Om'$ implies $\cS_\Om\subset\cS_{\Om'}$, hence
$\cM_{\Om'} \subset \cM_\Om$ and $\Pi_{\Om'} \subset \Pi_\Om$,
therefore
\[
\Om \subset \Om' \quad\Rightarrow\quad
\hat\gR_\Om \subset \hat\gR_{\Om'}.
\]
%
% Indeed, $\Om\subset\Om'$ implies $\cS_\Om\subset\cS_{\Om'}$, hence
% %
% $\cM_{\Om'} \subset \cM_\Om$ and $\Pi_{\Om'} \subset \Pi_\Om$.
%
\end{rem}

%%%%%%%%%%%%%%%%%%%%%%%%%%%%%%%%%%%%%%%%%%%%%%%%%%%%

%%%%%%%%%%%%%%%%%%%%%%%%%%%%%%%%%%%%%%%%%%%%%%%%%%%%

\begin{rem}
  Notice that, for the trivial \dfs\ $\Om=\O$,
  $\hat\gR_{\O}=\gO(\C)$, hence $\gO(\C) \subset \hat\gR_\Om$ for
  every \dfs~$\Om$, \ie entire functions are always $\Om$-continuable.
Consequently, convergent series are always $\Om$-resurgent:
$\C\{z\ii\} \subset \ti\gR_\Om$.
However, $ \hat\gR_{\Om}=\gO(\C) $ does not imply $\Om=\O$
(consider for instance the \dfs~$\Om$ defined by $\Om_L=\O$ for
$0\le L<2$ and $\Om_L = \{1\}$ for $L\ge2$).
In fact, one can show
\[
\hat\gR_\Om = \gO(\C)
\quad \Leftrightarrow \quad
\forall L>0,\; \exists L'>L\; \text{such that} \;
\Om_{L'} \subset \{\, \om\in\C \mid \abs{\om} < L \,\}.
\]
\end{rem}

%%%%%%%%%%%%%%%%%%%%%%%%%%%%%%%%%%%%%%%%%%%%%%%%%%%%

\begin{rem}   \label{rem:wlogupclos}
In view of Lemma~\ref{lemOmallowedness}, we have
$\hat\gR_{\Om}=\hat\gR_{\ti\Om}$.
Therefore, when dealing with $\Om$-resurgence, we can always suppose
that~$\Om$ coincides with its upper closure (by replacing~$\Om$ with~$\ti\Om$).
\end{rem}

%%%%%%%%%%%%%%%%%%%%%%%%%%%%%%%%%%%%%%%%%%%%%%%%%%%%

We now show the relation between
resurgence in the sense of Definition~\ref{DefResSer}
and $\Om$-resurgence in the sense of Definition~\ref{defOmcontOmres}.

%%%%%%%%%%%%%%%%%%%%%%%%%%%%%%%%%%%%%%%%%%%%%%%%%%%%

\begin{thm}   \label{propidgROmgR}
A formal series $\ti\ph\in\C[[z\ii]]$ is resurgent if and only if there
exists a \dfs~$\Om$ such that $\ti\ph$ is $\Om$-resurgent.
In other words,
\beglab{eqidentitygROmgR}
\hat\gR = \bigcup_{\Om\;\text{\dfs}} \hat\gR_\Om,
\qquad
\ti\gR = \bigcup_{\Om\;\text{\dfs}} \ti\gR_\Om.
\edla
\end{thm}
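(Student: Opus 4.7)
My plan is to prove the identity $\hat\gR = \bigcup_\Om \hat\gR_\Om$ in~\eqref{eqidentitygROmgR}; the analogous identity for~$\ti\gR$ follows at once, since $\cB\col\C[[z\ii]]\isom\C\de\oplus\C[[\ze]]$ is a bijection and both $\ti\gR$ and $\ti\gR_\Om$ are defined by pulling $\hat\gR$ and $\hat\gR_\Om$ back through $\cB\ii$ (after adjoining the unit $\C\de$). For the inclusion $\bigcup_\Om\hat\gR_\Om\subset\hat\gR$, given a \dfs~$\Om$ and $\hat\ph\in\hat\gR_\Om$, I will take $F_L:=\ti\Om_L$, which is finite by Definition~\ref{dfn:2.1}, and show that any Lipschitz path~$\ga$ as in Definition~\ref{defendlesscont} is $\Om$-allowed via criterion~2) of Lemma~\ref{lemOmallowedness}: at $t=0$, property~(iii) of the \dfs\ together with~\eqref{eq:tiOmLtiOmLeps} gives $\ti\Om_0=\O$; at $t>0$, the monotonicity of~$\ti\Om$ together with $L(\ga_{|t})\le L(\ga)<L$ yields $\ti\Om_{L(\ga_{|t})}\subset\ti\Om_L=F_L\not\ni\ga(t)$. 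Then $\hat\ph\in\hat\gR_\Om$ gives continuation along~$\ga$, proving $\hat\ph\in\hat\gR$.

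For the reverse inclusion $\hat\gR\subset\bigcup_\Om\hat\gR_\Om$, given $\hat\ph$ endlessly continuable I will attach to it the canonical family
\[
\Om_L:=\{\,\om\in\C \mid \om \text{ is a singular endpoint of }\hat\ph\text{ reachable by a Lipschitz path of length }<L\,\},
\]
where~$\om$ is called a \emph{singular endpoint reachable by} a Lipschitz path $\ga\col[0,t_*]\to\C$ when $\ga(0)=0$, $\ga(t_*)=\om$, and $\hat\ph$ is analytically continuable along~$\ga_{|t}$ for every $t<t_*$ but not for $t=t_*$. Monotonicity in~$L$ is immediate, and property~(iii) follows from the positive radius of convergence $r>0$ of~$\hat\ph$: any Lipschitz path of length $<r$ stays in the disc of convergence and offers no singular endpoint, so $\Om_L=\O$ for $L\le r$. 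The key step is the finiteness of each~$\Om_L$: any witnessing path~$\ga$ for $\om\in\Om_L$ must meet the finite set~$F_L$ furnished by Definition~\ref{defendlesscont} (otherwise endless continuability itself would prolong~$\hat\ph$ along~$\ga$ all the way to~$\om$), and a rerouting-and-induction argument on the number of meetings of~$\ga$ with~$F_L$ reduces~$\Om_L$ to a finite list.

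Once~$\Om$ is confirmed to be a \dfs, I will conclude $\hat\ph\in\hat\gR_\Om$ by a maximality argument. For $\ga\in\Pi_\Om$ defined on~$[0,t_*]$, suppose by contradiction that $\hat\ph$ does not extend along all of~$\ga$ and put $s:=\sup\{\,t\in[0,t_*]:\hat\ph\text{ extends along }\ga_{|t}\,\}$; by the openness of this set (analyticity of continuation), extension works for every $t<s$ but fails at~$s$, so $\om:=\ga(s)$ is a singular endpoint witnessed by~$\ga_{|s}$. Hence $\om\in\Om_{L(\ga_{|s})+\eps}$ for every $\eps>0$, \ie $\om\in\ti\Om_{L(\ga_{|s})}$ by~\eqref{eq:deftiOmL}, which contradicts the $\Om$-allowedness of~$\ga$ at~$s$ via criterion~2) of Lemma~\ref{lemOmallowedness}. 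The main obstacle throughout is the finiteness claim in the previous paragraph: controlling those singular endpoints $\om\notin F_L$ accessible only through witnessing paths whose interiors cross~$F_L$ requires the rerouting/induction alluded to, or, equivalently, an identification of~$\Om_L$ with the $\C$-projection of the singular boundary of the Riemann surface of~$\hat\ph$ at length $<L$, in the spirit of the universal surface~$X_\Om$ constructed in Section~\ref{sec:3}.
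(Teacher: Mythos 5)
Your easy inclusion $\bigcup_\Om\hat\gR_\Om\subset\hat\gR$ matches the paper's (take $F_L=\ti\Om_L$). For the converse you take a genuinely different route. The paper builds the \dfs\ directly from the data of Definition~\ref{defendlesscont}, setting $\Om_L\defeq\bigcup_{k=0}^{\floor{L/\de}}F_k$ where~$\de$ is the radius of convergence and~$F_n$ is the finite set attached to the length bound $(n+1)\de$; finiteness of~$\Om_L$ is then free, and all the work goes into proving $\Om$-continuability by an induction over the length intervals $[n\de,(n+1)\de)$, using Lemma~\ref{lemPathModif} to reroute the initial portion of the path away from~$F_n$. You instead define~$\Om_L$ intrinsically as the set of singular endpoints reachable in length $<L$, which makes the $\Om$-continuability almost automatic (your maximality argument giving $\ga(s)\in\ti\Om_{L(\ga_{|s})}$ is correct) but shifts the entire burden onto the finiteness of~$\Om_L$. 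That step, which you rightly flag as the crux, is exactly where Lemma~\ref{lemPathModif} is needed, and you should invoke it explicitly rather than gesture at ``a rerouting-and-induction argument'': given a witnessing path~$\ga$ of length $L(\ga)<L$ ending at $\om\notin F_L$, pick $t_1<t_*$ with $\ga([t_1,t_*])\cap F_L=\O$, apply Lemma~\ref{lemPathModif} to~$\ga_{|t_1}$ with $F=F_L$ and $\eps=L-L(\ga)$, and concatenate with $\restr{\ga}{[t_1,t_*]}$; the resulting path avoids~$F_L$ on $(0,t_*]$ and has length $<L$, so Definition~\ref{defendlesscont} continues~$\hat\ph$ all the way to~$\om$, a contradiction. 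This single rerouting shows $\Om_L\subset F_L$, so no induction on the number of meetings is needed. Your alternative suggestion of identifying~$\Om_L$ via the universal surface~$X_\Om$ is circular, since~$X_\Om$ is built from a \dfs\ already in hand; drop it. Once the finiteness step is completed as above, your argument is a valid and arguably cleaner reorganization of the proof, at the cost of relying on the same technical lemma in a different place.
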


%%%%%%%%%%%%%%%%%%%%%%%%%%%%%%%%%%%%%%%%%%%%%%%%%%%%

Before proving Theorem~\ref{propidgROmgR}, we state a technical result.

%%%%%%%%%%%%%%%%%%%%%%%%%%%%%%%%%%%%%%%%%%%%%%%%%%%%

\begin{lmm}   \label{lemPathModif}
  Suppose that we are given a germ $\hat\ph \in \C\{\ze\}$ that can be
  analytically continued along a path $\ga\col[0,t_*]\to\C$ of~$\Pi$,
  and that $F$ is a finite subset of~$\C$.
Then, for each $\eps>0$, there exists a path $\ga^*\col[0,t_*]\to\C$
of~$\Pi$ such that
\begin{itemize}
\item
$\ga^*\big( (0,t_*) \big) \subset \C\setminus F$,
\item
$L(\ga^*)< L(\ga)+\eps$,
\item
$\ga^*(t_*)=\ga(t_*)$, 
the germ~$\hat\ph$ can be analytically continued along~$\ga^*$ and the
analytic continuations along~$\ga$ and~$\ga^*$ coincide.
\end{itemize}
\end{lmm}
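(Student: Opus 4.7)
The strategy is to reduce the construction of $\ga^*$ to local modifications on sub-intervals of $[0,t_*]$ on each of which the analytic continuation of $\hat\ph$ along $\ga$ is single-valued on a disc containing the image of $\ga$.

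I will first use compactness of $[0,t_*]$ together with the hypothesis of analytic continuability along $\ga$ to extract a uniform radius $r_0>0$ such that, for every $t\in[0,t_*]$, the germ obtained by continuation of $\hat\ph$ along $\ga_{|t}$ extends to a single-valued holomorphic function $\Phi_t$ on the open disc $D(\ga(t),r_0)$. Then, by uniform continuity of $\ga$, I will choose a subdivision $0=s_0<s_1<\cdots<s_n=t_*$ fine enough that $\ga\big([s_k,s_{k+1}]\big)\subset D(\ga(s_k),r_0/3)$ for every $k$, and with $\ga(s_k)\notin F$ for $1\le k\le n-1$. The last condition is generically achievable since the closed set $\ga\ii(F)$ has empty interior; the degenerate case, in which $\ga$ is locally constant equal to some $\om\in F$ on a subinterval, is handled by a preliminary modification replacing the constant piece by a short Lipschitz excursion that leaves $\om$ and returns inside $D(\om,r_0)$.

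The core step will be to replace each arc $\ga|_{[s_k,s_{k+1}]}$ by an explicit Lipschitz path $\de_k$ from $\ga(s_k)$ to $\ga(s_{k+1})$ lying in $D(\ga(s_k),r_0/2)$ and avoiding $F$ on its open interior. Such a $\de_k$ is built from the straight segment joining its endpoints by replacing a small neighbourhood of each of the (finitely many) points of $F$ that the segment meets by a semicircular arc of radius $\mu$; the added length per detour is at most $(\pi-2)\mu$, so taking $\mu$ small enough one obtains $L(\de_k)\le |\ga(s_{k+1})-\ga(s_k)|+\eps/(2n)$. Concatenating the $\de_k$'s produces a candidate $\ga^*\in\Pi$ (after reparametrization on $[0,t_*]$), and using $|\ga(s_{k+1})-\ga(s_k)|\le L(\ga|_{[s_k,s_{k+1}]})$ yields $L(\ga^*)\le L(\ga)+\eps/2<L(\ga)+\eps$; avoidance of $F$ in the open interior is immediate inside each $\de_k$ and holds also at each junction $s_k$, $1\le k\le n-1$, since $\ga(s_k)\notin F$.

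Preservation of analytic continuation hinges on the single-valuedness of $\Phi_{s_k}$ on $D(\ga(s_k),r_0)$: continuation of $\hat\ph$ along $\de_k$ starting at $\ga(s_k)$ just reproduces $\Phi_{s_k}$, whose germ at $\ga(s_{k+1})$ coincides with the germ produced by continuation along the original arc $\ga|_{[s_k,s_{k+1}]}$; by induction on $k$ the continuations along $\ga$ and $\ga^*$ to the common endpoint $\ga(t_*)$ agree. The principal obstacle I anticipate is the first step, specifically handling the degenerate case where $\ga$ is locally constant at a point of $F$: the preliminary excursion must be performed without inflating the length beyond the $\eps$-budget and without introducing new monodromy, which is ensured precisely by the single-valuedness of $\Phi_t$ on the disc around $\om$.
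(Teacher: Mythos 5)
Your proposal is correct and follows essentially the same route as the paper's proof: subdivide $[0,t_*]$ into arcs each contained in a disc on which the continued germ is single-valued, choose the junction points outside $F$, and replace each arc by the chord with small semicircular detours around the points of $F$ it meets, so that the length exceeds the chord length (hence the original arc length) by at most an $\eps$-controlled amount while the continuation is unchanged by single-valuedness on each disc. The only cosmetic differences are that you extract a uniform radius by compactness where the paper uses a chain of discs of varying size, and that the paper disposes of your ``degenerate case'' up front by assuming $t\mapsto L(\ga_{|t})$ strictly increasing.
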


%%%%%%%%%%%%%%%%%%%%%%%%%%%%%%%%%%%%%%%%%%%%%%%%%%%%

\begin{proof}[Proof of Lemma~\ref{lemPathModif}]
  Without loss of generality, we can assume that $\ga\big([0,t_*]\big)$
  is not reduced to $\{0\}$ and that $t\mapsto L(\ga_{|t})$ is
  strictly increasing.

  The analytic continuation assumption allows us to find a finite
  subdivision $0=t_0 < \cdots < t_m = t_*$ of $[0,t_*]$ together with
  open discs $\De_0,\ldots,\De_m$ 
  %
  % centred at the points $\ga(t_0), \ldots, \ga(t_m)$
  %
  so that, for each~$k$, 
%
% \begin{itemize}
%
% \item 
%
  $\ga(t_k)\in \De_k$, the analytic continuation of~$\hat\ph$
  along~$\ga_{|t_k}$ extends holomorphically to~$\De_k$,
%
% \item
%
and % moreover
$\ga\big( [t_k,t_{k+1}] \big) \subset \De_{k}$ if $k<m$.
%
% \end{itemize}

For each $k\ge1$, let us pick $s_k \in (t_{k-1},t_k)$ such that 
$\ga\big( [s_k,t_k] \big) \subset \De_{k-1}\cap\De_k$;
increasing the value of~$s_k$ if necessary, we can assume
$\ga(s_k) \notin F$.
Let us also set $s_0\defeq 0$ and $s_{m+1}\defeq t_*$, so that
\[
0 \le k \le m \quad \Rightarrow \quad \left\{
\begin{aligned}
&\ga\big( [s_k,s_{k+1}] \big) \subset \De_k, \\[1ex]
&\text{the analytic continuation of~$\hat\ph$
  along~$\ga_{|s_k}$ is holomorphic in~$\De_k$} \\[1ex]
& \ga(s_k)\notin F \;\text{except maybe if $k=0$},\\[1ex]
& \ga(s_{k+1})\notin F \;\text{except maybe if $k=m$.}
\end{aligned} \right.
\]
We now define~$\ga^*$ by specifying its restriction $\restr{\ga^*}{[s_k,s_{k+1}]}$ for
each~$k$ so that it has the same endpoints as $\restr{\ga}{[s_k,s_{k+1}]}$ and,
\begin{enumerate}[--]
\item
if the open line segment $S\defeq \big(\ga(s_k),\ga(s_{k+1}) \big)$ is
contained in $\C\setminus F$, then we let
$\restr{\ga^*}{[s_k,s_{k+1}]}$ 
start at~$\ga(s_k)$ and end at~$\ga(s_{k+1})$
following~$S$, by setting
\[
\ga^*(t) \defeq \ga(s_k) + \tfrac{t-s_k}{s_{k+1}-s_k} \big( \ga(s_k)-\ga(s_{k+1}) \big)
\quad \text{for $t\in[s_k,s_{k+1}]$,}
\]
\item if not, then $S\cap F = \{\om_1,\ldots,\om_\nu\}$ with $\nu\ge1$
  (depending on~$k$); we pick $\rho>0$ small enough so that
\[
\pi\rho < \min\big\{
\dem\abs{\om_i-\ga(s_k)}, \; \dem\abs{\om_i-\ga(s_{k+1}) },\; \dem\abs{\om_j-\om_i},\;
\tfrac{\eps}{\nu(m+1)} \mid 1\le i,j,\le\nu,\; i\neq j
\big\}
\]
and we let $\restr{\ga^*}{[s_k,s_{k+1}]}$ follow~$S$ except that it
circumvents each~$\om_i$ by following a half-circle of radius~$\rho$
contained in~$\De_k$.
\end{enumerate}
This way, $\restr{\ga^*}{[s_k,s_{k+1}]}$ stays in~$\De_k$; the resulting
path $\ga^* \col [0,t_*] \to \C$ is thus a path of analytic continuation
for~$\hat\ph$ and the analytic continuations along~$\ga$ and~$\ga^*$
coincide.
On the other hand, the length of $\restr{\ga^*}{[s_k,s_{k+1}]}$ is $< \abs{\ga(s_k)-\ga(s_{k+1})} + \frac{\eps}{m+1}$,
whereas the length of $\restr{\ga}{[s_k,s_{k+1}]}$ is $\ge
\abs{\ga(s_k)-\ga(s_{k+1})}$, 
hence $L(\ga^*) < L(\ga) + \eps$.
\end{proof}

%%%%%%%%%%%%%%%%%%%%%%%%%%%%%%%%%%%%%%%%%%%%%%%%%%%%

\begin{proof}[Proof of Theorem~\ref{propidgROmgR}]
%
%%%%%%%%%%%%%%%%%%%%%%%%%%%%%%%%%%%%%%%%%%%%%%%%%%%%
%
Suppose first that~$\Om$ is a \dfs\ and $\hat\ph \in \hat\gR_\Om$.
Then, for every $L>0$, $\hat\ph$ meets the requirement of
Definition~\ref{defendlesscont} with $F_L = \ti\Om_L$,
hence $\hat\ph\in\hat\gR$.
Thus $\hat\gR_\Om \subset \hat\gR$, which yields one inclusion in~\eqref{eqidentitygROmgR}.
%
%%%%%%%%%%%%%%%%%%%%%%%%%%%%%%%%%%%%%%%%%%%%%%%%%%%%

Suppose now $\hat\ph\in\hat\gR$. 
In view of Definition~\ref{defendlesscont}, the radius of
convergence~$\de$ of~$\hat\ph$ is positive and, for each positive
integer~$n$, we can choose a finite set~$F_{n}$ such that
\begin{eq-text}   \label{eqpropertyFnde}
the germ~$\hat\ph$ can be analytically continued along any path
$\ga\col[0,1]\to\C$ of~$\Pi$ such that $L(\ga) < (n+1)\de$ and
$\ga\big( (0,1] \big) \subset \C\setminus F_{n}$.
\end{eq-text}
Let $F_0\defeq \O$. The property~\eqref{eqpropertyFnde} holds for
$n=0$ too.
For every real $L\ge0$, we set
\[
\Om_L \defeq \bigcup_{k=0}^n F_{k}
\qquad \text{with $n \defeq \floor{L/\de}$.}
\]
One can check that $\Om\defeq (\Om_L)_{L\in\Rp}$ is a \dfs\ which coincides with its upper
closure.
We will show that $\hat\ph\in\hat\gR_\Om$.

Pick an arbitrary $\ga\col[0,1]\to\C$ such that $\ga\in\Pi_\Om$.
It is sufficient to prove that~$\hat\ph$ can be analytically
continued along~$\ga$.
Our assumption amounts to $\ga(t)\in\C\setminus\Om_{L(\ga_{|t})}$ for
each $t\in[0,1]$.
Without loss of generality, we can assume that $\ga\big([0,1]\big)$ is
not reduced to $\{0\}$ and that $t\mapsto L(\ga_{|t})$ is strictly increasing.
Let 
\[ N \defeq \floor{L(\ga)/\de}. \]
We define a subdivision $0=t_0 < t_1 < \cdots < t_N \le 1$ by the
requirement $L(\ga_{|t_n}) = n\de$ and set
\[
I_n \defeq [t_n,t_{n+1}) \ens\text{for $0 \le n < N$,}
\qquad I_N \defeq [t_N,1].
\]
For each integer~$n$ such that $0 \le n \le N$,
\beglab{eqtInLgat}
t \in I_n \quad\Rightarrow\quad n\de \le L(\ga_{|t}) < (n+1)\de,
\edla
thus $\Om_{L(\ga_{|t})} = \bigcup_{k=0}^n F_{k}$, in particular
\beglab{eqtIngatFn}
t \in I_n \quad\Rightarrow\quad \ga(t) \in \C\setminus F_{n}.
\edla
Let us check by induction on~$n$ that~$\hat\ph$ can be
analytically continued along $\ga_{|t}$ for any $t\in I_n$.

If $t\in I_0$, then $\ga_{|t}$ has length $< \de$ and the conclusion
follows from~\eqref{eqpropertyFnde}.

Suppose now that $1\le n \le N$ and that the property holds for
$n-1$. Let $t\in I_n$. 
By \eqref{eqtInLgat}--\eqref{eqtIngatFn}, we have $L(\ga_{|t}) <
(n+1)\de$ and $\ga\big( [t_n,t] \big) \subset \C\setminus F_n$.
\begin{enumerate}[--]
\item If $\ga\big( (0,t_n) \big) \cap F_n$ is empty, then the
conclusion follows from~\eqref{eqpropertyFnde}.
\item If not, then,
since $\C\setminus F_n$ is open, we can pick $t_*<t_n$ so that 
$\ga\big( [t_*,t] \big) \subset \C\setminus F_n$, 
and the induction hypothesis shows that~$\hat\ph$ can be analytically
continued along~$\ga_{|t_*}$.
We then apply Lemma~\ref{lemPathModif} to $\ga_{|t_*}$ with $F=F_n$ and
$\eps = (n+1)\de-L(\ga_{|t})$:
we get a path $\ga^* \col [0,t_*] \to \C$ which defines the same
analytic continuation for~$\hat\ph$ as~$\ga_{|t_*}$, avoids~$F_n$ and
has length $< L(\ga_{|t_*})+\eps$.
The concatenation of~$\ga^*$ with $\restr{\ga}{[t_*,t]}$ is a
path~$\ga^{**}$ of length $<(n+1)\de$ which avoids~$F_n$, so it is a
path of analytic continuation for~$\hat\ph$ because
of~\eqref{eqpropertyFnde},
and so is~$\ga$ iteself.
\end{enumerate}
\end{proof}

%%%%%%%%%%%%%%%%%%%%%%%%%%%%%%%%%%%%%%%%%%%%%%%%%%%%

%%%%%%%%%%%%%%%%%%%%%%%%%%%%%%%%%%%%%%%%%%%%%%%%%%%%
%%%%%%%%%%%%%%%%%%%%%%%%%%%%%%%%%%%%%%%%%%%%%%%%%%%%

%%%%%%%%%%%%%%%%%%%%%%%%%%%%%%%%%%%%%%%%%%%%%%%%%%%%

\subsection{Sums of discrete filtered sets}   \label{sec:sumsdsf}

%%%%%%%%%%%%%%%%%%%%%%%%%%%%%%%%%%%%%%%%%%%%%%%%%%%%
%%%%%%%%%%%%%%%%%%%%%%%%%%%%%%%%%%%%%%%%%%%%%%%%%%%%

It is easy to see that, if~$\Om$ and~$\Om'$ are \dfs, then the formula
\beglab{eq:defsumdfs}
(\Om *\Om')_L \defeq \{\, \om_1+\om_2 \mid
\om_1\in\Om_{L_1}, \om_2\in\Om'_{L_2}, L_1+L_2=L \, \}
\cup\Om_{L}\cup\Om'_{L}
\quad\text{for $L\in\Rp$}
\edla
defines a \dfs\ $\Om*\Om'$. We call it the \emph{sum} of~$\Om$ and~$\Om'$.

The proof of the following lemma is left to the reader.

\begin{lmm}
The law~$*$ on the set of all \dfs\ is commutative and associative.
The formula $\Om^{*n} \defeq 
\underbrace{ \Om* \cdots*\Om }_{\text{$n$ times}}$
% \underset{ \substack{\uparrow \\ \text{$i$th position}} }{ \Om* \cdots*\Om }$ 
(for $n\ge1$) defines an inductive
system, which gives rise to a \dfs
\[
\Om^{*\infty} \defeq \varinjlim_n\ \Om^{*n}.
\]
\end{lmm}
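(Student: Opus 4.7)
\emph{Proof plan.} The plan is to dispatch each assertion in turn by elementary verifications.

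Commutativity is immediate from the manifestly symmetric form of~\eqref{eq:defsumdfs}. For associativity, I will unfold both $((\Om*\Om')*\Om'')_L$ and $(\Om*(\Om'*\Om''))_L$ and check, branch by branch, that they reduce to the same seven-fold set: the triple sum $\om_1+\om_2+\om_3$ with $L_1+L_2+L_3=L$; the three pair sums $\om_i+\om_j$ with $L_i+L_j=L$; and the three singletons $\om_i$ belonging to the $i$-th factor at level~$L$ (writing $\Om^{(1)}=\Om$, $\Om^{(2)}=\Om'$, $\Om^{(3)}=\Om''$). This is pure bookkeeping: each nonempty subset of $\{1,2,3\}$ indexing the support of the sum corresponds to exactly one branch arising on either side of the expansion.

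For the inductive system, the union term in~\eqref{eq:defsumdfs} gives $(\Om^{*n})_L \subset (\Om^{*n}*\Om)_L = (\Om^{*(n+1)})_L$ at every~$L$, hence $\Om^{*n} \subset \Om^{*(n+1)}$. I will then set $(\Om^{*\infty})_L \defeq \bigcup_{n\ge1}(\Om^{*n})_L$ and verify the three dfs axioms. Monotonicity in~$L$ is inherited termwise. Choosing $\de>0$ so that $\Om_\de = \O$, a straightforward induction on~$n$ (using the three branches of the defining formula, since all relevant levels remain $\le \de$) shows $(\Om^{*n})_L = \O$ for every $L\le \de$ and every~$n$, whence $(\Om^{*\infty})_\de = \O$.

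The crux is the finiteness of $(\Om^{*\infty})_L$. I will prove by induction on~$n$ that every $\om \in (\Om^{*n})_L$ admits a representation $\om = \om_1+\cdots+\om_k$ with $k \le n$, each $\om_j \in \Om_{L_j}$, and $\sum_j L_j \le L$; the inductive step follows the same branch analysis as in the associativity argument. Because $\om_j \in \Om_{L_j}$ forces $\Om_{L_j}\neq \O$ and hence $L_j \ge \de$, such a representation satisfies $k\de \le L$. Together with the monotonicity inclusion $\Om_{L_j} \subset \Om_L$ (valid since $L_j\le L$), this shows the summands come from the finite set~$\Om_L$ and the number of summands is bounded by $L/\de$, so only finitely many sums occur and the ascending union stabilizes at $n = \lfloor L/\de\rfloor$. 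The only genuine obstacle is the combinatorial matching in the associativity proof and the length-bound induction; both are tedious but mechanical, and no conceptual difficulty arises.
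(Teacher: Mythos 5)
The paper leaves this proof to the reader, so there is nothing to compare against; your argument is the standard one and it is correct and complete. In particular you correctly identify and handle the one nontrivial point, namely finiteness of $(\Om^{*\infty})_L$, via the representation $\om=\om_1+\cdots+\om_k$ with $\om_j\in\Om_{L_j}$, $\sum L_j\le L$, which together with $\Om_{L_j}\neq\O\Rightarrow L_j>\de$ bounds the number of summands by $L/\de$ and confines them to the finite set $\Om_L$.
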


%%%%%%%%%%%%%%%%%%%%%%%%%%%%%%%%%%%%%%%%%%%%%%%%%%%%

As shown in \cite{CNP} and \cite{DO}, the sum of \dfs\ is useful to study the
convolution product:

%%%%%%%%%%%%%%%%%%%%%%%%%%%%%%%%%%%%%%%%%%%%%%%%%%%%
%%%%%%%%%%%%%%%%%%%%%%%%%%%%%%%%%%%%%%%%%%%%%%%%%%%%

\begin{thm}[\cite{DO}]\label{thm:DO}
  Assume that $\Om$ and $\Om'$ are \dfs\ and $\hat\ph\in\hat\gR_\Om$,
  $\hat\psi\in\hat\gR_{\Om'}$.
Then the convolution product $\hat\ph * \hat\psi$ is $\Om*\Om'$-continuable.
\end{thm}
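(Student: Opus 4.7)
Fix a path $\ga \in \Pi_{\Om * \Om'}$, $\ga \col [0,t_*] \to \C$; by Remark \ref{rem:wlogupclos} we may assume $\Om = \ti\Om$ and $\Om' = \ti\Om'$. Write $L_t \defeq L(\ga_{|t})$. The idea is to construct the analytic continuation of $\hat\ph * \hat\psi$ along~$\ga$ by producing a continuous family of ``symmetric'' integration contours $\ell_t$ from $0$ to $\ga(t)$ such that both factors of the integrand $\hat\ph(\xi) \hat\psi(\ga(t) - \xi)$ are unambiguously defined by analytic continuation along $\ell_t$, and then setting
\[
F(t) \defeq \int_{\ell_t} \hat\ph(\xi) \, \hat\psi(\ga(t) - \xi) \, \dd\xi.
\]

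More precisely, I would require that $\ell_t \col [0, L(\ell_t)] \to \C$, parametrized by arc length, satisfies at every~$\tau$ the condition $\ell_t(\tau) \notin \Om_\tau$ (ensuring $\ell_t|_{[0,\tau]} \in \Pi_\Om$, so that $\hat\ph(\ell_t(\tau))$ is the $\Om$-continuation along $\ell_t$) together with the symmetric condition $\ga(t) - \ell_t(\tau) \notin \Om'_{L(\ell_t) - \tau}$ (ensuring that the reversed-shifted path $\tau' \mapsto \ga(t) - \ell_t(L(\ell_t) - \tau')$ restricted to $[0, L(\ell_t)-\tau]$ lies in $\Pi_{\Om'}$, so that $\hat\psi(\ga(t) - \ell_t(\tau))$ is defined); taking $L(\ell_t) \le L_t$ is enough. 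For $t$ close to~$0$, $\ell_t$ can be chosen as the straight segment, and then $F(t) = \hat\ph * \hat\psi(\ga(t))$ by~\eqref{eqdefconvol}; local holomorphy of $F$ in $\ze = \ga(t)$ follows by fixing $\ell_{t_0}$ near $t_0$ and adjusting only its terminal portion as $\ze$ varies in a neighborhood of $\ga(t_0)$, then applying differentiation under the integral sign.

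The existence of $\ell_t$ at each fixed~$t$ rests on the key unfolding: $\ga(t) \notin (\Om * \Om')_{L_t}$, together with~\eqref{eq:defsumdfs}, gives
\[
\Om_\tau \cap \big( \ga(t) - \Om'_{L_t - \tau} \big) = \emptyset \quad \text{for every } \tau \in [0, L_t].
\]
Hence, at each~$\tau$, the two finite ``forbidden'' sets appearing in the conditions on $\ell_t$ are disjoint, and the straight-line contour $\tau \mapsto (\tau/|\ga(t)|)\ga(t)$ (of length $|\ga(t)| \le L_t$) can be bent past each bad point on a chosen side without any contradiction between the two requirements.

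The main obstacle is making the family $t \mapsto \ell_t$ depend continuously on~$t$. The relevant discrete data $\Om_\tau$ and $\Om'_{L_t - \tau}$ jump at the threshold values of Lemma~\ref{lemstructOm}, and the point set $\ga(t) - \Om'_{L_t - \tau}$ moves as $\ga$ does, so I would decompose $[0, t_*]$ into finitely many closed subintervals on each of which the index sets of active singularities are locally constant, carry out the contour deformation uniformly on each subinterval (where only a fixed finite forbidden set has to be avoided), and splice the resulting local families continuously at the endpoints. The disjointness above is precisely what guarantees that these local homotopies can be made coherent at the splicings and that no topological obstruction appears when $\ga$ skirts a singular point of $\Om * \Om'$. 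Once $\ell_t$ is constructed, the independence of $F(t)$ under small homotopies of $\ell_t$ is routine, and $F$ provides the sought-for analytic continuation of $\hat\ph * \hat\psi$ along~$\ga$.
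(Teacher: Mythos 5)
The paper does not actually prove Theorem~\ref{thm:DO}: it quotes it from \cite{DO}, and the machinery it does develop (Section~\ref{sec:4}) establishes a stronger, quantitative statement for $n$-fold convolutions by a different route---a non-autonomous vector field on $(\R\times\C)^n$ whose flow deforms the whole integration simplex at once, the weight $\eta(v_j)/D$ automatically keeping each component $\Om$-allowed while the sum of the components tracks $\ti\ga(t)$. Your proposal instead follows the classical ``symmetrically contractile contour'' route of \'Ecalle and \cite{CNP}. Your unfolding of the sum condition is correct: if $\om\in\Om_\tau$ and $\ga(t)-\om\in\Om'_{L(\ga_{|t})-\tau}$, then $\ga(t)\in(\Om*\Om')_{L(\ga_{|t})}$ by~\eqref{eq:defsumdfs}, so the two families of forbidden points cannot pinch the contour; and reducing local holomorphy of $F$ to moving only the terminal portion of $\ell_{t_0}$ is standard.

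The genuine gap is at the step you yourself flag as ``the main obstacle'': the existence of a continuous, coherent family $t\mapsto\ell_t$ satisfying the two coupled conditions is asserted, not constructed, and this is the entire content of the theorem. The difficulty is not that a curve must avoid a finite set; it is that the forbidden sets at the point $\ell_t(\tau)$ depend on the arc-length parametrization of $\ell_t$ itself: every detour inserted to avoid a point of $\Om_\tau$ increases $L(\ell_t)$, hence enlarges $\Om'_{L(\ell_t)-\tau'}$ for every other $\tau'$, and shifts the index $\tau$ of all later points. Moreover, when $\ga$ is close to a ray one has $|\ga(t)|$ close to $L(\ga_{|t})$, so the straight segment already saturates your constraint $L(\ell_t)\le L(\ga_{|t})$ and any bending violates it; you must invoke the quantitative slack $\dist\big(\ti\ga(t),\cS_{\Om*\Om'}\big)\ge\de$ for $\ga$ in some $\Pi_{\Om*\Om'}^{\de,L}$, bound the number of active singular points via Lemma~\ref{lemstructOm}, and check that the total extra length of all detours stays below the available slack uniformly in~$t$. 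Likewise, the claim that ``the disjointness guarantees that the local homotopies can be made coherent at the splicings'' is precisely the assertion requiring proof; the paper's introduction notes that \cite{CNP}'s treatment of this very point ``contains obscurities and at least one mistake.'' To close the gap you would either have to carry out this bookkeeping in full, or switch to the flow construction of Proposition~\ref{prp:constructPsit} with $n=2$ applied to $1*\hat\ph*\hat\psi$, which produces the coherent deformation for free and yields Theorem~\ref{thm:DO} after one differentiation.
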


%%%%%%%%%%%%%%%%%%%%%%%%%%%%%%%%%%%%%%%%%%%%%%%%%%%%
%%%%%%%%%%%%%%%%%%%%%%%%%%%%%%%%%%%%%%%%%%%%%%%%%%%%

\begin{rem}   \label{rem:contrexdsf}
  Note that the notion of $\Sig$-continuability in the sense of
  Definition~\ref{dfn:Sigcont} does not give such flexibility, because
  there are closed discrete sets~$\Sig$ and~$\Sig'$ such that
  $\Om(\Sig)*\Om(\Sig') \neq \Om(\Sig'')$ for any closed
  discrete~$\Sig''$
(take e.g.\ $\Sig = \Sig' = (\Zpp\sqrt{2}) \cup \Zmm$),
and in fact there are $\Sig$-continuable functions~$\hat\ph$ such that
$\hat\ph*\hat\ph$ is not $\Sig''$-continuable for any~$\Sig''$.
\end{rem}

%%%%%%%%%%%%%%%%%%%%%%%%%%%%%%%%%%%%%%%%%%%%%%%%%%%%

In view of Theorem~\ref{propidgROmgR}, a direct consequence of
Theorem~\ref{thm:DO} is that the space of endlessly continuable
functions~$\hat\gR$ is stable under convolution,
and the space of resurgent formal series~$\ti\gR$ is a subring of the
ring of formal series $\C[[z\ii]]$.

%%%%%%%%%%%%%%%%%%%%%%%%%%%%%%%%%%%%%%%%%%%%%%%%%%%%

Given $\ti\ph\in\ti\gR_\Om \cap z\ii\C[[z\ii]]$, Theorem~\ref{thm:DO} guarantees the
$\Om^{*k}$-resurgence of $\ti\ph^k$ for every integer~$k$, hence its
$\Om^{*\infty}$-resurgence. 
This is a first step towards the proof of the resurgence of
$F(\ti\ph)$ for $F(w) = \sum c_k w^k \in\C\{w\}$, \ie
Theorem~\ref{thmsubstgR} in the case $r=1$,
however some analysis is needed to prove the convergence of $\sum c_k
\ti\ph^k$ in some appropriate topology.
What we need is a precise estimate for the convolution product of
an arbitrary number of endlessly continuable functions, 
and this will be the content of Theorem~\ref{thm:4.9}.
In Section \ref{sec:5}, the substitution problem will be discussed in
a more general setting, resulting in Theorem~\ref{thmsubstOmgR}, which
is more general and more precise than Theorem~\ref{thmsubstgR}.

%%%%%%%%%%%%%%%%%%%%%%%%%%%%%%%%%%%%%%%%%%%%%%%%%%%%

%\bcb

\subsection{Discrete doubly filtered sets and a more general
  definition of resurgence}   \label{secddfsgenres}

%%%%%%%%%%%%%%%%%%%%%%%%%%%%%%%%%%%%%%%%%%%%%%%%%%%%

We now define the spaces~$\hat\gR\dv$
and~$\ti\gR\dv$ which were alluded to in the introduction.
We first require the notion of ``direction variation'' of a
$C^{1+\text{Lip}}$ path.

We denote by $\Pi\dv$ the set of all $C^1$ paths~$\ga$ belonging
to~$\Pi$, such that~$\ga'$ is Lipschitz and never vanishes.
By Rademacher's theorem, $\ga''$ exists a.e.\ on the interval of
definition $[0,t_*]$ of~$\ga$ and is essentially bounded. We can thus define the \emph{direction variation}
$V(\ga)$ of $\ga\in\Pi\dv$ by
\[
V(\ga)\defeq
\int_0^{t_*}
\abs*{{\rm Im}\frac{\ga''(t)}{\ga'(t)}}
\dd t
\]
(notice that one can write $\ga'(t) = \abs{\ga'(t)}\,\ee^{\I\tht(t)}$
with a real-valued Lipschitz function~$\tht$, and then ${\rm
  Im}\frac{\ga''(t)}{\ga'(t)}=\tht'$, hence~$V(\ga)$ is nothing but the
length of the path~$\tht$).
Note that the function $t \mapsto V(\ga_{|t})$ is Lipschitz.
%

% Now, we introduce the notion of endlessly continuable functions \bdv:

\begin{dfn}
A convergent power series $\hat\ph\in\C\{\ze\}$ is said to be
\emph{endlessly continuable \bdv} (and we write
$\hat\ph\in\hat\gR\dv$) if, for every real $L,M>0$, there exists a
finite subset~$F_{L,M}$ of~$\C$ such that~$\hat\ph$ can be analytically continued along every path
$\ga \col [0,1] \to \C$ such that $\ga\in\Pi\dv$,
$L(\ga)<L$, $V(\ga)<M$,
and $\ga\big( (0,1] \big) \subset \C\setminus F_{L,M}$.
\end{dfn}

We also set
$\ti\gR\dv \defeq \cB\ii(\C\de \oplus \hat\gR\dv)$.
Note that $\hat\gR\subset\hat\gR\dv \subset \C\{\ze\}$ and 
$\ti\gR\subset\ti\gR\dv \subset \C[[z\ii]]$.

%%%%%%%%%%%%%%%%%%%%%%%%%%%%%%%%%%%%%%%%%%%%%%%%%%%%

\begin{dfn}
A \emph{discrete doubly filtered set}, or \emph{\ddfs}\ for short, is a
family $\Om = (\Om_{L,M})_{L,M\in\Rp}$ that satisfies
\begin{enumerate}[i)]
\item
$\Om_{L,M}$ is a finite subset of~$\C$ for each~$L$ and~$M$, 
\item
$\Om_{L_1,M_1}\subseteq \Om_{L_2,M_2}$ 
when $L_1\leq L_2$ and $M_1\leq M_2$,
\item
there exists $\de>0$ such that $\Om_{\de,M}=\O$
for all $M\geq0$.
\end{enumerate}
\end{dfn}
Notice that 
%each section 
%$\Om|_\mu \defeq (\Om_{L,\mu})_{L\in\Rp}$ of \ddfs\ $\Om$ 
%at $M=\mu$ defines a \dfs\ 
%On the other hand, 
a \dfs\ $\Om$ can be regarded as a \ddfs\ $\Om\dv$ by
setting $\Om_{L,M}\dv:=\Om_{L}$ for $L,M\geq0$.

%%%%%%%%%%%%%%%%%%%%%%%%%%%%%%%%%%%%%%%%%%%%%%%%%%%%

For a \ddfs\ $\Om$, we set
$
\cS_\Om \defeq \big\{ (\mu,\la,\om)\in\R^2\times\C \mid
\mu\ge0,\la\ge0 \;\text{and}\; 
\om\in\Om_{\la,\mu} \big\}
$
and $\cM_\Om \defeq \big(\R^2\times\C\big) \setminus \ov\cS_\Om$,
where $\ov\cS_\Om$ is the closure of $\cS_\Om$ 
in $\R^2\times\C$. 
We call \emph{$\Om$-allowed path} any $\ga\in\Pi\dv$ such that
\beglab{eqdeftigadv}
\ti\ga\dv(t) \defeq 
\big( V(\ga_{|t}),L(\ga_{|t}), 
\ga(t) \big) \in \cM_\Om
\ens\text{for all $t$.}
\edla
We denote by $\Pi_\Om\dv$ the set of all $\Om$-allowed paths.
Finally, the set of \emph{$\Om$-continuable functions}
(resp.\ \emph{$\Om$-resurgent series}) is defined in the same way as in
Definition~\ref{defOmcontOmres},
and denoted by $\hat\gR_\Om\dv$
(resp.\ $\ti\gR_\Om\dv$).
%
%%%%%%%%%%%%%%%%%%%%%%%%%%%%%%%%%%%%%%%%%%%%%%%%%%%%
%
Arguing as for Theorem~\ref{propidgROmgR}, one obtains
\begin{equation}
\hat\gR\dv = \bigcup\limits_{\Om\;\text{\ddfs}} \hat\gR\dv_\Om,
\qquad
\ti\gR\dv = \bigcup\limits_{\Om\;\text{\ddfs}} \ti\gR_\Om\dv.
\end{equation}
%

%%%%%%%%%%%%%%%%%%%%%%%%%%%%%%%%%%%%%%%%%%%%%%%%%%%%

The \emph{sum} $\Om*\Om'$ of two \ddfs~$\Om$ and~$\Om'$ is the \ddfs\ defined by setting,
for $L,M\in\Rp$,
\beglab{eq:defsumddfs}
(\Om *\Om')_{L,M} \defeq \{\, \om_1+\om_2 \mid
\om_1\in\Om_{L_1,M}, \om_2\in\Om'_{L_2,M}, L_1+L_2=L \, \}
\cup\Om_{L,M}\cup\Om'_{L,M}.
\edla

%\ecb

%%%%%%%%%%%%%%%%%%%%%%%%%%%%%%%%%%%%%%%%%%%%%%%%%%%%
\section{The endless Riemann surface associated with a \dfs} 
\label{sec:3}
%%%%%%%%%%%%%%%%%%%%%%%%%%%%%%%%%%%%%%%%%%%%%%%%%%%%
%%%%%%%%%%%%%%%%%%%%%%%%%%%%%%%%%%%%%%%%%%%%%%%%%%%%

We introduce the notion of $\Om$-endless Riemann surfaces
for a \dfs\ $\Om$ as follows:
\begin{dfn}
We call \emph{$\Om$-endless Riemann surface} any triple $(X,\fp,\uO)$ such
that
$X$ is a connected Riemann surface,
$\fp \col X \to \C$ is a local biholomorphism,
$\uO \in \fp\ii(0)$,
and any path $\ga \col [0,1] \to \C$ of~$\Pi_\Om$ has a lift
$\uga \col [0,1] \to X$ such that $\uga(0) = \uO$.
A \emph{morphism of $\Om$-endless Riemann surfaces}
is a local biholomorphism
$
\fq:(X,\fp,\uO)\to(X',\fp',\uO')
$
that makes the following diagram commutative:
\[
\begin{xy}
(0,20) *{(X,\uO)},
(30,20) *{\ (X',\uO')},
(15,0) *{(\C,0)},
{(6,20) \ar (24,20)},
{(2,16) \ar (13,4)},
{(28,16) \ar (17,4)},
(14,23) *{\fq},
(3,10) *{\fp},
(27,10) *{\fp'}
\end{xy}
\]
\end{dfn}

In this section,
we prove the existence of an initial object
$(X_{\Om},\fp_\Om,\uO_\Om)$ in the category
of $\Om$-endless Riemann surfaces:
\begin{thm}   \label{lemuniversalXOm}
There exists an $\Om$-endless Riemann surface $(X_{\Om},\fp_\Om,\uO_\Om)$ such
that, for any $\Om$-endless Riemann surface $(X,\fp,\uO)$,
there is a unique morphism 
$$
\fq \col (X_{\Om},\fp_\Om,\uO_\Om) \to  (X,\fp,\uO).
$$
The $\Om$-endless Riemann surface $(X_{\Om},\fp_\Om,\uO_\Om)$ is unique up to
isomorphism
and $X_\Om$ is simply connected.
\end{thm}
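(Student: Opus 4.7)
The plan is to mimic the standard construction of the universal cover, restricted to the class~$\Pi_\Om$ of $\Om$-allowed paths. I would first declare two paths $\ga_0, \ga_1 \in \Pi_\Om$ parametrised on $[0,1]$ to be equivalent, $\ga_0 \sim \ga_1$, if $\ga_0(1) = \ga_1(1)$ and there exists a continuous map $H \col [0,1]^2 \to \C$ with $H(0,\cdot) = \ga_0$, $H(1,\cdot) = \ga_1$, $H(s,0) = 0$, $H(s,1) = \ga_0(1)$ for all~$s$, and such that $H(s,\cdot) \in \Pi_\Om$ for every~$s$. Then I set $X_\Om \defeq \Pi_\Om/{\sim}$, $\fp_\Om([\ga]) \defeq \ga(1)$, and $\uO_\Om$ the class of the constant path at~$0$.

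Next I would equip $X_\Om$ with a Riemann surface structure using ``endpoint-disc'' charts. For each class $[\ga]$, openness of $\cM_\Om$ in $\R\times\C$ yields $\eps > 0$ such that, for every~$z$ with $\abs{z - \ga(1)} < \eps$, the concatenation of~$\ga$ with the straight segment from $\ga(1)$ to~$z$ lies again in~$\Pi_\Om$ (an application of~\eqref{eqtextcharacOmalltiga}). Sending such a~$z$ to the class of that concatenation provides a candidate chart; a routine check shows that these charts assemble into a Hausdorff holomorphic atlas, that $\fp_\Om$ is a local biholomorphism, and that $X_\Om$ is path-connected via the continuous curve $t \mapsto [\ga_{|t}]$.

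The lift property is then immediate: $t \mapsto [\ga_{|t}]$ is a continuous lift of~$\ga$ through~$\uO_\Om$, so $(X_\Om, \fp_\Om, \uO_\Om)$ is an $\Om$-endless Riemann surface. For the universal property, given any $\Om$-endless Riemann surface $(X, \fp, \uO)$, I would define $\fq([\ga]) \defeq \uga(1)$, where $\uga$ denotes the lift of~$\ga$ to~$X$ starting at~$\uO$ (existence by assumption, uniqueness by local biholomorphicity of~$\fp$). Well-definedness, \ie that $\ga_0 \sim \ga_1$ forces their lifted endpoints to coincide, follows from a standard monodromy argument: subdivide the homotopy~$H$ into small squares on which lifts to~$X$ can be controlled through the local charts of~$X_\Om$ and those of~$X$, and propagate equality of lifted endpoints across the subdivision. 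Local biholomorphicity of~$\fq$ is then automatic from the chart description. Uniqueness up to isomorphism is formal from the universal property, and simple connectedness holds because any loop in $X_\Om$ at $\uO_\Om$ is the canonical lift of an $\Om$-allowed loop $\ga$ with $[\ga] = [\uO_\Om]$; lifting an allowed-path contraction of~$\ga$ then contracts the loop in $X_\Om$.

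The main obstacle will be verifying that the chart construction is genuinely well defined and compatible with~$\sim$. One must check that different representatives $\ga$ and $\ga'$ of the same class $[\ga]$ induce the same local chart on a common neighborhood of $\fp_\Om([\ga])$, which reduces to an elementary homotopy argument using short rectangles of $\Om$-allowed paths. Underlying all this is the control of how the Lipschitz $\Om$-allowedness condition behaves under small straight segment extensions; here one relies on Lemmas~\ref{lemstructOm} and~\ref{lemclosOm} and the locally constant behavior of $L \mapsto \ti\Om_L$ away from the thresholds $L_n$. This bookkeeping, rather than any deep topology, is where the most careful attention will be needed.
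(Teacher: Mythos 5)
Your construction ($X_\Om\defeq\Pi_\Om/{\sim}$ with $\sim$ given by homotopy through $\Om$-allowed paths, endpoint-disc charts, and $\fq([\ga])\defeq\uga_X(1)$) is genuinely different from the paper's, which builds $X_\Om$ concretely by gluing explicit cut planes indexed by a tree (the skeleton $Sk_\Om$), gets simple connectedness from van Kampen plus acyclicity of the tree, and proves the universal property by showing (Lemmas~\ref{lmm:3.8}--\ref{lmm:3.11}, Proposition~\ref{prp:3.10}) that two $\Om$-allowed paths with the same lifted endpoint are joined by a continuous family of $\Om$-allowed paths. Your definition makes that last statement true by fiat, but the difficulty does not vanish; it resurfaces in the two places below.

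First, chart compatibility is not mere bookkeeping. To show that equivalent representatives $\ga\sim\ga'$ induce the same germ of chart, you must append the short segment $\sigma_{\ze_0\to z}$ to \emph{every} intermediate path $H(s,\cdot)$ of a witnessing homotopy and keep them all in $\Pi_\Om$. Allowedness of the appended segment is governed by the pair $\big(L(H(s,\cdot)),\ze_0\big)$, and your relation puts no control on the lengths of the intermediate paths: $H$ is only jointly continuous and each slice is Lipschitz with no uniform constant, so $s\mapsto L(H(s,\cdot))$ may be unbounded or discontinuous, while $\dist\big((\la,\ze_0),\ov\cS_\Om\big)$ may tend to $0$ as $\la\to\infty$; then no radius $\eps>0$ works for a fixed $z\neq\ze_0$. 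The fix is to build length control into $\sim$ (e.g.\ require $(s,t)\mapsto L\big(H(s,\cdot)_{|t}\big)$ to be continuous); initiality does not care which sufficiently fine relation you quotient by.

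Second, the simple-connectedness argument is wrong as stated: a loop in $X_\Om$ based at $\uO_\Om$ is \emph{not} in general the canonical lift of an $\Om$-allowed loop. Its projection to $\C$ need not be Lipschitz, and even when it is, the accumulated arclength $L(\ga_{|t})$ of the projection is unrelated to the lengths of the representative paths of the points visited, so $\big(L(\ga_{|t}),\ga(t)\big)\in\cM_\Om$ can fail (a loop oscillating back and forth inside a single chart disc already has arbitrarily large projected arclength). Proving that an arbitrary loop is homotopic to a canonical lift is essentially the content of the paper's deformation lemmas. You can, however, sidestep this entirely: once initiality is established, simple connectedness is formal. The universal cover $\pi\col\hat X_\Om\to X_\Om$, equipped with $\fp_\Om\circ\pi$ and a base point over $\uO_\Om$, is again $\Om$-endless; initiality yields $\fq\col X_\Om\to\hat X_\Om$ with $\pi\circ\fq=\ID$ (uniqueness of the morphism $X_\Om\to X_\Om$) and $\fq\circ\pi=\ID$ (uniqueness of lifts through $\pi$ from the connected $\hat X_\Om$), so $\pi$ is a biholomorphism and $X_\Om$ is simply connected.
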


\subsection{Construction of $X_\Om$}\label{sec:3.1}

We first define ``skeleton" 
of $\Om$:
\begin{dfn}\label{dfn:3.3}
Let
$
V_\Om\subset
\bigcup_{n=1}^{\infty}
(\C\times\Z)^n
$
be the set of vertices
$$
v\defeq 
((\om_1,\sigma_1),\cdots,(\om_n,\sigma_n))
\in
(\C\times\Z)^n
$$
that satisfy the following conditions:
\begin{enumerate}
\item[1)]
$(\om_1,\sigma_1)=(0,0)$
and
$(\om_j,\sigma_j)\in\C\times(\Z\setminus\{0\})$
for $j\geq2$,

\item[2)]
$\om_j\neq\om_{j+1}$
for $j=1,\cdots,n-1$,

\item[3)]
$\om_j \in \ti\Om_{L_j(v)}$ 
with
$
L_{j}(v)\defeq \sum_{i=1}^{j-1}
|\om_{i+1}-\om_i|
$
for $j=2,\cdots,n$.

\end{enumerate}
Let $E_\Om\subset V_\Om\times V_\Om$ be the set of edges
$e=(v',v)$ that satisfy one of the following conditions: 
%i) $\sim$ iii):
\begin{enumerate}
\item[i)]
$v=((\om_1,\sigma_1),\cdots,(\om_{n},\sigma_{n}))$
and
$v'=((\om_1,\sigma_1),\cdots,(\om_{n},\sigma_{n}),
(\om_{n+1},\pm 1))$,

\item[ii)]
$v=((\om_1,\sigma_1),\cdots,(\om_{n},\sigma_{n}))$
and
$v'=((\om_1,\sigma_1),\cdots,(\om_{n},\sigma_{n}+1))$ 
with $\sigma_n\geq1$,

\item[iii)]
$v=((\om_1,\sigma_1),\cdots,(\om_{n},\sigma_{n}))$
and
$v'=((\om_1,\sigma_1),\cdots,(\om_{n},\sigma_{n}-1))$ 
with $\sigma_n\leq-1$.

\end{enumerate}
We denote
the directed tree diagram $(V_\Om,E_\Om)$
by $Sk_\Om$
and call it
\emph{skeleton} of $\Om$.
\end{dfn}

\begin{nota}
For
$
v\in
V_\Om
\cap
(\C\times\Z)^n,
$
we set
$\om(v)\defeq \om_n$
and $L(v)\defeq L_n(v)$.
\end{nota}

From the definition of $Sk_\Om$,
we find the following
\begin{lmm}
For each $v\in V_\Om\setminus\{(0,0)\}$,
there exists a unique vertex $v_\upa\in V_\Om$
such that $(v,v_\upa)\in E_\Om$.
\end{lmm}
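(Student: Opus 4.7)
The plan is a direct case analysis on the last coordinate of
$v = ((\om_1,\sigma_1),\ldots,(\om_n,\sigma_n)) \in V_\Om \setminus \{(0,0)\}$.
By condition 1) of Definition~\ref{dfn:3.3}, the first entry of every vertex is $(0,0)$, so the assumption $v \neq (0,0)$ forces $n \geq 2$, and the same condition forces $\sigma_n \in \Z \setminus \{0\}$.

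To produce $v_\upa$, I would match $v$ with the first coordinate of the pair $(v, v_\upa) \in E_\Om$, i.e.\ place $v$ in the role of ``$v'$'' and $v_\upa$ in the role of ``$v$'' in each of the three edge types (i)--(iii) of Definition~\ref{dfn:3.3}. Read from this side, type (i) requires $\sigma_n = \pm 1$ and produces
$v_\upa = ((\om_1,\sigma_1),\ldots,(\om_{n-1},\sigma_{n-1}))$;
type (ii) requires $\sigma_n \geq 2$ and produces
$v_\upa = ((\om_1,\sigma_1),\ldots,(\om_n,\sigma_n - 1))$;
type (iii) requires $\sigma_n \leq -2$ and produces
$v_\upa = ((\om_1,\sigma_1),\ldots,(\om_n,\sigma_n + 1))$.
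Since the three conditions $\sigma_n = \pm 1$, $\sigma_n \geq 2$, $\sigma_n \leq -2$ partition $\Z \setminus \{0\}$, exactly one edge type can apply to the given $v$, and within that type $v_\upa$ is determined by $v$; this handles uniqueness once existence is verified.

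For existence I would check that the candidate $v_\upa$ actually belongs to $V_\Om$. In cases (ii) and (iii) the sequence of $\om_j$'s is unchanged, so $L_j(v_\upa) = L_j(v)$ for every $j$, and conditions 1)--3) of Definition~\ref{dfn:3.3} carry over directly from $v$ (the new last $\sigma$-coordinate is still nonzero because $|\sigma_n| \geq 2$). In case (i) the candidate is the truncation of $v$ to its first $n-1$ entries; conditions 2) and 3) survive trivial restriction, and condition 1) is inherited, with the degenerate subcase $n=2$ producing $v_\upa = ((0,0))$, which is in $V_\Om$ by construction.

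No serious obstacle arises here: the lemma is a combinatorial bookkeeping statement certifying that $Sk_\Om$ is a rooted tree in which every non-root vertex has a unique parent edge. The only point requiring care is to read the ordered pair in $E_\Om$ in the correct direction, so that the three edge types of Definition~\ref{dfn:3.3}, viewed from the ``child'' $v$, are seen to be mutually exclusive and jointly exhaustive according to the sign and magnitude of $\sigma_n$.
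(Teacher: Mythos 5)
Your proof is correct and is exactly the direct verification from Definition~\ref{dfn:3.3} that the paper treats as immediate (the lemma is stated without proof, prefaced only by ``From the definition of $Sk_\Om$, we find the following''). You correctly read the ordered pair $(v,v_\upa)$ with $v$ in the ``$v'$'' slot, observe that the conditions $\sigma_n=\pm1$, $\sigma_n\ge2$, $\sigma_n\le-2$ partition $\Z\setminus\{0\}$, and check that the resulting candidate parent satisfies conditions 1)--3), so nothing is missing.
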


To each $v\in V_\Om$ we assign a cut plane~$U_v$, defined as the open set 
\[
U_v
\defeq 
\C\setminus
\Big(C_v\cup
\bigcup_{v'\underset{\rm i}{\to} v}C_{v'\to v}
\Big),
\]
where $\displaystyle
\bigcup_{v'\underset{\rm i}{\to} v}
$
is the union over all the vertices $v'\in V_\Om$ that have
an edge $(v',v)\in E_\Om$ of type i),
\vspace{-3ex}
\begin{align*}
&C_v \defeq \left\{ \begin{aligned}
&\O &\text{when $v=(0,0),$}
\\[.7ex]
&\{\om_n - s(\om_n-\om_{n-1}) \mid s\in\R_{\geq0}\}
&\text{when $v\neq(0,0),$}
\end{aligned} \right.\\[1.3ex]
&C_{v'\to v} \defeq 
\{ \om_{n+1}+s(\om_{n+1}-\om_n) \mid s\in\R_{\geq0}\}.
\end{align*}
We patch the~$U_v$'s along the cuts according to the following rules:

\begin{figure}
\centering
\includegraphics[scale=0.4]{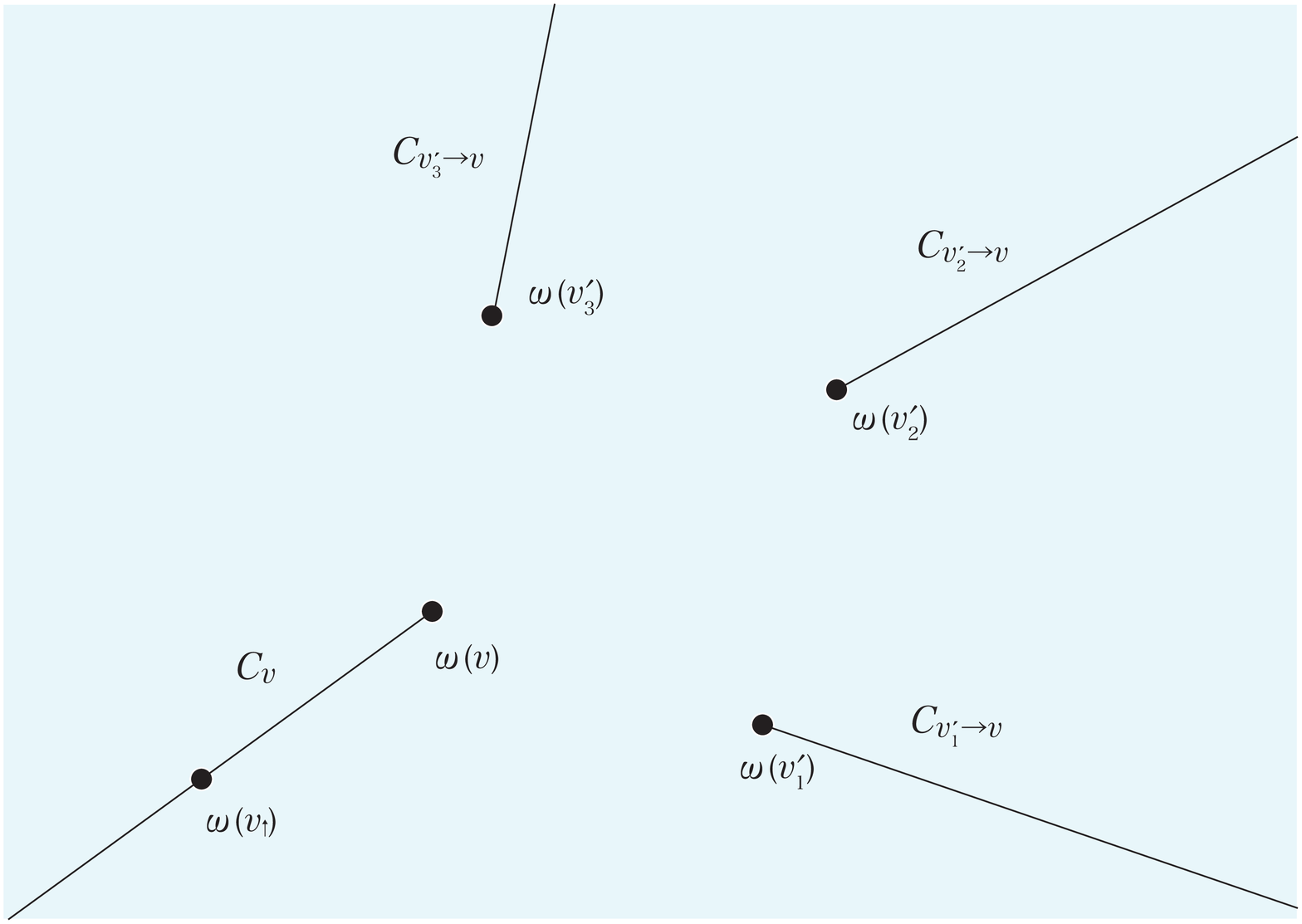}
\caption{The set $U_v$.}
\end{figure}

Suppose first that $(v',v)$ is an edge of type i), with $v'=(v,(\om_{n+1},\sigma_{n+1}))
\in V_\Om$.
To it, we assign a line segment or a half-line~$\ell_{v'\to v}$ as follows:
If there exists $u=(v,(\om'_{n+1},\pm 1))\in V_\Om$ such that
$\om'_{n+1}\in C_{v'\to v}\setminus\{\om_{n+1}\}$, take
$u^{(0)}=(v,(\om^{(0)}_{n+1},\pm 1))\in V_\Om$ so that
$|\om^{(0)}_{n+1}-\om_{n+1}|$ gives the minimum of
$|\om'_{n+1}-\om_{n+1}|$ for such vertices and assign an open line segment
$ \ell_{v'\to v} \defeq \{ \om_{n+1}+s(\om^{(0)}_{n+1}-\om_{n+1}) \ |\
s\in(0,1) \} $
to $(v',v)$.  
Otherwise, we assign the open half-line
$ \ell_{v'\to v} \defeq C_{v'\to v}\setminus\{\om_{n+1}\} $ to $(v',v)$.
Since each $\Om_L$ $(L\geq0)$ is finite, we can take a connected
neighborhood $ U_{v'\to v} $ of $ \ell_{v'\to v} $ so that
\begin{equation}\label{3.1}
U_{v'\to v}\setminus\ell_{v'\to v} =
U^+_{v'\to v}\cup U^-_{v'\to v}
\quad
\text{and}
\quad
U^{\pm}_{v'\to v}\subset U_v\cap U_{v'},
\end{equation}
where
$$
U^{\pm}_{v'\to v} \defeq 
\{\ze\in U_{v'\to v} \mid
\pm{\rm Im}(\ze\cdot\overline{\ze'})>0
\ \text{for}\ \ze'\in\ell_{v'\to v}
\}.
$$
Then, if $\sig_{n+1}=1$, we glue $U_v$ and $U_{v'}$ along $U^-_{v'\to v}$,
whereas if $\sig_{n+1}=-1$ we glue them along $U^+_{v'\to v}$.

% Then, we glue $U_v$ and $U_{v'}$ by $U_{v'\to v}$ as follows: 
% %
% When $\sigma_{n+1}=1$, we identify
% $ U^{+}_{v'\to v}\subset U_{v'\to v} $ and
% $ U_{v'}\cap U^{+}_{v'\to v}\subset U_{v'} $ (resp.,
% $ U^{-}_{v'\to v}\subset U_{v'\to v} $ and
% $ U_{v}\cap U^{-}_{v'\to v}\subset U_{v} $ ). 
% %
% When $\sigma_{n+1}=-1$, we identify
% $ U^{-}_{v'\to v}\subset U_{v'\to v} $ and
% $ U_{v'}\cap U^{-}_{v'\to v}\subset U_{v'} $ (resp.,
% $ U^{+}_{v'\to v}\subset U_{v'\to v} $ and
% $ U_{v}\cap U^{+}_{v'\to v}\subset U_{v} $ ).

%
%
\begin{figure}
\centering
\includegraphics[scale=0.4]{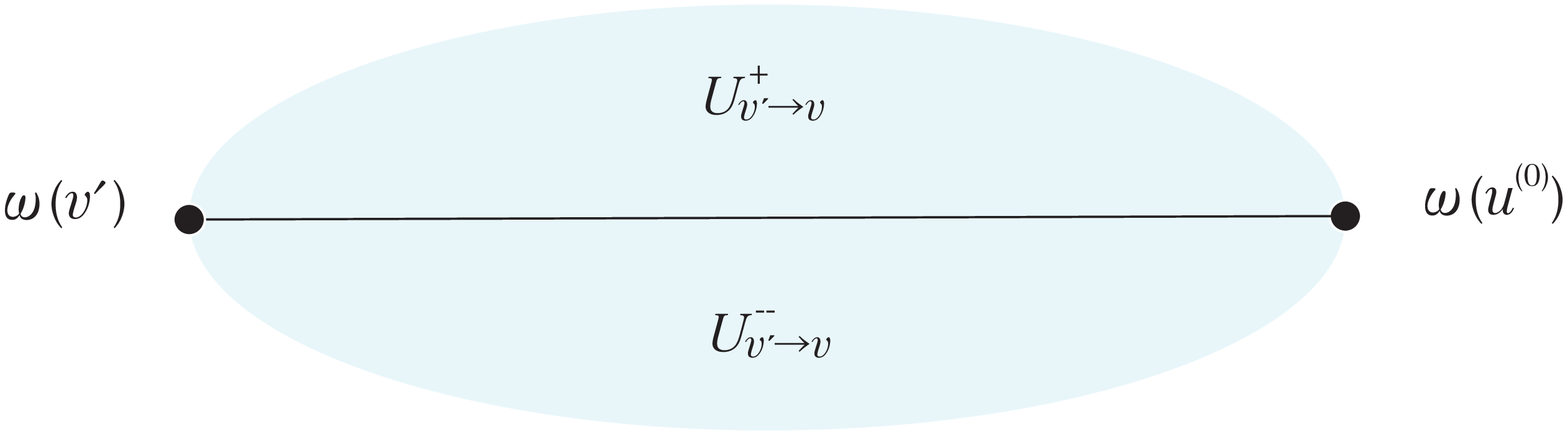}
\caption{The set $U_{v'\to v}$.}
\end{figure}
Suppose now that $(v',v)$ is an edge ot type ii) and iii).
As in the case of i), if there exists
$u=(v,(\om'_{n+1},\pm 1))\in V_\Om$ such that
$\om'_{n+1}\in C_{v}\setminus\{\om_{n}\}$,
then we take $u^{(0)}=(v,(\om^{(0)}_{n+1},\pm 1))\in V_\Om$
so that $|\om^{(0)}_{n+1}-\om_{n}|$ is minimum
and assign
$
\ell_{v'\to v} \defeq
\{
\om_{n}+s(\om^{(0)}_{n+1}-\om_{n})
\ |\ 
s\in(0,1)
\}
$
to $(v',v)$.
Otherwise, we assign
$
\ell_{v'\to v} \defeq C_v\setminus\{\om_{n}\}
$
to $(v',v)$.
Then, we take a connected neighborhood 
$U_{v'\to v}$ of $\ell_{v'\to v}$
satisfying \eqref{3.1},
and glue~$U_v$ and~$U_{v'}$ along $U^-_{v'\to v}$ in case~ii), 
and along $U^+_{v'\to v}$ in case~iii).

% For the case ii), we identify 
% $U^{+}_{v'\to v}\subset U_{v'\to v}$
% and $U_{v'}\cap U^{+}_{v'\to v}\subset U_{v'}$
% (resp., 
% $U^{-}_{v'\to v}\subset U_{v'\to v}$
% and $U_{v}\cap U^{-}_{v'\to v}\subset U_{v}$).
% %
% For the case iii), we identify 
% $U^{-}_{v'\to v}\subset U_{v'\to v}$
% and $U_{v'}\cap U^{-}_{v'\to v}\subset U_{v'}$
% (resp., 
% $U^{+}_{v'\to v}\subset U_{v'\to v}$
% and $U_{v}\cap U^{+}_{v'\to v}\subset U_{v}$).

%
%
Patching the $U_v$'s and the $U_{v'\to v}$'s according to the above rules,
we obtain a Riemann surface~$X_\Om$, in which we denote by $\un0_\Om$
the point corresponding to $0\in U_{(0,0)}$.
The map $\fp_\Om \col X_\Om\to\C$
is naturally defined using
local coordinates
$U_v$ and $U_{v'\to v}$.

Let $\un U{}_e$, $\un\ell{}_e$ $(e\in E_\Om)$
and
$\un U{}_v$ $(v\in V_\Om)$
respectively denote
the subsets of $X_\Om$
defined by 
$U_e$, $\ell_e$
and $U_v$.
Notice that each $\un\ze\in X_\Om$
belongs to one of the $\un\ell{}_e$'s
or $\un U{}_v$'s ($e\in E_\Om$ or $v\in V_\Om$).
Therefore, we have the following decomposition of $X_\Om$:
$$
X_\Om=
\bigsqcup_{v\in V_\Om}
\un U{}_v
\sqcup
\bigsqcup_{e\in E_\Om}
\un \ell{}_e.
$$
\begin{dfn}
We define a function
$L:X_\Om\to \Rp$
by the following formula:
$$
L(\un\ze)\defeq 
L(v)+|\fp(\un\ze)-\om(v)|
\quad
\text{when}
\quad
\un\ze\in \un U{}_v \sqcup \un \ell{}_{v\to v_\upa}.
$$
We call $L(\un\ze)$ \emph{the canonical distance}
of $\un\ze$ from $\un0_\Om$.
\end{dfn}
We obtain from the construction of $L$
the following
\begin{lmm}
The function $L:X_\Om\to \Rp$
is continuous and satisfies
the following inequality
for every $\ga\in\Pi_\Om :$
$$
L(\un\ga(t))
\leq
L(\ga_{|t})
\quad
\text{for}
\quad
t\in[0,1].
$$
\end{lmm}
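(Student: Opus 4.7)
I would prove continuity of $L$ first, then derive the inequality via a local Lipschitz estimate followed by absolute continuity. For continuity, on every open chart $\un U_v$ the formula $L(\un\ze) = L(v)+|\fp_\Om(\un\ze)-\om(v)|$ is continuous because $\fp_\Om$ is. At a cut point $\un\ze_0\in\un\ell_e$ for an edge $e=(v,v_\upa)\in E_\Om$, a sufficiently small neighborhood of $\un\ze_0$ in $X_\Om$ lies in $\un U_v\cup\un\ell_e\cup\un U_{v_\upa}$, where $L$ is given by $L(v)+|\fp_\Om-\om(v)|$ on one side and by $L(v_\upa)+|\fp_\Om-\om(v_\upa)|$ on the other, and continuity reduces to checking that the two expressions agree on $\un\ell_e$. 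For a type-i) edge, any point on the cut satisfies $\fp_\Om(\un\ze)=\om(v)+s(\om(v)-\om(v_\upa))$ with $s\ge 0$, and the required equality collapses to $L(v)=L(v_\upa)+|\om(v)-\om(v_\upa)|$, which is immediate from the recursive definition of $L_n(v)$. For a type-ii) or iii) edge, $\om(v)=\om(v_\upa)$ and $L(v)=L(v_\upa)$, so both formulas coincide identically.

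For the inequality, set $f(t)\defeq L(\un\ga(t))$; it is continuous with $f(0)=0=L(\ga_{|0})$. I claim $f$ is locally Lipschitz on $[0,1]$: each $t_0$ has a neighborhood $I$ such that $\un\ga(I)$ lies in a single open chart $\un U_v$ or in a small glued neighborhood of a cut $\un\ell_e$. In the first case $f(s) = L(v)+|\ga(s)-\om(v)|$ on $I$, clearly Lipschitz in~$s$. In the second case $f$ is piecewise given by expressions of the form $L(w)+|\fp_\Om-\om(w)|$, each $1$-Lipschitz in $\fp_\Om$, and the two agree on the cut; breaking any $[t,s]\subset I$ at the times $\un\ga$ crosses the cut and applying the triangle inequality piecewise still yields $|f(s)-f(t)|\le L(\ga|_{[t,s]})$. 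Compactness of $[0,1]$ then upgrades this to Lipschitz continuity of $f$, hence to absolute continuity.

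At almost every $t\in[0,1]$ both $f'(t)$ and $\ga'(t)$ exist. If $\un\ga(t)\in\un U_v$, then locally $f(s)=L(v)+|\ga(s)-\om(v)|$, and the elementary bound for the derivative of a modulus gives $|f'(t)|\le|\ga'(t)|$. If $\un\ga(t)\in\un\ell_e$, I compare with $g_1(s)\defeq L(v)+|\ga(s)-\om(v)|$ and $g_2(s)\defeq L(v_\upa)+|\ga(s)-\om(v_\upa)|$: both satisfy $|g_i'(t)|\le|\ga'(t)|$, and since $f$ coincides with $g_1$ or $g_2$ on each side of the cut with $g_1(t)=g_2(t)=f(t)$, the existence of $f'(t)$ forces $|f'(t)|\le|\ga'(t)|$. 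Integrating yields $f(t)=\int_0^t f'(s)\,\dd s \le \int_0^t|\ga'(s)|\,\dd s = L(\ga_{|t})$. The main obstacle is the case in which $\un\ga$ spends positive measure on a cut, which is handled precisely by this last comparison, since either of the two agreeing formulas computes $f$ on the cut itself.
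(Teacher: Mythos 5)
Your argument is correct, and it supplies details that the paper simply omits (the lemma is stated there as an immediate consequence of the construction, with no written proof). The two key verifications are exactly the right ones: (a) the two chart formulas for $L$ agree on each cut --- for a type-i) edge because the cut lies on the ray from $\om(v_\upa)$ through $\om(v)$, so $|\ze-\om(v_\upa)|=|\ze-\om(v)|+|\om(v)-\om(v_\upa)|$ there, and for types ii)/iii) because $\om(v)=\om(v_\upa)$ and $L(v)=L(v_\upa)$; and (b) $L$ does not increase faster than arc length along $\ga$. One step is phrased more loosely than it should be: ``breaking $[t,s]$ at the times $\un\ga$ crosses the cut'' is problematic when the crossing set is infinite or the path lies on the cut for a set of positive measure. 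The clean statement, which you essentially have, is that on a small convex sub-chart around a cut point the function $L$ is $1$-Lipschitz \emph{with respect to the base coordinate} $\fp_\Om$, because it is obtained by gluing two $1$-Lipschitz functions of $\ze$ along a chord on which they agree (any segment joining the two sides meets the chord, and the triangle inequality through that intersection point does the rest). Once you have this $1$-Lipschitz property on every chart, you can in fact bypass Rademacher and the a.e.\ derivative comparison entirely: cover $\un\ga([0,t])$ by finitely many charts, subdivide $[0,t]$ accordingly, and sum the estimates $|L(\un\ga(t_{k+1}))-L(\un\ga(t_k))|\le|\ga(t_{k+1})-\ga(t_k)|\le L(\ga|_{[t_k,t_{k+1}]})$ together with $L(\un\ga(0))=0$. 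Your derivative-comparison route is also valid (the argument that $f'(t)\in\{g_1'(t),g_2'(t)\}$ at a.e.\ cut time is sound, since $\un\ga(t)$ avoids the excluded singular endpoints so each $g_i$ is differentiable there), but it is more machinery than the statement requires.
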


We now show the fundamental properties of $X_\Om$.
\begin{lmm}
The Riemann surface $X_\Om$ constructed above
is simply connected.
\end{lmm}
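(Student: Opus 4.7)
The plan is to exploit the rooted-tree structure of $Sk_\Om$ (every non-root vertex has a unique parent $v_\upa$, by the lemma preceding the statement) and to prove by induction on finite subtrees that suitable finite unions of pieces $\un U_v$ and $\un U_e$ are simply connected; a compactness argument will then reduce the general case to this finite-subtree case.

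For each finite connected subtree $T$ of $Sk_\Om$ containing the root $(0,0)$, let $X_T \subseteq X_\Om$ be the union of the open patches $\un U_v$ for $v \in T$ together with the gluing neighborhoods $\un U_e$ for every edge $e$ of $T$. I claim that $X_T$ is open, connected, and simply connected, by induction on $\#T$. The base case $T = \{(0,0)\}$ is clear since $\un U_{(0,0)}$ is biholomorphic to $\C$ via $\fp_\Om$. For the inductive step, write $T = T' \sqcup \{v\}$ with $v$ a leaf attached to its parent $v_\upa \in T'$ by an edge $e$, and set $Y_v \defeq \un U_v \cup \un U_e$. Since $U_v$ is the complement in $\C$ of finitely many disjoint closed half-lines extending to infinity, it is simply connected, and $Y_v$, obtained from $\un U_v$ by adjoining a simply connected tubular neighborhood of one of these cuts, remains simply connected. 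By the tree property, $v$ is adjacent in $Sk_\Om$ to no other vertex of $T$ besides $v_\upa$, and provided the neighborhoods $U_{v'\to v}$ of Section~\ref{sec:3.1} are taken thin enough, one has $X_{T'} \cap Y_v = \un U_e$, a connected and simply connected open tubular neighborhood of $\un \ell_e$. Applying Seifert--van Kampen to the open cover $X_T = X_{T'} \cup Y_v$ then yields
\[
\pi_1(X_T) \cong \pi_1(X_{T'}) *_{\pi_1(\un U_e)} \pi_1(Y_v) = 1.
\]

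To conclude, let $\ga \col [0,1] \to X_\Om$ be any loop based at $\un 0_\Om$. Its image is compact and hence, in view of the disjoint decomposition $X_\Om = \bigsqcup_{v \in V_\Om} \un U_v \sqcup \bigsqcup_{e \in E_\Om} \un \ell_e$, meets only finitely many pieces. Letting $T$ be the smallest connected subtree of $Sk_\Om$ containing the root $(0,0)$ together with all these finitely many vertices (plus the endpoints of each edge met), one obtains a finite subtree such that $\ga$ lies in $X_T$. By the induction, $\pi_1(X_T) = 1$, so $\ga$ is null-homotopic in $X_\Om$, proving simple connectedness.

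The main obstacle is the careful verification of the three simple-connectedness assertions used above: that each cut plane $\un U_v$ is simply connected (which relies on the cuts of $U_v$ being pairwise disjoint closed half-lines all reaching infinity, hence leaving no ``bounded hole''); that the gluing neighborhood $\un U_e$ can be arranged to be simply connected (a thin tubular neighborhood of $\un \ell_e$ in $X_\Om$ suffices); and that $X_{T'} \cap Y_v$ reduces exactly to $\un U_e$, so that the tree combinatorics of $Sk_\Om$ is faithfully reflected in the topology of $X_\Om$ and distinct patches overlap only through an edge. These verifications require careful bookkeeping with the cutting and gluing recipe of Section~\ref{sec:3.1}, in particular with the choice of the line segments $\ell_{v'\to v}$ and their tubular neighborhoods $U_{v'\to v}$.
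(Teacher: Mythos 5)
Your argument is correct and follows essentially the same route as the paper's proof: reduce by compactness to a finite connected subdiagram of $Sk_\Om$, then run an inductive van Kampen argument exploiting the acyclicity of the tree and the simple connectedness of the individual patches and gluing neighborhoods. (One minor slip: $\un U_{(0,0)}$ is not all of $\C$ but $\C$ minus the half-line cuts $C_{v'\to(0,0)}$ coming from type-i) edges into the root; it is still simply connected, so your base case stands.)
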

\begin{proof}
%[Proof of the simply connectedness of $X_\Om$]
We first note that,
since $Sk_\Om$ is connected,
$X_\Om$ is path-connected.
Let $\un\ga:[0,1]\to X_\Om$ be a path such that
$\un\ga(0)=\un\ga(1)$.
Since the image of $\un\ga$ is compact set in $X_\Om$,
we can take finite number of vertices 
$\{v_j\}_{j=1}^{p}\subset V_\Om$
and
$\{e_j\}_{j=1}^q\subset E_\Om$
so that $v_1=(0,0)$ and
the image of $\un\ga$
is covered by $\{\un U{}_{v_j}\}_{j=1}^p$ and 
$\{\un U{}_{e_j}\}_{j=1}^q$.
Since each of 
$\{v_j\}_{j=2}^{p}$
and
$\{e_j\}_{j=1}^q$
has a path to $v_1$
that contains it,
interpolating finite number of
the vertices and the edges if necessary,
we may assume that the diagram $\wt{Sk}$
defined by
$\{v_j\}_{j=1}^{p}$
and
$\{e_j\}_{j=1}^q$
are connected in $Sk_\Om$.
Now, let $\un U$ be the union of
$\{\un U{}_{v_j}\}_{j=1}^p$ and 
$\{\un U{}_{e_j}\}_{j=1}^q$.
Since all of the open sets
are simply connected
and $\wt{Sk}$ is acyclic,
we can inductively confirm
using the van Kampen's theorem
that $\un U$ is simply connected.
Therefore, the path
$\un\ga$ is contracted to the point $\un0_\Om$.
It proves the simply connectedness of $X_\Om$.
\end{proof}
\begin{lmm}
The Riemann surface $X_\Om$ constructed above
is $\Om$-endless.
\end{lmm}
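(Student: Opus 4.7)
Proof plan. My approach is to establish the $\Om$-endlessness of $X_\Om$ by the standard continuation-along-a-path argument. Given any $\ga \in \Pi_\Om$, I would set
$$
T \defeq \{\, t \in [0,1] \mid \ga_{|[0,t]} \text{ admits a continuous lift } \uga_{|[0,t]} \col [0,t] \to X_\Om \text{ with } \uga(0) = \uO_\Om \,\}.
$$
Since $\fp_\Om$ is a local biholomorphism, such a lift is automatically unique whenever it exists, and clearly $0 \in T$. To conclude $T = [0,1]$, it then suffices to show that $T$ is both open and closed in $[0,1]$.

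Openness of $T$ is immediate: if $t_0 \in T$ and $\uga(t_0) \in \wt W$ for an open neighborhood $\wt W \subset X_\Om$ on which $\fp_\Om$ restricts to a biholomorphism onto an open $W \subset \C$, then for $t$ close to $t_0$ we have $\ga(t) \in W$, and one extends the lift by pulling back via $(\fp_\Om|_{\wt W})\ii \circ \ga$.

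For closedness, suppose $t_n \uparrow t_\infty$ with $t_n \in T$; I would prove $t_\infty \in T$ through the following reduction. Using Lemma~\ref{lemstructOm}, I partition $[0,t_\infty]$ into finitely many subintervals $J_1,\ldots,J_N$ on each of which $L(\ga_{|t})$ stays in a single band $[L_k,L_{k+1})$; there are only finitely many such bands because $L(\ga_{|t_\infty}) < \infty$. On $J_k$, the characterization of $\Om$-allowedness in Lemma~\ref{lemOmallowedness} ensures that $\ga(t) \in \C \setminus \ti\Om_{L_k}$, i.e.\ $\ga$ avoids a fixed finite set of singular points. Consequently, the partial lift on $J_k$ only traverses sheets $\un U_v$ whose vertices $v \in V_\Om$ have canonical distance $L(v) \le L_{k+1}$, and only finitely many such vertices are reachable from $\uO_\Om$ by a path of bounded length in $Sk_\Om$. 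Compactness of $J_k$ together with this finiteness will imply that the lift consists of finitely many continuous arcs linked by finitely many cut transitions, whose concatenation is a continuous extension up to $t_\infty$.

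The main obstacle is verifying that every time $\ga$ crosses a cut of the current sheet $\un U_v$, the prescribed transition corresponds to an edge that is actually present in $Sk_\Om$. Concretely, if $\uga(t) \in \un U_v$ with $v=((\om_1,\sig_1),\ldots,(\om_n,\sig_n))$ and $\ga$ meets a cut $C_{v'\to v}$ at a non-singular point $\ze_0$, I must check that the target vertex $v' = (v,(\om_{n+1},\pm 1))$ lies in $V_\Om$, i.e.\ that $\om_{n+1} \in \ti\Om_{L(v)+|\om_{n+1}-\om_n|}$. This is precisely how the cuts of $U_v$ were placed in Section~\ref{sec:3.1}; the analogous check for the cut $C_v$ (edges of types ii and iii) is identical. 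The complementary role of the $\Om$-allowed hypothesis $\ga(t) \notin \ti\Om_{L(\ga_{|t})}$ is to guarantee that $\ga$ never passes through a singular point at a time when that point has become ``active'' for the current sheet, so every cut crossing takes place at a non-endpoint of $\ell_{v'\to v}$ and the lift extends unambiguously into the neighboring sheet $\un U_{v'}$ via the gluing rules laid down in Section~\ref{sec:3.1}.
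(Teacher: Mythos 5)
Your open/closed (equivalently, maximal-interval) framework is the same skeleton as the paper's argument, and the openness step is fine; but the closedness step, which is where all the substance lies, has a genuine gap. First, the finiteness claim is wrong as stated: the set of vertices $v\in V_\Om$ with $L(v)\le L_{k+1}$ is in general \emph{infinite}, because the edges of types ii) and iii) change only the winding index $\sig_n$ and leave $L(v)$ unchanged, so infinitely many vertices sit over the same base sequence $(\om_1,\ldots,\om_n)$. To get finiteness you must use that $\ga\in\Pi_\Om^{\de,L}$ for some $\de,L>0$ (which holds since $\cM_\Om$ is open and $\ti\ga([0,1])$ is compact): winding once more around a point of $\ti\Om$ while keeping $\dist(\ti\ga(t),\cS_\Om)\ge\de$ costs at least an extra length of order $\de$, which is exactly what the paper's quantity $L^\de(v)=L_n(v)+\sum(\abs{\sig_j}-1)\de$ and the finite set $V_\Om^{\de,L}$ encode. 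You never invoke the $\de$-separation, so this count cannot be repaired within your argument as written.

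Second, and more importantly, the sentence ``compactness of $J_k$ together with this finiteness will imply that the lift consists of finitely many continuous arcs linked by finitely many cut transitions'' is precisely the assertion that needs proof, and it is not a consequence of what precedes it: a Lipschitz path of finite length may cross a cut infinitely many times, and the real issue is whether $\uga(t)$ converges in $X_\Om$ as $t\upa t_\infty$, i.e.\ whether the tail of the path eventually stays inside a \emph{single} chart on which $\fp_\Om$ is injective. The paper supplies the missing quantitative ingredient: using $\ga\in\Pi_\Om^{\de,L}$ it confines the partial lift to a finite union of compact truncated charts $U_v^{\de,L,\eps}$, $U_{v\to v_\upa}^{\de,L,\eps}$, and then chooses a uniform $\eps>0$ such that every point of these compacts carries a full $\eps$-disc contained in one chart; picking $b$ with $L(\ga_{|t_\infty})-L(\ga_{|b})<\eps$, the remaining arc $\ga|_{[b,t_\infty]}$ lies in $D_{\ga(b)}^\eps$, hence in one chart, and the lift extends. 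Your proposal also silently relies on the canonical-distance inequality $L(\uga(t))\le L(\ga_{|t})$ when asserting that the cuts encountered in the current sheet are controlled by the sets $\ti\Om_{L(\ga_{|t})}$; that inequality should be cited explicitly. Without the uniform $\eps$ and the corrected finiteness, the extension of the lift to $t_\infty$ is not established.
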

\begin{proof}
Take an arbitrary $\Om$-allowed path $\ga$
and $\de,L>0$
so that $\ga\in\Pi_\Om^{\de,L}$.
Let $V_\Om^{\de,L}$ denote the set of vertices
$v=((\om_1,\sigma_1),\cdots,(\om_{n},\sigma_{n}))\in V_\Om$ 
that satisfy
$$
L^\de(v)\defeq 
L_n(v)+
\sum_{j=2}^{n}(|\sigma_j|-1)\de
\leq L
$$
and set 
$E_\Om^{\de,L}
\defeq \{(v,v_\upa)\in E_\Om \mid
v\in V_\Om^{\de,L}\}$. 
Notice that $V_\Om^{\de,L}$ and $E_\Om^{\de,L}$
are finite.
We set
for $\eps>0$ and
$v\in V_\Om^{\de,L}$
$$
U_v^{\de,L,\eps}\defeq 
\{\ze\in U_v
\mid
\inf_{(v',v)\in E_\Om}|\ze-\om(v')|\geq\de,\ 
D_\ze^\eps\subset U_v
\}
\cap
D_{\om(v)}^{L-L^\de(v)},
$$
where
$
D_\ze^r\defeq \{\ti\ze\in\C\mid|\ti\ze-\ze|\leq r\}
$
for
$\ze\in\C, r>0$.
We also set
for $\eps>0$
and $(v,v_\upa)\in E_\Om^{\de,L}$
$$
U_{v\to v_\upa}^{\de,L,\eps}\defeq 
\{\ze\in U_{v\to v_\upa}
\mid
\min_{j=1,2}|\ze-\ti\om_j|\geq \de,\ 
\inf_{\ti\ze\in \ell_{v\to v_\upa}}|\ze-\ti\ze|\leq \eps
\}
\cap
D_{\om(v)}^{L-L^\de(v_\upa)},
$$
where
$\ti\om_1\defeq \om(v)$
and $\ti\om_2$ is the other endpoint of $\ell_{v\to v_\upa}$
if it exists and $\ti\om_2\defeq \om(v)$ otherwise.
Since
$E_\Om^{\de,L}$ are finite set,
we can take $\eps>0$ sufficiently small
so that
$
D_\ze^\eps
\subset
U_{v\to v_\upa}
$
for all
$
\ze\in
U_{v\to v_\upa}^{\de,L,\eps}
$
and
$(v,v_\upa)\in E_\Om^{\de,L}$.
We fix such a number
$\eps>0$.

Now,
let $I$ be the maximal interval such that
the restriction of
$\ga$ to $I$
has a lift $\un\ga$ on $X_\Om$.
Obviously,
$I\neq \O$ and $I$ is open.
Assume that
$I=[0,a)$ for $a\in(0,1]$.
We take $b\in (0,a)$ so that
$L(\ga_{|a})-L(\ga_{|b})< \eps$.
Then, notice that,
since $\ga\in\Pi_\Om^{\de,L}$ and
$\ga_{|b}$ has a lift on $X_\Om$,
$\un\ga(b)$ is in $\un U{}_v^{\de,L,\eps}$ for
$v\in V_\Om^{\de,L}$ or
$\un U{}_e^{\de,L,\eps}$ for
$e\in E_\Om^{\de,L}$.
Since
$D_{\ga(b)}^\eps\subset U_v$
(resp., $D_{\ga(b)}^\eps\subset U_e$)
when 
$\un\ga(b)\in \un U{}_v^{\de,L,\eps}$
(resp., $\un\ga(b)\in \un U{}_e^{\de,L,\eps}$),
we obtain a lift of $\ga|_{[0,a]}$
by concatenating $\un\ga{}_{|b}$ and
$\ga|_{[b,a]}$ in the coordinate.
It contradicts the maximality of $I$,
and hence, $I=[0,1]$.
\end{proof}

\subsection{Proof of Theorem \ref{lemuniversalXOm}}
We first show the following:
\begin{lmm}\label{lmm:3.8}
For all $\eps>0$ and
$\un\ze\in X_\Om$,
there exists an $\Om$-allowed path 
$
\ga
$ 
such that $L(\ga)<L(\un\ze)+\eps$ and
its lift $\uga$ on $X_\Om$ satisfies
$\uga(0)=\un0_\Om$ and $\uga(1)=\un\ze$.
\end{lmm}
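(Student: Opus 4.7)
The plan is to construct $\ga$ by induction on the depth $n$ of the vertex $v$ in the tree $Sk_\Om$, where one first reduces to the case $\un\ze \in \un U_v$ with $v = ((\om_1,\sig_1),\ldots,(\om_n,\sig_n))$; the case $\un\ze \in \un\ell_e$ is analogous since $\un\ell_e$ lies in the common boundary of the two adjacent cells $\un U_v$. In this setting $L(\un\ze) = L(v) + |\fp_\Om(\un\ze) - \om_n|$, so the ``ideal spine'' to follow is the polygonal curve in $\C$ joining $0 = \om_1 \to \om_2 \to \cdots \to \om_n \to \fp_\Om(\un\ze)$, of length exactly $L(\un\ze)$; the task is then to thicken it into a nearby $\Om$-allowed path whose lift in $X_\Om$ terminates at $\un\ze$ rather than on a neighboring sheet. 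The inductive statement to carry through is: for every $\eps' > 0$ and every prescribed point close to $\om(v)$ in $\un U_v$, there is an $\Om$-allowed path with length $< L(v) + \eps'$ whose lift ends at that point.

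For the base case $n=1$, i.e.\ $v = (0,0)$, the segment $[0,\fp_\Om(\un\ze)]$ already lies entirely in $U_{(0,0)}$ and is $\Om$-allowed: otherwise some $\om \in \ti\Om_{|\om|}$ would lie on it, which via the equation $\fp_\Om(\un\ze) = (1+s)\om$ for some $s > 0$ would force $\fp_\Om(\un\ze)$ onto the cut $C_{v' \to (0,0)}$ of the child $v' = ((0,0),(\om,\pm 1))$, contradicting $\fp_\Om(\un\ze) \in U_{(0,0)}$; its lift therefore stays in $\un U_{(0,0)}$ and ends at $\un\ze$. For the inductive step, assuming I have a path $\ga_{v_\upa}$ ending at any prescribed point close to $\om(v_\upa) = \om_n$ in $\un U_{v_\upa}$ with length $< L(v_\upa) + \eps/2$, I would prolong it according to the type of the edge $(v, v_\upa)$: in type i), where $v = (v_\upa,(\om_{n+1},\pm 1))$, append a short detour arc of radius $r$ around $\om_n$, then a nearly-straight segment of length close to $|\om_{n+1}-\om_n|$ terminating in the gluing region $U^\mp_{v \to v_\upa}$ next to $\om_{n+1}$, with the sign chosen so that landing there puts the lift in $\un U_v$ (this is exactly prescribed by $\sig_{n+1} = \pm 1$ via the gluing rule); in types ii) and iii), which leave $\om(v) = \om(v_\upa) = \om_n$ unchanged, append a small circular arc around $\om_n$ whose orientation crosses $\ell_{v \to v_\upa}$ into the gluing region of $\un U_v$. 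Taking all the radii small enough keeps the total added length under $\eps/2$, and a final nearly-straight segment inside $\un U_v$ from the current terminus to $\un\ze$ completes $\ga$ with $L(\ga) < L(\un\ze) + \eps$.

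The main obstacle is twofold. First, $\Om$-allowedness: since $L(\ga_{|t}) < L(\un\ze) + \eps$ for every $t$, every potentially problematic point lies in the finite set $F \defeq \ti\Om_{L(\un\ze) + \eps}$; by shrinking the arc radii below the pairwise distances within $F$ and generically perturbing each connecting segment to avoid $F \setminus \{\om_1,\ldots,\om_n\}$, one arranges $\ga\big((0,1]\big) \subset \C \setminus F$, which is amply sufficient. Second, and more delicate, is the lift bookkeeping: one must verify inductively that after the prolongation corresponding to $(v, v_\upa)$ the lift genuinely lands in $\un U_v$. This reduces to a careful matching of the sign conventions in the gluings---in type i) the terminal point of the nearly-straight segment must lie on the $\mp$ side of $\ell_{v \to v_\upa}$ where the identification between $U_{v_\upa}$ and $U_v$ takes place when $\sig_{n+1} = \pm 1$, and in types ii) and iii) the chirality of the small arc around $\om_n$ must match the $\pm 1$ increment of $\sig_n$ through the analogous gluing rule. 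Working through these sign matchings is the principal technical step of the argument.
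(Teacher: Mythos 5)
Your construction is essentially the one in the paper (follow the polygonal spine $0=\om_1\to\cdots\to\om_n\to\fp_\Om(\un\ze)$, whose length is exactly $L(\un\ze)$, and circumvent the branch points by small arcs whose orientation is dictated by the signs $\sig_j$), merely reorganized as an induction along the tree $Sk_\Om$ instead of a single global modification of the spine. The base case and the sign-matching at the genuine vertices $\om_j$ of $v$ are handled correctly.

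The gap is in your treatment of the \emph{auxiliary} singular points lying on the open segments $(\om_j,\om_{j+1})$, i.e.\ the points $\om'$ with $\big(L_j(v)+|\om'-\om_j|,\om'\big)\in\ov\cS_\Om$. You dispose of them by ``generically perturbing each connecting segment to avoid $F\setminus\{\om_1,\ldots,\om_n\}$'', treating them purely as an obstruction to $\Om$-allowedness. But each such $\om'$ is the base point of children $u=(v_j,(\om',\pm1))\in V_\Om$, and the forward ray $C_{u\to v_j}$ issuing from $\om'$ --- which contains the remainder of the segment up to and beyond $\om_{j+1}$ --- is a cut of $U_{v_j}$. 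Consequently the side on which your path passes $\om'$ determines on which side of that cut it arrives in the vicinity of $\om_{j+1}$, and hence whether it can enter the prescribed gluing region $U^{\mp}_{v_{j+1}\to v_j}$ (the one selected by $\sig_{j+1}$) without crossing another cut and changing sheets. A generic perturbation can put the path on the wrong side of some $\om'$, in which case the lift, while still defined and $\Om$-allowed, terminates at a point of $X_\Om$ over $\fp_\Om(\un\ze)$ different from $\un\ze$. This is exactly why the paper's proof refines the spine to the sequence $(\om'_1,\ldots,\om'_{n'})$ and assigns to each intermediate point a sign $\sig'_j$ equal to the sign of the \emph{next genuine} vertex, detouring around it accordingly: the same sign-matching you carry out at the $\om_j$ must be imposed at every auxiliary singular point as well. (A second, minor omission: to keep the perturbed straight portions $\Om$-allowed one needs a small uniform margin $\de$ against $\ov\cS_\Om$ along the whole segment, not merely avoidance of the finite set $F$; the paper extracts this from the local structure of $\ov\cS_\Om$ as a finite union of vertical half-lines over each bounded length range. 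Your crude bound via $F=\ti\Om_{L(\un\ze)+\eps}$ can be repaired to give this, but as stated it only controls the points of the path, not its distance to $\cS_\Om$, which is what membership in $\cM_\Om$ after closure requires.)
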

\begin{proof}
Let $\un\ze\in \un U{}_v$ for 
$
v=((\om_1,\sigma_1),\cdots,(\om_n,\sigma_n)).
$
We consider a polygonal curve
$P_{\un\ze}^0$
obtained by connecting line segments
$[\om_j,\om_{j+1}]$ $(j=1,\cdots,n)$,
where we set
$\om_{n+1}\defeq \fp_\Om(\un\ze)$
for the sake of notational simplicity.
Now, collect all the points $\om_{j,k}$ on $(\om_j,\om_{j+1})$
such that
$(L_{j,k},\om)\in\ov\cS_{\Om}$,
where $L_{j,k}\defeq L_j(v)+|\om_{j,k}-\om_j|$.
Since 
\begin{eq-text}\label{finline}
$
\ov\cS_{\Om}
\cap\{\la\in\Rp\mid 
|\la|\leq L\}\times\C
$
is written for each $L>0$
by the union of
finite number of line segments
of the form
$
\{\la\in\Rp\mid
\ti L\leq\la\leq L\}
\times\{\om\}
$
$(\ti L>0, \om\in\C)$,
\end{eq-text}
such points are finite.
We order $\om_j$ and $\om_{j,k}$
so that $L_j(v)$ and $L_{j,k}$ increase 
along the order and
denote the sequence by
$
(\om'_1,\om'_2,\cdots,\om'_{n'}).
$
We set 
$
L'_{j}\defeq \sum_{i=1}^{j-1}
|\om'_{i+1}-\om'_i|.
$
We extend $v$ to
$
v'=((\om'_1,\sigma'_1),\cdots,(\om'_{n'},\sigma'_{n'}))
$
by setting $\sigma'_j=1$ (resp., $\sigma'_j=-1$)
when
$(\om'_j,L'_j)=(\om_{i,k},L_{i,k})$
for some $i,k$ and $\sigma_{i+1}\geq 1$
(resp., $\sigma_{i+1}\leq -1$).
Then, 
in view of \eqref{finline},
we can take $\de>0$ so that
$$
\{
(L'_j+|\ze'-\om'_j|+\de,\ze')
\ |\ 
\ze'\in(\om'_j,\om'_{j+1})
\}
\cap\ov\cS_{\Om}
=\O,
$$
$$
\{
(L'_j+\de,\ze')
\ |\ 
0<|\ze'-\om'_j|<\de
\}
\cap\ov\cS_{\Om}
=\O
$$
hold for
$j=1,\cdots,n'$.
Let $\om'_{j,-}$ (resp., $\om'_{j,+}$)
be the intersection point of
$[\om'_{j-1},\om'_{j}]$ 
(resp., $[\om'_{j},\om'_{j+1}]$)
and
$
C_{\om'_j}^{\varepsilon'}
\defeq 
\{\ze'\in\C
\ |\ 
|\ze'-\om'_j|=\varepsilon'\}
$
for sufficiently small $\varepsilon'>0$.
We replace the part 
$[\om'_{j,-},\om'_{j}]\cup[\om'_{j},\om'_{j,+}]$ of
$\ell$ with a path
that goes 
anti-clockwise (resp., clockwise)
along 
$
C_{\om'_j}^{\varepsilon'}
$
from $\om'_{j,-}$ to $\om'_{j,+}$
and turns around $\om'_j$
($|\sigma'_j|-1$)-times
when $\sigma'_j\geq1$
(resp., when $\sigma'_j\leq-1$).
Let 
$P_{\un\ze}^{\eps'}$ 
denote a path obtained from 
$P_{\un\ze}^0$
by the modification.
Then, $P_{\un\ze}^{\eps'}$ 
defines an $\Om$-allowed path and
its lift $\un P_{\un\ze}^{\eps'}$ 
on $X_\Om$ satisfies the conditions.
Further,
by taking $\eps'$ sufficiently small
so that 
$
2\pi\eps'
\sum_{j=2}^{n'}|\sigma'_j|
<\eps,
$
we find
$L(P_{\un\ze}^{\eps'})<L(\un\ze)+\eps$, hence one can take $\ga = P_{\un\ze}^{\eps'}$.
\begin{figure}
\centering
\includegraphics[scale=0.4]{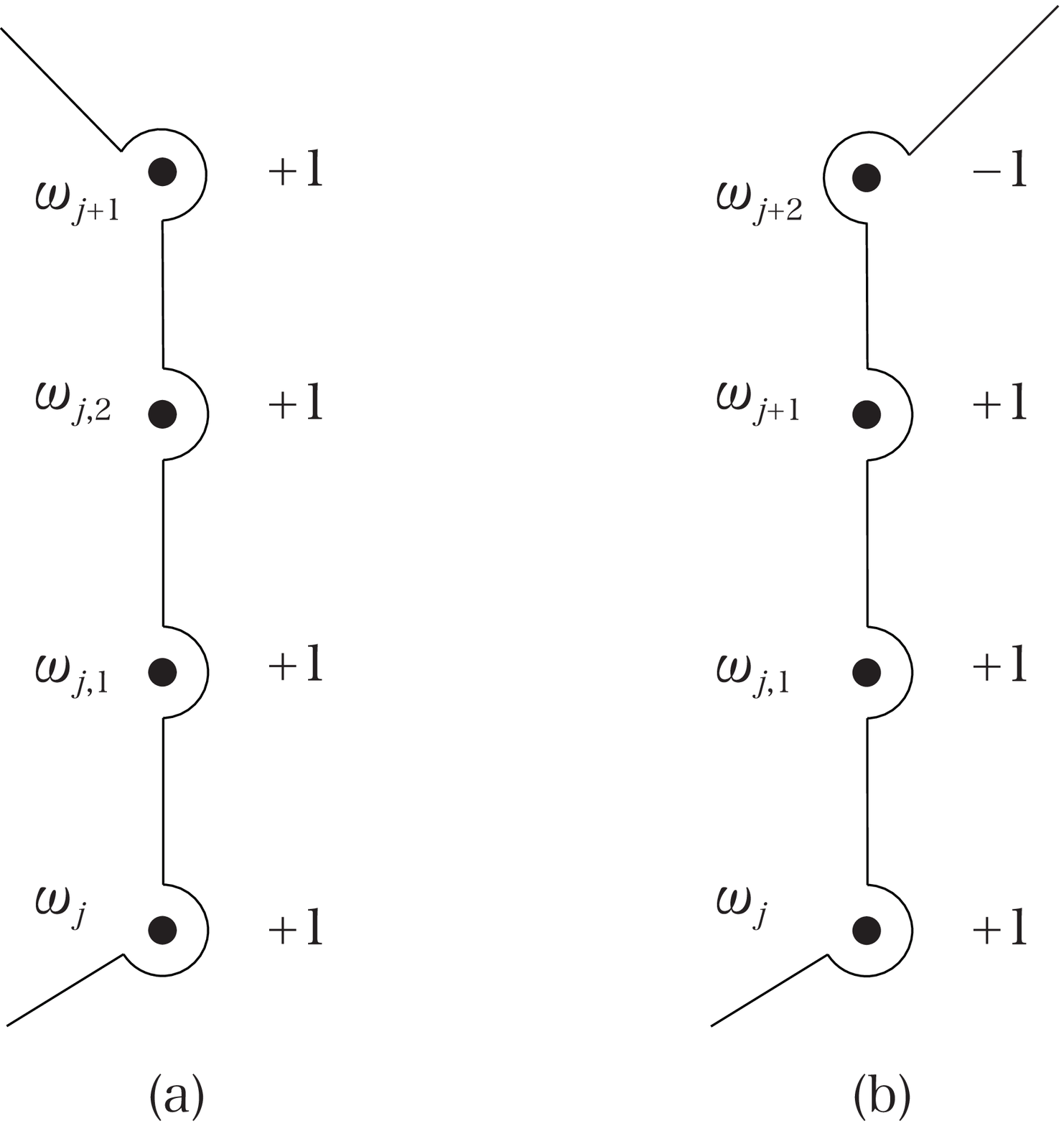}
\caption{}
\end{figure}
When $\un\ze\in \un\ell{}_{e}$ 
for an edge $e=(v,v_\upa)\in E_\Om$,
we can construct such a path 
$P_{\un\ze}^{\eps'}\in\Pi_\Om$
by totally the same discussion.
\end{proof}

Notice that,
since the sequence $v'$
in the proof of Lemma \ref{lmm:3.8}
is uniquely determined by $\un\ze\in X_\Om$,
the choice of the path
$P_{\un\ze}^{\eps'}$
depends only on the radius $\eps'$
of the circles
$
C_{\om'_j}^{\eps'}.
$
Further,
from the construction of the path $P_{\un\ze}^{\eps'}$,
we can extend Lemma \ref{lmm:3.8} as follows:
\begin{lmm}\label{lmm:3.9}
For all $\eps>0$ and
$\un\ze\in X_\Om$,
there exist 
a neighborhood $\un U_{\un\ze}$ of $\un\ze$
and, for~$\eps'$ small enough, a continuous deformation 
$Q^{\eps'}_{\un\ze,\un\ze'}\in\Pi_\Om$ 
$(\un\ze'\in\un U{}_{\un\ze})$
of the path $\ga = P^{\eps'}_{\un\ze}$ constructed in the proof of Lemma~\ref{lmm:3.8}
such that
$L(Q^{\eps'}_{\un\ze,\un\ze'})<L(\un\ze')+2\eps$
for each $\un\ze'\in\un U{}_{\un\ze}$ and
the lift $\un Q^{\eps'}_{\un\ze,\un\ze'}$ on $X_\Om$
satisfies $\un Q^{\eps'}_{\un\ze,\un\ze'}(0)=\un0$ and
$\un Q^{\eps'}_{\un\ze,\un\ze'}(1)=\un\ze'$.
\end{lmm}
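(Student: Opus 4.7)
I treat first the case $\un\ze\in\un U{}_v$ for $v=((\om_1,\sig_1),\ldots,(\om_n,\sig_n))\in V_\Om$. A key simplification is that the open segment $(\om_n,\fp_\Om(\un\ze))$ contains no singularity of $\ov\cS_\Om$ at the level it reaches: were $\om_*$ such a singularity, the vertex $v''\defeq(v,(\om_*,\pm 1))$ would lie in $V_\Om$ and the associated forward half-line $C_{v''\to v}$ would contain $\fp_\Om(\un\ze)$, contradicting $\fp_\Om(\un\ze)\in U_v$. Consequently the extended sequence $v'=((\om'_1,\sig'_1),\ldots,(\om'_{n'},\sig'_{n'}))$ produced in the proof of Lemma~\ref{lmm:3.8} depends on $\un\ze$ only through its chart $v$, and the path $P^{\eps'}_{\un\ze}$ simply ends with a straight segment from $\om'_{n'}$ (the exit point of the last arc, or $\om_n$ if no such arc is present) to $\fp_\Om(\un\ze)$, carrying no further singularity.

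\textbf{Construction and verification in the interior case.} Choose $\un U{}_{\un\ze}\subset\un U{}_v$ to be a small open disc such that $[\om'_{n'},\fp_\Om(\un\ze')]\subset U_v$ for every $\un\ze'\in\un U{}_{\un\ze}$; this is possible since $U_v$ is open. Reusing the same sequence $v'$ and the same radius $\eps'$ as in the proof of Lemma~\ref{lmm:3.8}, I define $Q^{\eps'}_{\un\ze,\un\ze'}$ by keeping the polygonal skeleton and all the arcs around $\om'_2,\ldots,\om'_{n'}$ identical to those of $P^{\eps'}_{\un\ze}$, and replacing only the terminal straight segment ending at $\fp_\Om(\un\ze)$ by the one ending at $\fp_\Om(\un\ze')$. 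Joint continuity of $(\un\ze',t)\mapsto Q^{\eps'}_{\un\ze,\un\ze'}(t)$ is built into the construction; $\Om$-allowedness follows verbatim from the argument in Lemma~\ref{lmm:3.8}; and the lift to $X_\Om$ ends at $\un\ze'$ by construction. The polygonal skeleton has length $L(v)+|\fp_\Om(\un\ze')-\om_n|=L(\un\ze')$, while the total arc contribution is still bounded by $2\pi\eps'\sum_{j=2}^{n'}|\sig'_j|<\eps$, giving $L(Q^{\eps'}_{\un\ze,\un\ze'})<L(\un\ze')+\eps<L(\un\ze')+2\eps$.

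\textbf{Cut case and main obstacle.} When $\un\ze\in\un\ell{}_e$ for an edge $e\in E_\Om$ between two charts, a small neighborhood $\un U{}_{\un\ze}$ intersects both sheets. In this situation the path $P^{\eps'}_{\un\ze}$ from Lemma~\ref{lmm:3.8} already includes an arc around the singularity carried by the cut $\ell_e$, so its sequence $v'$ captures that singularity and its final straight segment is short and local to $\fp_\Om(\un\ze)$. I define $Q^{\eps'}_{\un\ze,\un\ze'}$ by the very same recipe as in the interior case: keep the arc fixed and move only the endpoint of the terminal segment to $\fp_\Om(\un\ze')$; the length estimate goes through identically. The main technical subtlety is verifying that the lift ends at the correct point $\un\ze'$ on whichever sheet $\un\ze'$ sits, continuously as $\un\ze'$ crosses the cut. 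This follows from the explicit gluing rules of Subsection~\ref{sec:3.1}: the arc around the singularity itself realizes the sheet transition, so the terminal segment is entirely contained in the sheet corresponding to $\un\ze'$, and continuity of the deformation across $\un\ell{}_e$ is automatic. This matching across the cut is the only non-routine step; everything else reduces to bookkeeping already done in Lemma~\ref{lmm:3.8}.
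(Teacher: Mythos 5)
Your construction is correct and follows essentially the same route as the paper: in the interior case you take $Q^{\eps'}_{\un\ze,\un\ze'}=P^{\eps'}_{\un\ze'}$, and your observation that the cut structure of $U_v$ forbids singular points on the final open segment (so that the sequence $v'$ is locally constant and the family is continuous) is exactly the point the paper relies on. In the cut case the paper instead varies the endpoint $\om'_{n',+}(\un\ze')$ of the last arc so that the terminal segment stays radial from $\om'_{n'}$, while you freeze the arc and tilt the terminal segment from a fixed point of $\un\ell{}_e$ to $\fp_\Om(\un\ze')$; both variants achieve continuity across the cut because $\fp_\Om$ is injective on the chart $\un U{}_e\supset\un U{}_{\un\ze}$, and both give the same length estimate.
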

Indeed,
the deformation of $P^{\eps'}_{\un\ze}$
is concretely given as follows:
\begin{enumerate}[--]
\item
When $\un\ze\in\un U_v$ for $v\in V_\Om$,
taking a neighborhood 
$\un U{}_{\un\ze}\subset\un U_v$ of $\un\ze$
sufficiently small,
we find that the family of
the paths $P^{\eps'}_{\un\ze'}$
$(\un\ze'\in\un U{}_{\un\ze})$
constructed in the proof
of Lemma \ref{lmm:3.8}
gives such a deformation.

\item
When $\un\ze\in\un \ell_e$ for $e\in E_\Om$,
we can take a neighborhood 
$\un U{}_{\un\ze}\subset\un U_e$ of $\un\ze$
so that
$
[\om'_{n',+}(\un\ze'),\fp_\Om(\un\ze')]
\subset \un U_e
$
for all $\un\ze'\in\un U{}_{\un\ze}$,
where $\om'_{n',+}(\un\ze')$ is 
the intersection point of
$[\om'_{n'},\fp_\Om(\un\ze')]$
and
$
C_{\om'_{n'}}^{\varepsilon'}.
$
Define a deformation $Q^{\eps'}_{\un\ze,\un\ze'}$ 
$(\un\ze'\in \un U_e)$
of $P^{\eps'}_{\un\ze}$
by continuously varying
the arc of
$
C_{\om'_{n'}}^{\varepsilon'}
$
%from $\om'_{n',-}$ to $\om'_{n',+}$
%(resp. the line segment $[\om'_{n',+},\fp_\Om(\un\ze)]$)
from $\om'_{n',-}$ to $\om'_{n',+}(\un\ze')$
and the line segment 
$[\om'_{n',+}(\un\ze'),\fp_\Om(\un\ze')]$
and fixing the other part of $P^{\eps'}_{\un\ze}$.
Then,
shrinking $\un U_{\un\ze}$ if necessary,
we find that $Q^{\eps'}_{\un\ze,\un\ze'}$ satisfies
$Q^{\eps'}_{\un\ze,\un\ze'}\in\Pi_\Om$
and
$L(Q^{\eps'}_{\un\ze,\un\ze'})<L(\un\ze')+2\eps$
for each $\un\ze'\in\un U{}_{\un\ze}$.
\end{enumerate}
Beware that,
when the edge $(v,v_\upa)$ is
the type i),
$Q^{\eps'}_{\un\ze,\un\ze'}$ is different from
$P^{\eps'}_{\un\ze'}$
for
$\un\ze\in\un \ell_{v\to v_\upa}$
and
$\un\ze'\in \un U_{\un\ze}\cap \un U_{v_\upa}$.
On the other hand,
$Q^{\eps'}_{\un\ze,\un\ze'}=P^{\eps'}_{\un\ze'}$
holds for
$\un\ze'\in \un U_{\un\ze}\cap \un U_{v}$.
When the edge $(v,v_\upa)$ is the type ii) or iii),
$Q^{\eps'}_{\un\ze,\un\ze'}=P^{\eps'}_{\un\ze'}$
holds for
$\un\ze\in\un \ell_{v\to v_\upa}$
and
$\un\ze'\in \un U_{\un\ze}$.

\medskip

Let $(X,\fp,\uO)$ be an $\Om$-endless Riemann surface.
For each $\un\ze\in X_\Om$,
take $\ga\in\Pi_\Om$ such that 
$\un\ga(1)=\un\ze$
and let $\uga_X$ be its lift on $X$.
Then, define a map $\fq:X_\Om\to X$
by $\fq(\un\ze)=\uga_X(1)$.
We now show the well-definedness of $\fq$.
%
%i.e., for $\ga_0,\ga_1\in\Pi_\Om$ with
%$\un\ga{}_0(1)=\un\ga{}_1(1)$,
%their lifts $\un{\ga}{}_{0}^X,\un{\ga}{}_1^X$ on $X$
%satisfy $\un{\ga}{}_0^X(1)=\un{\ga}{}_1^X(1)$.
%
For that purpose,
it suffices to prove the following
\begin{prp}\label{prp:3.10}
Let
$\ga_0,\ga_1\in\Pi_\Om$
such that
$\un\ga{}_0(1)=\un\ga{}_1(1)$.
Then,
there exists a continuous family $(H_s)_{s\in [0,1]}$
of $\Om$-allowed paths
satisfying the conditions
\begin{enumerate}
\item
$H_s(0)=0$
and
$H_s(1)=\ga_0(1)$
for all $s\in [0,1]$,
\item
$H_j=\ga_j$ for $j=0,1$.
\end{enumerate}

\end{prp}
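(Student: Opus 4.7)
The plan is to leverage the simple connectedness of $X_\Om$ established just above, combined with the local deformations provided by Lemma~\ref{lmm:3.9}. I first reduce to the following claim: any $\Om$-allowed path $\ga$ whose lift $\uga$ ends at a point $\un\ze\in X_\Om$ is homotopic, through $\Om$-allowed paths with fixed endpoints in $\C$, to the canonical path $P^{\eps'}_{\un\ze}$ built in the proof of Lemma~\ref{lmm:3.8}, for every sufficiently small $\eps'>0$. Granting this reduction, the proposition follows at once: since $\uga{}_0(1)=\uga{}_1(1)=:\un\ze$, both $\ga_0$ and $\ga_1$ are homotopic in $\Pi_\Om$ to the same canonical path $P^{\eps'}_{\un\ze}$, and the concatenation of the two homotopies (the second reversed) yields the required family $(H_s)_{s\in[0,1]}$.

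To prove the reduction, I would fix $\de,L>0$ with $\ga\in\Pi_\Om^{\de,L}$, choose $\eps'\ll\de$, and use compactness of $\uga([0,1])\subset X_\Om$ to select a subdivision $0=t_0<t_1<\cdots<t_N=1$ and points $\un\ze_i\defeq\uga(t_i)$ such that $\uga([t_{i-1},t_i])\subset\un U_{\un\ze_{i-1}}\cap\un U_{\un\ze_i}$, where the $\un U_{\un\ze_i}$ are the Lemma~\ref{lmm:3.9} neighborhoods. The homotopy is then assembled in stages: at the $i$-th stage one keeps $\ga|_{[t_i,1]}$ fixed and deforms the initial segment into a canonical piece ending at $\fp_\Om(\un\ze_i)$, using the continuous family $s\mapsto Q^{\eps'}_{\un\ze_i,\uga(t_{i-1}+s(t_i-t_{i-1}))}$ supplied by Lemma~\ref{lmm:3.9}, which slides $Q^{\eps'}_{\un\ze_i,\un\ze_{i-1}}$ into $Q^{\eps'}_{\un\ze_i,\un\ze_i}=P^{\eps'}_{\un\ze_i}$. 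Compatibility between two consecutive stages requires an additional short homotopy from $P^{\eps'}_{\un\ze_{i-1}}$ to $Q^{\eps'}_{\un\ze_i,\un\ze_{i-1}}$: both paths end at $\un\ze_{i-1}$ in $X_\Om$ and, by the explicit description of the $Q$-families recalled immediately after Lemma~\ref{lmm:3.9}, they differ only through a polygonal modification localized in a single cut-plane chart inside the overlap, where an explicit polygonal isotopy through $\Om$-allowed paths is available.

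The main obstacle will be controlling the Euclidean lengths of the intermediate paths throughout the construction: $\Om$-allowedness depends on the cumulative length of the projected path, not on the canonical distance in~$X_\Om$, so a generic path in $X_\Om$ may project to a path in~$\C$ whose length exceeds the canonical distance to its endpoint, and pushing forward a homotopy from $X_\Om$ naively need not yield $\Om$-allowed paths. This is precisely where the length bound $L(Q^{\eps'}_{\un\ze_i,\un\ze'})<L(\un\ze')+2\eps$ from Lemma~\ref{lmm:3.9} is crucial: taking $\eps'<\de/(10N)$ after the subdivision has been fixed guarantees that every intermediate path produced by the construction remains inside $\Pi_\Om^{\de/2,L+1}$, hence is $\Om$-allowed. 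Once this uniform length control is in place, splicing the stage-wise deformations and the junction isotopies into a single continuous family $(H_s)_{s\in[0,1]}$ is routine.
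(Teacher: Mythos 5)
Your proposal follows essentially the same route as the paper: the paper likewise reduces the statement to deforming each $\ga_j$ into the canonical path $P^{\eps'}_{\un\ze}$ attached to the common lifted endpoint (this is Lemma~\ref{lmm:3.11}), builds that deformation stage by stage over a finite subdivision of $[0,1]$ using the local families $Q^{\eps'}_{\un\ze,\un\ze'}$ of Lemma~\ref{lmm:3.9} while keeping the tail of~$\ga$ fixed, and treats the type~i) edge crossings by an explicit shrinking isotopy. The one substantive difference is the length control: the paper establishes the exact inequality $L\big(\ti H_s\big)\le L(\ga_{|s})$ by means of the geometric slack estimate~\eqref{3.3} acquired when the path first leaves the initial disc, whereas you absorb the $2\eps$ length excess of the $Q^{\eps'}$-paths into the $\de$-margin of $\Pi_\Om^{\de,L}$; since $\cS_\Om$ is upward-closed in the length variable, this margin argument is equally valid for the purposes of Proposition~\ref{prp:3.10} and lets you bypass~\eqref{3.3}.
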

The proof of Proposition \ref{prp:3.10}
is reduced to the following
\begin{lmm}\label{lmm:3.11}
For each $\ga\in\Pi_\Om$
and $\eps'>0$ sufficiently small,
there exists
a continuous family $(\ti H_s)_{s\in [0,1]}$
of $\Om$-allowed paths
satisfying the following conditions:
\begin{enumerate}
\item
$
L\big(\ti H_s\big)\leq
L(\ga_{|s})
$
and
$
\un{\ti H}{}_s(1)
=\un\ga(s)
$
for all
$s\in [0,1]$,

\item
$
\ti H_s=
P_{\un\ga(s)}^{\eps'}
$
for $s=0,1$.

\end{enumerate}

\end{lmm}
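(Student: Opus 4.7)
The strategy is to build $(\ti H_s)$ by covering $[0,1]$ with finitely many intervals on which the local deformation produced by Lemma~\ref{lmm:3.9} is available, and to patch the local deformations together continuously at the overlaps.

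First, for every $s_0\in[0,1]$ apply Lemma~\ref{lmm:3.9} at $\un\ga(s_0)\in X_\Om$: for each $\eps'>0$ small enough one obtains a neighborhood $\un U_{\un\ga(s_0)}$ and a continuous family $\un\ze'\mapsto Q^{\eps'}_{\un\ga(s_0),\un\ze'}\in\Pi_\Om$ whose lift ends at $\un\ze'$ and whose length is bounded by $L(\un\ze')+2\eps$. By continuity of the lift $\un\ga\col[0,1]\to X_\Om$, the set $J_{s_0}\defeq \un\ga\ii(\un U_{\un\ga(s_0)})$ is an open neighborhood of $s_0$ in $[0,1]$, and compactness produces a finite subcover giving a partition $0=s_0<s_1<\cdots<s_N=1$ together with base-points $\be_k\in[s_{k-1},s_k]$ such that $\un\ga\big([s_{k-1},s_k]\big)\subset \un U_{\un\ga(\be_k)}$ for each $k$. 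It is convenient to arrange the choices so that $\be_1=0$ and $\be_N=1$, which is possible since $s=0$ and $s=1$ are in any covering.

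On each closed sub-interval $[s_{k-1},s_k]$ set provisionally $\ti H_s^{(0)}\defeq Q^{\eps'}_{\un\ga(\be_k),\un\ga(s)}$. By Lemma~\ref{lmm:3.9} this yields a continuous family of $\Om$-allowed paths, ending at $\un\ga(s)$, with length $<L(\un\ga(s))+2\eps$; moreover at $s=0$ and $s=1$ this coincides by construction with $P^{\eps'}_{\un 0_\Om}$ and $P^{\eps'}_{\un\ga(1)}$, respectively. The only defect is that at the partition points $s_k$ ($1\le k\le N-1$) one obtains two possibly distinct $\Om$-allowed paths $Q^{\eps'}_{\un\ga(\be_k),\un\ga(s_k)}$ and $Q^{\eps'}_{\un\ga(\be_{k+1}),\un\ga(s_k)}$ sharing the same endpoints. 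To smooth out this discontinuity, use that $X_\Om$ is simply connected (Lemma before Section~\ref{sec:3.1} on the simple connectedness of $X_\Om$): the concatenation of the first path with the reverse of the second lifts to a loop in $X_\Om$ based at $\un 0_\Om$ and is therefore contractible through $\Om$-allowed loops. Interpolate this contraction on a small window $[s_k-\eta,s_k+\eta]$ to obtain the final family $(\ti H_s)$; the interpolation can be arranged to stay inside $\un U_{\un\ga(\be_k)}\cup \un U_{\un\ga(\be_{k+1})}$, which keeps $\Om$-allowedness under control.

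Regarding length: in each region $\ti H_s$ is either one of the families $Q^{\eps'}_{\un\ga(\be_k),\un\ga(s)}$ or a small bounded-length perturbation coming from the contraction, so the length stays $\le L(\un\ga(s))+O(\eps)$; since $L(\un\ga(s))\le L(\ga_{|s})$ (canonical distance lemma just before Section~3.2) and $\eps$ can be absorbed into the $\eps'$ chosen in the statement (or the small slack already present in Lemma~\ref{lmm:3.9}), this yields the inequality $L(\ti H_s)\le L(\ga_{|s})$. The main obstacle is the patching in the middle paragraph: one must check that the ambient neighborhoods provided by Lemma~\ref{lmm:3.9}, glued along the tree-like skeleton $Sk_\Om$, are simply connected so the homotopy through $\Om$-allowed paths can indeed be carried out while controlling both the endpoint $\un\ga(s)$ and the length uniformly in $s$.
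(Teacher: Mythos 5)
Your overall skeleton (finite cover of $[0,1]$ by intervals on which Lemma~\ref{lmm:3.9} applies, provisional definition $\ti H_s= Q^{\eps'}_{\un\ga(\be_k),\un\ga(s)}$ on each piece, then repair at the overlaps) matches the paper's. But the two steps you flag as delicate are exactly where the proposal has genuine gaps, and neither is closed by the argument you give.

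First, the patching. Simple connectedness of $X_\Om$ tells you the loop formed by $Q^{\eps'}_{\un\ga(\be_k),\un\ga(s_k)}$ and the reverse of $Q^{\eps'}_{\un\ga(\be_{k+1}),\un\ga(s_k)}$ is null-homotopic \emph{in $X_\Om$}, but an arbitrary such homotopy gives paths in $X_\Om$ whose projections to $\C$ need not be $\Om$-allowed (a path downstairs is $\Om$-allowed only if its accumulated length stays compatible with the filtration, and a wandering homotopy can inflate lengths arbitrarily), and certainly gives no control of $L(\ti H_s)$. The paper avoids this entirely: it observes that $Q^{\eps'}_{\un\ga(t_j),\un\ga(t)}=Q^{\eps'}_{\un\ga(t_{j+1}),\un\ga(t)}$ on the overlap \emph{except} in two precisely identified configurations involving a type-i) edge, and in those cases it writes down an explicit homotopy (shrinking the arcs of the circles $C^{\eps'}_{\om_{e,i}}$ onto the segments $[\om^t_{i,-},\om^t_{i,+}]$, etc.) which manifestly stays $\Om$-allowed and does not increase length beyond $L\big(Q^{\eps'}_{\un\ga(t_j),\un\ga(s)}\big)$. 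Some such explicit construction is needed; the abstract topological argument does not substitute for it.

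Second, the length bound. Lemma~\ref{lmm:3.9} gives only $L\big(Q^{\eps'}_{\un\ze,\un\ze'}\big)<L(\un\ze')+2\eps$, and the canonical-distance lemma gives $L(\un\ga(s))\le L(\ga_{|s})$; chaining these yields $L(\ti H_s)<L(\ga_{|s})+2\eps$, which is \emph{not} the required $L(\ti H_s)\le L(\ga_{|s})$, and there is no slack in the statement into which the $2\eps$ can be ``absorbed.'' The missing ingredient is the paper's estimate~\eqref{3.3}: once $\un\ga$ leaves the initial chart $\un U_{(0,0)}$ (through an edge toward some $v=((0,0),(\om_2,\sigma_2))$, with $\ga\in\Pi_\Om^{\de,L}$), one has the uniform margin $L(\un\ga(t))+\sqrt{|\om_2|^2+\de^2}-|\om_2|\le L(\ga_{|t})$, and choosing $\eps$ with $3\eps\le\sqrt{|\om_2|^2+\de^2}-|\om_2|$ absorbs the overhead; before that exit time one has the exact equality $L\big(Q^{\eps'}_{\un\ga(t_j),\un\ga(t)}\big)=L(\un\ga(t))$. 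Without this quantitative margin the sharp inequality in condition~1 of the lemma cannot be reached.
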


Notice that,
since $\un\ga(0)=\un0_\Om$,
$P_{\un\ga(0)}^{\eps'}$ is the constant map
$P_{\un\ga(0)}^{\eps'}=0$.

\begin{proof}
[Reduction of Proposition \ref{prp:3.10}
to Lemma \ref{lmm:3.11}]
For each $\ga\in\Pi_\Om$ and
$s\in(0,1]$,
define $H_s$ using $\ti H_s$ constructed
in Lemma \ref{lmm:3.11} as follows:
$$
H_s(t)\defeq 
\left\{
\begin{aligned}
&\ti H_s(t/s)
&\quad
\text{when}
\quad
&t\in[0,s],
\\
&\ga(t)
&\quad
\text{when}
\quad
&t\in[s,1].
\end{aligned}
\right.
$$
It extends continuously to $s=0$
and gives
a continuous family $( H_s)_{s\in [0,1]}$
of $\Om$-allowed paths
satisfying the assumption in Proposition \ref{prp:3.10}
with $\ga_0=\ga$ and $\ga_1=P_{\un\ga(1)}^{\eps'}$.

Now, 
let $\ga_0$ and $\ga_1$ be
the $\Om$-allowed paths 
satisfying the assumption in Proposition \ref{prp:3.10}.
Applying the above discussion to 
each of $\ga_0$ and $\ga_1$,
we obtain two families
of $\Om$-allowed paths
connecting them to $P_{\un\ga_0(1)}^{\eps'}$
and, concatenating the deformations at
$P_{\un\ga_0(1)}^{\eps'}$,
we obtain a deformation
$( H_s)_{s\in [0,1]}$
satisfying the conditions in Proposition \ref{prp:3.10}.
\end{proof}

\begin{proof}[Proof of Lemma \ref{lmm:3.11}]
Take $\de,L>0$ so that
$\ga\in\Pi_\Om^{\de,L}$.
We first show the following:
\begin{eq-text}\label{3.3}
When $\un\ga(t_0)\in \un U_{v\to (0,0)}$
for $t_0\in(0,1]$ and
$v=((0,0),(\om_2,\sigma_2))$,
the following estimate holds
for $t\in[t_0,1]$:
$$
L(\un\ga(t))+\sqrt{|\om_2|^2+\de^2}-|\om_2|
\leq L(\ga_{|t}).
$$
\end{eq-text}
Notice that,
since $\ga\in\Pi_\Om^{\de,L}$,
the length $L(\ga_{|t_0})$ of $\ga_{|t_0}$
must be longer than
that of the polygonal curve $C$
obtained by concatenating the line segments
$[0,\om_2+\de e^{i\theta}]$
and
$[\om_2+\de e^{i\theta},\ga(t_0)]$,
where
$
\theta=
\arg(\om_2)-\sigma_2\pi/2.
$
Then,
we find that,
for an arbitrary $\eps>0$,
taking $\eps'>0$ sufficiently small,
the path $\ti\ga^{\eps'}$
obtained by concatenating the paths
$P_{\un\ga(t_0)}^{\eps'}$ and $\ga|_{[t_0,1]}$
satisfies
$\ti\ga^{\eps'}\in\Pi_\Om$,
$\un{\ti\ga}^{\eps'}(t)=\un\ga(t)$
and
$
L(\ti\ga^{\eps'}_{|t})
\leq
L(\ti\ga^0_{|t})+\eps
$
for $t\in[t_0,1]$.
Therefore,
we have
$$
L(\un\ga(t))
\leq
L(\ti\ga^0_{|t})
\quad\text{for}\quad
t\in[t_0,1]
$$
Since $L(C)\geq \sqrt{|\om_2|^2+\de^2}+|\ga(t_0)-\om_2|$,
we find
$$
L(\ga_{|t})
=
L(\ti\ga^0_{|t})
+L(\ga_{|t_0})-L([0,\ga(t_0)])
\geq
L(\un\ga(t))
+\sqrt{|\om_2|^2+\de^2}-|\om_2|
$$
holds for $t\in[t_0,1]$,
and hence,
we obtain \eqref{3.3}.

Now,
we shall construct
$(H_s)_{s\in [0,1]}$.
Let $\eps>0$ be given.
We assign the path
$P_{\un\ga(t)}^{\eps'_t}$
$(\eps'_t>0)$
to each $t\in[0,1]$
and take a neighborhood
$\un U_{\un\ga(t)}$ of
$\un\ga(t)$ and
the deformation 
$Q_{\un\ga(t),\un\ze'}^{\eps'_t}$
$(\un\ze'\in \un U_{\un\ga(t)})$
of $P_{\un\ga(t)}^{\eps'_t}$
constructed in Lemma \ref{lmm:3.9}.
Then, we can cover $[0,1]$
by a finite number of intervals
$I_j=[a_j,b_j]$ $(j=1,2,\cdots,k)$
satisfying the following conditions:
\begin{enumerate}[--]
\item
The interior $I^\circ_j$ of $I_j$ satisfies
$I^\circ_{j_1}\cap I^\circ_{j_2}\neq \O$
when $|j_1-j_2|\leq 1$
and
$I_{j_1}\cap I_{j_2}= \O$
otherwise.

\item
There exists
$t_j\in I_j$
such that $t_j<t_{j+1}$  for $j=1,\cdots,k-1$
and
$\un\ga(I_j)\subset\un U_{\un\ga(t_j)}$.

\end{enumerate}
Notice that,
since $\un U_{\un\ga(t)}$ is taken
for each $t\in[0,1]$
so that it is contained in one of the charts
$\un U_v$ $(v\in V_\Om)$ or $\un U_e$
$(e\in E_\Om)$,
one of the followings holds:
\begin{enumerate}[--]

\item
$\un\ga(t_j)\in \un U_v$
and 
$
\un\ga(I_j)
\subset \un U_v
$
$(v\in V_\Om)$.

\item
$\un\ga(t_j)\in \un\ell_e$ 
and
$
\un\ga(I_j)
\subset \un U_e
$
$(e\in E_\Om)$.

\end{enumerate}
We set
$\dst
\eps'
=\min_j
\{\eps'_{t_j} \mid
\un\ga(t_j)\notin\un U_{(0,0)}
\}.
$
Then,
$P_{\un\ga(t_j)}^{\eps'}$
and its deformation
$Q_{\un\ga(t_j),\un\ze'}^{\eps'}$
$(\un\ze'\in \un U_{\un\ga(t_j)})$
also satisfy the conditions in
Lemma \ref{lmm:3.8} and Lemma \ref{lmm:3.9}.
Let $J_E\subset \{1,\cdots,k\}$
denote
the set of suffixes
satisfying the condition that
there exists $e\in E_\Om$ such that
$\un\ga(t_{j})\in \un\ell_e$
and let $j_0$ be the minimum of $J_E$.
Shrinking the neighborhood
$\un U_{\un\ga(t)}$ for each $t\in[0,1]$
at the first,
we may assume without loss of generality
that,
\begin{enumerate}[--]
\item
$|\ga(t)-\ga(t_j)|\leq \eps$
for $t\in I_j$
and $j=1,\cdots,k$,

\item
if $j$, $j+1\in J_E$,
there exists an edge $e\in E_\Om$ such that
$\un\ga(t_j)$, $\un\ga(t_{j+1})\in \un \ell_e$.

\end{enumerate}
Recall that,
from the construction of
$Q_{\un\ze,\un\ze'}^{\eps'}$,
\begin{equation*}
Q_{\un\ga(t_j),\un\ga(t)}^{\eps'}=
Q_{\un\ga(t_{j+1}),\un\ga(t)}^{\eps'}
\quad\text{for}\quad
t\in I_j\cap I_{j+1}
\end{equation*}
except for the cases where
there exists an edge $e=(v,v_\upa)\in E_\Om$
of the type i) such that
\begin{enumerate}[--]
\item
$
\un\ga(t_j)
\in\un U_{e}
$
and
$
\un\ga(t_{j+1})
\in \un U_{v_\upa},
$

\item
$
\un\ga(t_j)
\in\un U_{v_\upa}
$
and
$
\un\ga(t_{j+1})
\in \un U_{e}.
$

\end{enumerate}
In the first case,
the difference between
$
Q_{\un\ga(t_j),\un\ga(t)}^{\eps'}
$
and
$
Q_{\un\ga(t_{j+1}),\un\ga(t)}^{\eps'}
$
is the part from $\om^t(v_\upa)$ to $\ga(t)$,
where $\om^t(v_\upa)$ is
the intersection point of
$C_{\om(v_\upa)}^{\varepsilon'}$
and
$[\om(v_\upa),\ga(t)]$:
Let $\om_{e,i}$ $(i=0,\cdots,m+1)$
be the points  on the line segment
$[\om(v_\upa),\om(v)]$
satisfying the conditions
$(L_{e,i},\om_{e,i})\in\ov\cS_{\Om}$
and $L_{e,i}<L_{e,i+1}$,
where $L_{e,i}\defeq L(v_\upa)+|\om_{e,i}-\om(v_\upa)|$.
Then,
the part of
$
Q_{\un\ga(t_{j}),\un\ga(t)}^{\eps'}
$
from $\om^t(v_\upa)$ to $\ga(t)$
is given by concatenating
the arcs of 
$C_{\om_{e,i}}^{\varepsilon'}$
$(i=0,\cdots,m+1)$,
the intervals of
the line segment $[\om(v_\upa),\om(v)]$
and
$[\om^t(v),\ga(t)]$,
where $\om^t(v)$ is
the intersection point of
$C_{\om(v)}^{\varepsilon'}$
and
$[\om(v),\ga(t)]$.
(See Figure \ref{fig:4} (a).)
On the other hand,
$
Q_{\un\ga(t_{j+1}),\un\ga(t)}^{\eps'}
$
goes directly from $\om^t(v_\upa)$
to $\ga(t)$.
(See Figure \ref{fig:4} (d).)

Now,
let $\om^t_{i,+}$ (resp.\ $\om^t_{i,-}$)
be the intersection point of 
$C_{\om_{e,i}}^{\varepsilon'}$
and
$[\om^t(v_\upa),\om^t(v)]$
that is the closer to $\om^t(v)$
(resp.\ $\om^t(v_\upa)$).
While $t$ moves on $I_j\cap I_{j+1}$,
we first deform
the part of
$
Q_{\un\ga(t_{j}),\un\ga(t)}^{\eps'}
$
from $\om^t(v_\upa)$ to $\om^t(v)$
to the line segment $[\om^t(v_\upa),\om^t(v)]$
by shrinking the part of
$
Q_{\un\ga(t_{j}),\un\ga(t)}^{\eps'}
$
from $\om^t_{i,-}$ to $\om^t_{i,+}$
(resp.\ from $\om^t_{i,+}$ to $\om^t_{i+1,-}$)
to the line segment $[\om^t_{i,-},\om^t_{i,+}]$
(resp.\ $[\om^t_{i,+},\om^t_{i+1,-}]$) for each $i$.
(See Figure \ref{fig:4} (b) and (c).)
Then,
further shrinking the polygonal line
given by concatenating
$[\om^t(v_\upa),\om^t(v)]$
and
$[\om^t(v),\ga(t)]$
to the line segment
$[\om^t(v_\upa),\ga(t)]$,
we obtain a continuous family of $\Om$-allowed paths
$\big(\ti H_s\big)_{s\in [t_j,t_{j+1}]}$
satisfying the following conditions:
\begin{enumerate}[--]
\item
$
\ti H_s=
Q_{\un\ga(t_{j}),\un\ga(s)}^{\eps'}
$
when
$s\in [t_j,t_{j+1}]\setminus I_{j+1}$,

\item
$
\ti H_s=
Q_{\un\ga(t_{j+1}),\un\ga(s)}^{\eps'}
$
when
$s\in [t_j,t_{j+1}]\setminus I_j$,

\item
$
L\big(\ti H_s\big)\leq
L\big(Q_{\un\ga(t_{j}),\un\ga(s)}^{\eps'}\big)
$
and
$
\un{\ti H}{}_s(1)
=\un\ga(s)
$
when
$s\in I_j\cap I_{j+1}$.

\end{enumerate}
\begin{figure}[h]
\centering
\includegraphics[scale=0.4]{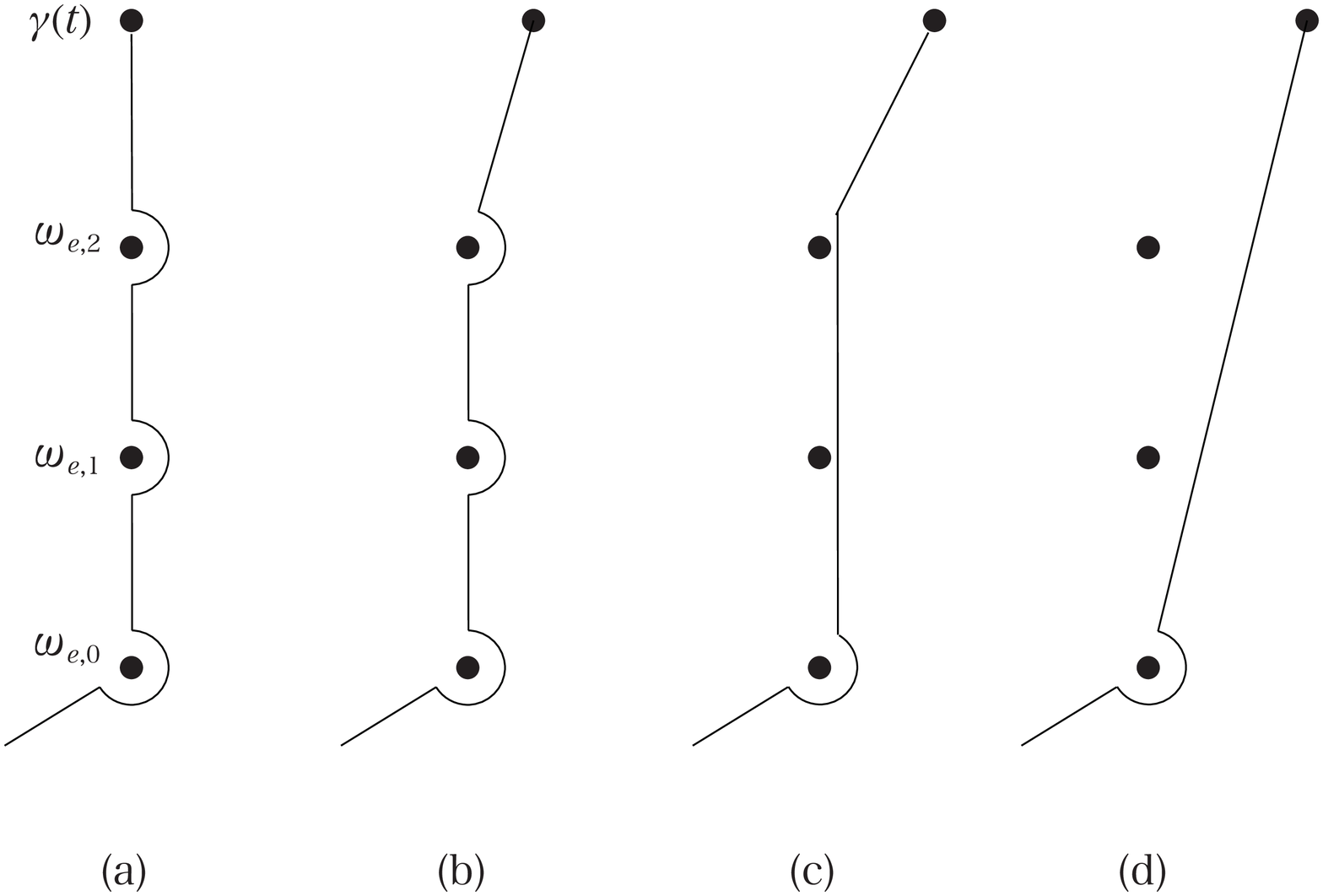}
\caption{}\label{fig:4}
\end{figure}

For the second case,
we can also construct
a continuous family of $\Om$-allowed paths
$\big(\ti H_s\big)_{s\in [t_j,t_{j+1}]}$
satisfying the first and the second conditions above
and
\begin{enumerate}[--]
\item
$
L\big(\ti H_s\big)\leq
L\big(Q_{\un\ga(t_{j+1}),\un\ga(s)}^{\eps'}\big)
$
and
$
\un{\ti H}{}_s(1)
=\un\ga(s)
$
when
$s\in I_j\cap I_{j+1}$.

\end{enumerate}
Then,
we can continuously extend $\ti H_s$
to $[0,1]$
by interpolating it by
$
Q_{\un\ga(t_{j}),\un\ga(s)}^{\eps'}
$
so that it satisfies
\begin{eq-text}\label{3.4}
$\dst
L\big(\ti H_s\big)\leq
\max_j\big\{
L\big(Q_{\un\ga(t_{j}),\un\ga(s)}^{\eps'}\big)
\mid
s \in I_j
\big\}
$
and
$
\un{\ti H}{}_s(1)
=\un\ga(s)
$
for all
$s\in [0,1]$.
\end{eq-text}
Since $I_{j_0}$ is taken so that
$|\ga(t)-\ga(t_{j_0})|\leq \eps$
holds on $I_{j_0}$,
applying \eqref{3.3} with $t_0=t_{j_0}$,
we have the following estimates:
$$
L(\un\ga(t))+\sqrt{|\om_2|^2+\de^2}-|\om_2|-\eps
\leq L(\ga_{|t})
\quad
\text{for}
\quad
t\in[a_{j_0},1].
$$
On the other hand,
since $\un\ga(t)\in \un U_{(0,0)}$ for $t\in [0,a_{j_0}]$,
we find
$
L\big(Q_{\un\ga(t_{j}),\un\ga(t)}^{\eps'}\big)=
L(\un\ga(t))
$
holds 
for $t\in I_j$ and $j<j_0$
from the construction of $Q_{\un\ze,\un\ze'}^{\eps'}$.
Therefore,
taking $\eps>0$ sufficiently small so that 
$$
3\eps\leq
\sqrt{|\om_2|^2+\de^2}-|\om_2|,
$$
we obtain the following estimates
from Lemma \ref{lmm:3.8}
and \eqref{3.4}:
$$
L\big(\ti H_s\big)\leq
L(\ga_{|s})
\quad
\text{for}
\quad
s\in[0,1].
$$
Finally,
from the construction of
$\ti H_s$,
we find that $\ti H_s$ satisfies
$
\ti H_s=
P_{\un\ga(s)}^{\eps'}
$
for $s=0,1$.
\end{proof}

Since $\fp_\Om=\fp\circ\fq$ and
$\fp$ is isomorphic near $\uO$,
all the maps $\fq:X_\Om\to X$
must coincide near $\uO_\Om$,
and hence,
uniqueness of $\fq$ follows from
the uniqueness of the analytical continuation
of $\fq$.
Finally,
$X_\Om$ is unique up to isomorphism
because $X_\Om$ is an initial object in
the category of $\Om$-endless Riemann surfaces.

\subsection{Supplement to the properties of $X_\Om$}

Let $\gO_X$ denote the sheaf of holomorphic functions on
a Riemann surface $X$
and consider the natural morphism
$\fp_\Om^* \col \fp\ii_\Om\gO_{\C} \to \gO_{X_\Om}$
induced by 
$
\fp_\Om:X_\Om\to\C.
$
Since $X_\Om$ is simply connected,
we obtain the following:
%%%%%%%%%%%%%%%%%%%%%%%%%%%%%%%%%%%%%%%%%%%%%%%%%%%%
\begin{prp}
Let $\hat\ph \in  \gO_{\C,0}$. 
Then the followings are equivalent:
\begin{enumerate}[{\rm i)}]
\item
$\hat\ph\in\gO_{\C,0}$ is $\Om$-continuable,
\item
$\fp_\Om^*\hat\ph \in \gO_{X_{\Om},\uO_\Om}$ can be analytically continued
along any path on $X_{\Om}$,
\item
$\fp_\Om^*\hat\ph \in \gO_{X_{\Om},\uO_\Om}$ can be extended to 
$\Ga(X_{\Om},\gO_{X_{\Om}})$.
\end{enumerate}
\end{prp}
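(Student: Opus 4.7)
The plan is to prove the chain $\mathrm{iii)} \Rightarrow \mathrm{ii)} \Rightarrow \mathrm{i)} \Rightarrow \mathrm{iii)}$, leaning on the three main structural facts already established: that $X_\Om$ is $\Om$-endless, that $X_\Om$ is simply connected, and the analytic toolkit of Lemma~\ref{lmm:3.8}, Lemma~\ref{lmm:3.9} and Proposition~\ref{prp:3.10}.

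The implication iii) $\Rightarrow$ ii) is immediate: a global holomorphic section on $X_\Om$ restricts to an analytic continuation along every path of $X_\Om$. Conversely, ii) $\Rightarrow$ iii) is a direct application of the classical monodromy theorem: since $X_\Om$ is simply connected, a germ at $\uO_\Om$ that continues analytically along every path of $X_\Om$ assembles into a single-valued global holomorphic function, i.e.\ an element of $\Ga(X_\Om,\gO_{X_\Om})$.

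For ii) $\Rightarrow$ i), pick $\ga\in\Pi_\Om$. By the $\Om$-endlessness of $(X_\Om,\fp_\Om,\uO_\Om)$ proved in Section~\ref{sec:3.1}, $\ga$ admits a lift $\un\ga\col[0,1]\to X_\Om$ with $\un\ga(0)=\uO_\Om$. Assumption ii) furnishes an analytic continuation of $\fp_\Om^*\hat\ph$ along $\un\ga$; pushing it forward through the local biholomorphism $\fp_\Om$ yields an analytic continuation of $\hat\ph$ along $\ga = \fp_\Om\circ\un\ga$. Thus $\hat\ph\in\hat\gR_\Om$.

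The substantive step is i) $\Rightarrow$ iii). For each $\un\ze\in X_\Om$ I would use Lemma~\ref{lmm:3.8} to produce an $\Om$-allowed path $\ga$ whose lift on $X_\Om$ ends at $\un\ze$, apply i) to continue $\hat\ph$ along $\ga$, and transport the resulting germ at $\fp_\Om(\un\ze)$ back to a germ at $\un\ze$ via the local biholomorphism $\fp_\Om$, thereby defining a value $(\fp_\Om^*\hat\ph)(\un\ze)$. Two things then need checking, and this is where the main obstacle lies. For well-definedness, given two $\Om$-allowed paths $\ga_0,\ga_1$ whose lifts both terminate at $\un\ze$, Proposition~\ref{prp:3.10} supplies an $\Om$-allowed homotopy $(H_s)_{s\in[0,1]}$ between them with fixed endpoint; homotopy invariance of analytic continuation then forces the two germs at $\un\ze$ to coincide. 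For holomorphy, Lemma~\ref{lmm:3.9} provides a neighbourhood $\un U_{\un\ze}$ of $\un\ze$ and a continuous family of $\Om$-allowed paths $Q^{\eps'}_{\un\ze,\un\ze'}$ reaching nearby points $\un\ze'\in\un U_{\un\ze}$; the analytic continuation of $\hat\ph$ along this continuous family is holomorphic in the endpoint, so $\fp_\Om^*\hat\ph$ is holomorphic on $\un U_{\un\ze}$. Once these two verifications are in place, the pointwise construction globalises to a section in $\Ga(X_\Om,\gO_{X_\Om})$, completing the proof.
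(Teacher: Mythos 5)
Your proof is correct and follows essentially the route the paper intends: the paper states this proposition without a written proof, deducing it from the simple connectedness of $X_\Om$ together with the machinery of Section~\ref{sec:3} (the $\Om$-endlessness of $X_\Om$, Lemma~\ref{lmm:3.8}, Lemma~\ref{lmm:3.9} and Proposition~\ref{prp:3.10}), which is exactly what you deploy — in particular your treatment of i) $\Rightarrow$ iii) mirrors the paper's construction of the morphism $\fq$ in the proof of Theorem~\ref{lemuniversalXOm}.
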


Therefore, we find
\[
\fp_\Om^* \col
\hat\gR_\Om \isom \Ga(X_{\Om},\gO_{X_{\Om}}).
\]

%%%%%%%%%%%%%%%%%%%%%%%%%%%%%%%%%%%%%%%%%%%%%%%%%%%%%
\begin{nota}
For $L,\de>0$, 
using $\Pi_\Om^{\de,L}$ of~\eqref{eq:defPideL},
we define a compact subset
$K_{\Om}^{\de,L}$ of $X_\Om$ by
\beglab{eq:defKOmdeL}
K_{\Om}^{\de,L} \defeq \big\{\, \un\ze\in X_\Om \mid 
\;\text{$\exists \ga\in\Pi_\Om^{\de,L}$ such that 
$\un\ze = \uga(1)$}
\,\big\}.
\edla
%
% (with $\Pi_\Om^{\de,L}$ defined by~\eqref{eq:defPideL}).
%
\end{nota}
%%%%%%%%%%%%%%%%%%%%%%%%%%%%%%%%%%%%%%%%%%%%%%%%%%%%
Notice that
$X_\Om$ is exhausted by $(K_\Om^{\de,L})_{\de,L>0}$.
Therefore, the family of
seminorms $\|\cdot\|_{\Om}^{\de,L}$ $(\de,L>0)$
defined by
$$
\|\, \hat f\,
\|_{\Om}^{\de,L}
\defeq \sup_{\un\ze\in K_\Om^{\de,L}}
| \hat f(\un\ze)|
\quad
\text{for}
\ens
\hat f\in\Ga(X_{\Om},\gO_{X_{\Om}})
$$
induces
a structure of
Fr\'echet space on 
$\Ga(X_{\Om},\gO_{X_{\Om}}).$

\begin{dfn}\label{dfn:3.8}
We introduce a structure of Fr\'echet space on 
$\ti\gR_\Om$ by a family of seminorms
$\|\cdot\|_{\Om}^{\de,L}$ $(\de,L>0)$
defined by
$$
\|\, \ti\ph\,
\|_{\Om}^{\de,L}
\defeq 
|\ph_0|+
\|\,\fp_\Om^*\hat\ph\,
\|_{\Om}^{\de,L}
\quad
\text{for}
\quad
\ti\ph\in\ti\gR_\Om,
$$
where 
$
\cB(\ti\ph)
=\ph_0\de+\hat\ph
\in\C\de \oplus \hat\gR_\Om.
$
\end{dfn}

Let $\Om'$ be a \dfs\ such that
$\Om\subset\Om'$.
Since $\Pi_{\Om'}\subset\Pi_{\Om}$,
$X_{\Om}$ is $\Om'$-endless.
Therefore,
Theorem~\ref{lemuniversalXOm} yields a morphism
$$
\fq:(X_{\Om'},\fp_{\Om'},\uO_{\Om'})
\to (X_{\Om},\fp_\Om,\uO_\Om),
$$
which induces a morphism 
$\fq^*:\fq\ii\gO_{X_{\Om}}\to\gO_{X_{\Om'}}$.
Since 
$
\fq(K_{\Om'}^{\de,L})\subset K_\Om^{\de,L},
$
we have
$$
\|\,\fq^* \hat f\,
\|_{\Om'}^{\de,L}
\leq
\|\, \hat f\,
\|_{\Om}^{\de,L}
\quad
\text{for}
\ens
\hat f\in\Ga(X_{\Om},\gO_{X_{\Om}}),
$$
and hence,
$$
\|\, \ti\ph\,
\|_{\Om'}^{\de,L}
\leq
\|\, \ti\ph\,
\|_{\Om}^{\de,L}
\quad
\text{for}
\ens
\ti\ph\in\ti\gR_{\Om}.
$$

In view of Theorem~\ref{thm:4.9} below,
the product map
$ % $$
\ti\gR_{\Om}\times\ti\gR_{\Om'}
\to\ti\gR_{\Om*\Om'}
$ % $$
is continuous
and hence,
when $\Om*\Om=\Om$,
$\ti\gR_{\Om}$ is a Fr\'echet algebra.

%%%%%%%%%%%%%%%%%%%%%%%%%%%%%%%%%%%%%%%%%%%%%%%%%%%%
%\bcb

\subsection{The endless Riemann surface associated with a \ddfs}

%%%%%%%%%%%%%%%%%%%%%%%%%%%%%%%%%%%%%%%%%%%%%%%%%%%%

In this section, we discuss the construction of
the endless Riemann surfaces associated with an arbitrary \ddfs~$\Om$.
Let us first define the skeleton of $\Om$:
\begin{dfn}
Let
$
V_\Om\subset
\bigcup_{n=1}^{\infty}
(\C\times\Z)^n
$
be the set of vertices
$$
v\defeq 
((\om_1,\sigma_1),\cdots,(\om_n,\sigma_n))
\in
(\C\times\Z)^n
$$
that satisfy the conditions
1) and 2) in Definition \ref{dfn:3.3} and
\begin{enumerate}
\item[3')]
$\big(M_j(v),L_j(v),\om_j\big)\in \ov\cS_\Om$
for $j=2,\cdots,n$,
\end{enumerate}
with
$
L_{j}(v)\defeq \sum_{i=1}^{j-1}
|\om_{i+1}-\om_i|
$
$(j=2,\cdots,n)$,
$$
M_j(v)\defeq
\left\{ \begin{aligned}
&0
&(j=2),
\\
&\sum_{i=2}^{j-1}\big(A_i(v)+2\pi(|\sigma_i|-1)\big)
&(j=3,\cdots,n),
\end{aligned} \right.
$$
and
$$
A_i(v)\defeq
\left\{ \begin{aligned}
&|\theta_i|
&\text{ if }\theta_i\sigma_i\geq0,
\\
&2\pi-|\theta_i|
&\text{ if }\theta_i\sigma_i<0,
\end{aligned} \right.
$$
where
$$
\theta_i:=\arg\frac{\om_{i+1}-\om_i}{\om_i-\om_{i-1}}
$$
is taken so that $\theta_i\in(-\pi,\pi]$.
Let $E_\Om\subset V_\Om\times V_\Om$ be the set of edges
$e=(v',v)$ that satisfy one of the conditions
i) $\sim$ iii) in Definition \ref{dfn:3.3}.
We denote
the directed tree diagram $(V_\Om,E_\Om)$
by $Sk_\Om$
and call it
\emph{skeleton} of $\Om$.
\end{dfn}

Now, assigning a cut plane $U_v$ (resp.\ an open set $U_e$)
to each $v\in V_\Om$ (resp.\ each $e\in E_\Om$ of type i))
defined by totally the same way with Section \ref{sec:3.1}
and patching them as in Section \ref{sec:3.1},
we obtain an initial object
$(X_{\Om},\fp_\Om,\uO_\Om)$
in the category of $\Om$-endless Riemann surfaces
associated with a \ddfs\ $\Om$.
We denote the lift of $\ga\in\Pi_\Om\dv$ on $X_\Om$ by $\un\ga$.

%\vspace{3cm}

%\ecb

%%%%%%%%%%%%%%%%%%%%%%%%%%%%%%%%%%%%%%%%%%%%%%%%%%%%
\section{
Estimates for the analytic continuation of iterated convolutions
%$\ga$-adapted deformations of the identity
}
\label{sec:4}%{sec:constrorfixisot}
%%%%%%%%%%%%%%%%%%%%%%%%%%%%%%%%%%%%%%%%%%%%%%%%%%%%

In this section, our aim is to prove the following theorem,
which is the analytical core of our study of the convolution product
of endlessly continuable functions.

%%%%%%%%%%%%%%%%%%%%%%%%%%%%%%%%%%%%%%%%%%%%%%%%%%%%
\begin{thm}\label{thm:4.7}
Let $\de,L>0$ be reals.
Then there exist $c,\de'>0$ such that,
for every \dfs~$\Om$ such that $\Om_{4\de}=\O$, for every integer $n\ge1$ and for
every
$\hat f_1, \ldots, \hat f_n \in \hat\gR_\Om$,
the function $1*\hat f_{1}*\cdots *\hat f_{n}$ (which is known to belong to $\hat\gR_{\Om^{*n}}$)
satisfies
\beglab{4.6}
\big|\,
\fp_{\Om^{*n}}^* \big( 1*\hat f_{1}*\cdots *\hat f_{n}\big )
(\un\ze)
\big|
\leq
\frac{c ^n}{n!}
%
%\ \sup_{\substack{ L\pp1,\ldots,L\pp n>0 \\ L\pp 1+\cdots+L\pp n=L}}
\ \sup_{L_1+\cdots+L_n=L}
 \big\|\, \fp_\Om^{*} \hat f_1 \big\|_{\Om}^{\de',L_1}
\, \cdots
\big\|\, \fp_\Om^{*} \hat f_n \big\|_{\Om}^{\de',L_n}
\quad\text{for} \ens \un\ze \in K_{\Om^{*n}}^{\de,L}
\edla
(with notation~\eqref{eq:defKOmdeL}).
\end{thm}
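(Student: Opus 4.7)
My plan is to fix $\un\zeta\in K_{\Om^{*n}}^{\de,L}$, select an $\Om^{*n}$-allowed path $\ga\in\Pi_{\Om^{*n}}^{\de,L}$ whose lift to $X_{\Om^{*n}}$ ends at $\un\zeta$, and represent the analytic continuation of $1*\hat f_1*\cdots*\hat f_n$ along~$\ga$ as an iterated integral over the standard $n$-simplex $\Delta_n=\{0\le s_1\le\cdots\le s_n\le 1\}$:
\[
\fp_{\Om^{*n}}^*\bigl(1*\hat f_1*\cdots*\hat f_n\bigr)(\un\zeta)
=\int_{\Delta_n}\prod_{j=1}^n \hat f_j\bigl(\ga(s_j)-\ga(s_{j-1})\bigr)\,\ga'(s_1)\cdots\ga'(s_n)\,ds,
\]
with $\ga(s_0)\defeq 0$, each factor being the analytic continuation of~$\hat f_j$ along the translated sub-path $\ga_j(t)\defeq\ga(s_{j-1}+t(s_j-s_{j-1}))-\ga(s_{j-1})$. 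Once this representation is established with each~$\ga_j$ lying in some $\Pi_\Om^{\de',L_j}$, where $L_j\defeq L(\ga|_{[s_{j-1},s_j]})$ and $L_1+\cdots+L_n=L(\ga)\le L$, the estimate~\eqref{4.6} follows by taking absolute values: each factor is bounded by $\|\fp_\Om^*\hat f_j\|_\Om^{\de',L_j}$, hence by the supremum on the right-hand side of~\eqref{4.6}; the remaining integral $\int_{\Delta_n}|\ga'(s_1)|\cdots|\ga'(s_n)|\,ds$ is at most $L(\ga)^n/n!\le L^n/n!$, and $L^n$ is absorbed into~$c^n$.

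The main obstacle is establishing this integral representation and, in particular, producing, for almost every $(s_1,\ldots,s_n)\in\Delta_n$, an $\Om$-allowed realization of each~$\ga_j$ with a uniform safety margin $\de'>0$ depending only on~$\de$. The subtlety is that a translation by $-\ga(s_{j-1})$ does not a priori preserve $\Om$-allowedness: $\ga$ avoids sums $\om_1+\cdots+\om_n\in\wt{\Om^{*n}}$ with margin~$\de$, whereas $\ga_j$ must avoid the single-index set $\wt\Om$, and whether the former avoidance implies the latter depends on the position of $\ga(s_{j-1})$, which is not controlled. Here the hypothesis $\Om_{4\de}=\O$ plays a decisive role: it supplies a singularity-free disk of radius~$4\de$ around the origin in which each~$\ga_j$ begins, providing enough room to deform its initial portion safely, while the $\de$-margin of~$\ga$ from~$\ov\cS_{\Om^{*n}}$ controls the rest. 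An inductive construction on~$j$, combined with a contour deformation argument analogous to Lemma~\ref{lemPathModif}, then produces the required $\Om$-allowed sub-paths with continuous dependence on the parameters $(s_1,\ldots,s_n)$ off a negligible set.

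To carry out these deformations coherently and to ensure that the integrand is measurable so that Fubini applies, the natural framework is the one furnished by the universal Riemann surfaces $X_\Om$ and $X_{\Om^{*n}}$ from Theorem~\ref{lemuniversalXOm}: one pulls back each~$\hat f_j$ via the canonical morphism, lifts~$\ga$ and its translated sub-paths to these surfaces, and uses the compact exhaustion by the $K_\Om^{\de',L_j}$'s to reduce continuity of the integrand in $(s_1,\ldots,s_n)$ to a compactness argument. The stability property $(\Om*\Om')_{4\de}=\O$ whenever $\Om_{4\de}=\Om'_{4\de}=\O$ ensures that the buffer hypothesis propagates to $\Om^{*n}$ for every~$n$, so that the constants $c$ and $\de'$ obtained this way depend only on $\de$ and $L$, uniformly in~$n$ and in the \dfs~$\Om$ subject to $\Om_{4\de}=\O$, as required by the statement.
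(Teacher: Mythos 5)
There is a genuine gap, and it sits at the very first step. The integral representation you take as your starting point,
\[
\fp_{\Om^{*n}}^*\bigl(1*\hat f_1*\cdots*\hat f_n\bigr)(\un\ze)
=\int_{\De_n}\prod_{j=1}^n \hat f_j\bigl(\ga(s_j)-\ga(s_{j-1})\bigr)\,\ga'(s_1)\cdots\ga'(s_n)\,\dd s,
\]
is not the analytic continuation of the convolution product along~$\ga$; it is the formula for a straight segment with $\ga$ naively substituted for the segment. The continuation of $\hat f_1*\cdots*\hat f_n$ along $\ga$ is obtained by continuously deforming the original integration cycle as the endpoint moves along~$\ga$, and the deformed cycle at time~$t$ is in general \emph{not} the reparametrized arc $\ga_{|t}$: the "moving singularities'' of the integrand (for $n=2$, the points $\ga(t)-\om'$ with $\om'$ a singularity of $\hat f_2$) can cross the arc $\ga([0,t])$ even though $\ga\in\Pi_{\Om^{*n}}^{\de,L}$, because $\Om^{*n}$-allowedness only forbids $\ga(t)$ from hitting \emph{sums} of singular points at the appropriate length levels, not from hitting $\om'+\ga(s)$ for intermediate~$s$. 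At such a crossing your integrand is evaluated at a singular point of~$\hat f_j$, so the formula is not even well defined, let alone equal to the continuation. You acknowledge that the translated sub-paths need not be $\Om$-allowed and propose to repair them by a deformation à la Lemma~\ref{lemPathModif}, but that lemma changes the path (hence, for a multivalued germ, the branch and the value), so it cannot be used inside the integrand without changing the integral; and the hypothesis $\Om_{4\de}=\O$ only clears a neighbourhood of the origin — it does nothing about collisions far from the origin, which is where the real difficulty lies.

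What the paper does instead — and what is missing from your proposal — is to deform the whole integration simplex rather than reuse~$\ga$: starting from Lemma~\ref{lmn:formbeta} (the symmetric form~\eqref{eq:initformul} on the simplex $\{s_j\ge0,\ \sum s_j\le1\}$, with arguments $\ze s_j$, not increments along an ordered simplex), one builds a \emph{$\ga$-adapted deformation of the identity} $(\Psi_t)$ as the flow of the vector field~\eqref{eqdefvecX}, whose $j$-th component moves the point $v_j$ at speed proportional to its distance $\eta(v_j)$ to $\{(0,0)\}\cup\cS_\Om$, normalized by $D(t,\vec v\,)\ge\de$ so that the coordinates always sum to $\ti\ga(t)$. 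Points close to singularities move slowly and never reach them, which yields the quantitative margin $\de'(t)$ and the length splitting $L_1+\cdots+L_n=L(\ga_{|t})$ of Theorem~\ref{thm:4.6}; a Gronwall estimate then bounds the Jacobian of the deformation by $c(t)^n$, and Proposition~\ref{prp:DeformInteg} converts this into the continuation formula and the bound~\eqref{4.6}. Your length bookkeeping and the $1/n!$ from the simplex volume are the right final ingredients, but without the adapted isotopy there is no valid representation of the continued convolution to which they can be applied.
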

%%%%%%%%%%%%%%%%%%%%%%%%%%%%%%%%%%%%%%%%%%%%%%%%%%%%

Using the Cauchy inequality, the identity
$ % \[
\frac{\dd\,}{\dd\ze}(1*\hat f_{1}*\cdots *\hat f_{n})
=\hat f_{1}*\cdots *\hat f_{n}
$ % \]
and the inverse Borel transform,
one easily deduces the following 

%%%%%%%%%%%%%%%%%%%%%%%%%%%%%%%%%%%%%%%%%%%%%%%%%%%%
\begin{crl}\label{crl:4.7'}
Let $\de,L>0$ be reals.
Then there exist $c,\de',L'>0$ such that,
for every \dfs~$\Om$ such that $\Om_{4\de}=\O$, for every integer $n\ge1$ and for
every
$ \ti f_1, \ldots,  \ti f_n \in \ti\gR_\Om$ without constant term,
the formal series $\ti f_{1}\cdots  \ti f_{n}$
(which is known to belong to $\ti\gR_{\Om^{*n}}$)
satisfies
$$
\big\|\,
\ti f_{1}\cdots  \ti f_{n}
\big\|_{\Om^{*n}}^{\de,L}
\leq
\frac{c ^{n+1}}{n!}\,
 \big\|\,  \ti f_1 \big\|_{\Om}^{\de',L'}
\, \cdots
\big\|\,  \ti f_n \big\|_{\Om}^{\de',L'}.
$$
\end{crl}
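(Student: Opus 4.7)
The strategy is to derive the bound on the product from Theorem~\ref{thm:4.7} via a Cauchy estimate. Since each $\ti f_i$ has no constant term, $\cB \ti f_i = \hat f_i \in \hat\gR_\Om$, the algebra-morphism property of $\cB$ gives $\cB(\ti f_1 \cdots \ti f_n) = \hat f_1 * \cdots * \hat f_n$, and the product is again in $z\ii\C[[z\ii]]$, so
\[
\|\,\ti f_1 \cdots \ti f_n\,\|_{\Om^{*n}}^{\de,L}
= \|\,\fp_{\Om^{*n}}^*(\hat f_1 * \cdots * \hat f_n)\,\|_{\Om^{*n}}^{\de,L}.
\]
Theorem~\ref{thm:4.7} bounds $1 * \hat f_1 * \cdots * \hat f_n$ rather than $\hat f_1 * \cdots * \hat f_n$ itself, so the plan is to recover the derivative via the Cauchy inequality applied in a fixed-size disc around each $\un\ze \in K_{\Om^{*n}}^{\de,L}$.

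Fix $r = r(\de) > 0$ small enough (for instance $r = \de/(2\sqrt 2)$) so that extending any path $\ga \in \Pi_{\Om^{*n}}^{\de,L}$ with endpoint $\un\ze$ by a straight segment of length $\le r$ yields a path in $\Pi_{\Om^{*n}}^{\de/2,\,L+r}$. Indeed if $\ti\ga$ stays at distance $\ge \de$ from $\cS_{\Om^{*n}}$ in $\R\times\C$ and one appends a segment of unit slope in the $\la$-coordinate, the endpoint remains at distance $\ge \de - r\sqrt2 \ge \de/2$ from $\cS_{\Om^{*n}}$. Hence the closed $r$-disc around $\un\ze$ in the $\fp_{\Om^{*n}}$-chart is contained in $K_{\Om^{*n}}^{\de/2,\,L+r}$. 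Applying Theorem~\ref{thm:4.7} with parameters $(\de/2,\,L+r)$—whose hypothesis $\Om_{4\cdot \de/2} = \Om_{2\de} = \O$ follows from the assumption $\Om_{4\de} = \O$—produces constants $c'', \de'' > 0$, depending only on $\de$ and $L$, such that for every $\un w \in K_{\Om^{*n}}^{\de/2,\,L+r}$,
\[
|\fp_{\Om^{*n}}^*(1 * \hat f_1 * \cdots * \hat f_n)(\un w)|
\le \frac{(c'')^n}{n!} \sup_{L_1+\cdots+L_n = L+r} \prod_{i=1}^n \|\fp_\Om^*\hat f_i\|_\Om^{\de'',\,L_i}.
\]

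Combining the identity $\frac{\dd}{\dd\ze}(1 * \hat f_1 * \cdots * \hat f_n) = \hat f_1 * \cdots * \hat f_n$ with the Cauchy inequality on this disc gives
\[
|\fp_{\Om^{*n}}^*(\hat f_1 * \cdots * \hat f_n)(\un\ze)|
\le \frac{1}{r}\cdot\frac{(c'')^n}{n!} \prod_{i=1}^n \|\fp_\Om^*\hat f_i\|_\Om^{\de'',\,L+r},
\]
where the supremum over splittings $L_1+\cdots+L_n=L+r$ has been absorbed using the monotonicity $\|\cdot\|_\Om^{\de'',L_i} \le \|\cdot\|_\Om^{\de'',L+r}$ induced by $K_\Om^{\de'',L_i}\subset K_\Om^{\de'',L+r}$. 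Setting $L' := L + r$, $\de' := \de''$, and $c := \max(c'', 1/r, 1)$ converts $\tfrac{1}{r}(c'')^n$ into $c^{n+1}$; since $\|\fp_\Om^*\hat f_i\|_\Om^{\de',L'} = \|\ti f_i\|_\Om^{\de',L'}$ (because the $\ti f_i$ have no constant term), taking the supremum over $\un\ze\in K_{\Om^{*n}}^{\de,L}$ yields the desired inequality. The only step requiring genuine verification is the disc-containment claim, which is why the hypothesis is phrased in terms of $\Om_{4\de}=\O$ rather than $\Om_{\de}=\O$: applying Theorem~\ref{thm:4.7} at the shrunken radius $\de/2$ still needs the relevant absence of singularities at the origin.
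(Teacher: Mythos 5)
Your proof is correct and follows exactly the route the paper indicates (the paper only sketches it in one sentence: Cauchy inequality, the identity $\frac{\dd}{\dd\ze}(1*\hat f_1*\cdots*\hat f_n)=\hat f_1*\cdots*\hat f_n$, and the inverse Borel transform). Your careful justification of the disc containment $D(\un\ze,r)\subset K_{\Om^{*n}}^{\de/2,\,L+r}$ and the bookkeeping of constants fill in precisely the details the authors leave to the reader.
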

%%%%%%%%%%%%%%%%%%%%%%%%%%%%%%%%%%%%%%%%%%%%%%%%%%%%

In fact, one can cover the case $\hat f_1\in\hat\gR_{\Om_1}$, \ldots,
$\hat f_n\in\hat\gR_{\Om_n}$ with different \dfs's
$\Om_1,\ldots,\Om_n$ as well---see Theorem~\ref{thm:4.9}---, but we
only give details for the case of one \dfs\ so as to lighten the presentation.

% Our tool to prove Theorem~\ref{thm:4.7}
% is what we call $\gamma$-adapted deformations
% of the identity for a path $\gamma\in\Pi_{\Om^{*n}}$;
% %
% they will provide the analytical continuation
% of $1*\hat f_1 * \cdots * \hat f_n$ along~$\gamma$.

%%%%%%%%%%%%%%%%%%%%%%%%%%%%%%%%%%%%%%%%%%%%%%%%%%%%
%
\subsection{Notations and preliminaries}
%
%%%%%%%%%%%%%%%%%%%%%%%%%%%%%%%%%%%%%%%%%%%%%%%%%%%%

We fix an integer $n\ge1$ and a \dfs~$\Om$.
In view of Remark~\ref{rem:wlogupclos}, without loss of generality, we can suppose that~$\Om$ coincides with
its upper closure:
\beglab{eq:wlogOm}
\Om=\ti\Om.
\edla
Let $\rho>0$ be such that $\Om_{3\rho}=\O$. We set
\[ U \defeq \{\, \ze\in\C \mid \abs{\ze} < 3\rho \,\}. \]
For each $\ze\in U$, the path $\ga_\ze \col t\in[0,1] \mapsto t\ze$ is
$\Om$-allowed and hence has a lift $\uga_\ze$ on~$X_\Om$
starting at~$\uO_\Om$.  
Then $\gL(\ze) \defeq \uga_\ze(1) $ defines a holomorphic function
on~$U$ and induces an isomorphism
\beglab{eq:defcLuU}
\gL \col U \isom \uU, 
\qquad\text{where}\ens
\uU \defeq \gL(U) \subset X_\Om,
\edla
such that $\fp_\Om \circ \gL = \ID$.

%%%%%%%%%%%%%%%%%%%%%%%%%%%%%%%%%%%%%%%%%%%%%%%%%%%%%

%\begin{nota}	\label{notesimplexn}
%
Let us denote by~$\De_n$ the $n$-dimensional
simplex
\[
\De_n \defeq \{\, (s_1,\ldots,s_n)\in\R_{\geq0}^n \mid 
s_1 +\cdots + s_n \le 1 \,\}
\]
with the standard orientation, and by $[\De_n]\in\gE_n(\R^n)$ the corresponding integration
current.
For $\ze \in U$, we define a map $\gD(\ze)$ on a neighbourhood
of~$\De_n$ in~$\R^n$ by
\[
\gD(\ze) \col
\vec s = (s_1,\ldots,s_n) \mapsto 
\gD(\ze,\vec s\,) \defeq \big( \gL(s_1\ze),\ldots,\gL(s_n\ze) \big) \in
\uU^n \subset X_\Om^n
\]
and denote by $\gD(\ze)_\# [\De_n] \in \gE_n(X_\Om^n)$ the push-forward
of~$[\De_n]$ by~$\gD(\ze)$.
(See \cite{NLresur} for the notations and notions
related to integration currents.)
%\end{nota}

%%%%%%%%%%%%%%%%%%%%%%%%%%%%%%%%%%%%%%%%%%%%%%%%%%%%%

As in \cite{NLresur}, our starting point will be
\begin{lmm}   \label{lmn:formbeta}
Let $\hat f_1, \ldots, \hat f_n \in \hat\gR_\Om$ and
$ % \[ 
\be \defeq (\fp_\Om^*\hat f_1)\big(\un\ze_1\big) \cdots (\fp_\Om^*\hat f_n)\big(\un\ze_n\big) \,
\dd\un\ze_1 \wedge \cdots \wedge \dd\un\ze_n,
$, % \]
where we denote by $\dd\un\ze_1 \wedge \cdots \wedge \dd\un\ze_n$ the pullback
by $\fp_\Om^{\otimes n} \col X_\Om^n \to \C^n$
of the $n$-form $\dd\ze_1 \wedge \cdots \wedge \dd\ze_n$.
Then
\[
1*\hat f_1*\cdots *\hat f_n(\ze) =
\gD(\ze)_\# [\De_n](\be)
\quad \text{for $\ze \in U$}.
\]
\end{lmm}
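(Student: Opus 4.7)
The plan is to reduce the identity to a concrete iterated integral over the standard simplex, compute both sides of the claimed equality in that form, and verify that they agree.

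First, I would observe that since $\Om_{3\rho}=\O$, every $\hat f_i\in\hat\gR_\Om$ extends holomorphically from its disc of convergence to the whole of $U$ (continue along radial segments, which are $\Om$-allowed), and so does the convolution $1*\hat f_1*\cdots*\hat f_n$. Moreover $\gL\col U\isom\uU$ identifies $U$ with a neighbourhood of $\uO_\Om$ via $\fp_\Om$, and the lift of $\ze$ to $X_{\Om^{*n}}$ is the analogous trivial one. So it suffices to prove the classical identity
\[
1*\hat f_1*\cdots*\hat f_n(\ze)=\ze^n\int_{\De_n}\hat f_1(s_1\ze)\cdots\hat f_n(s_n\ze)\,\dd s_1\cdots\dd s_n
\qquad (\ze\in U),
\]
and then recognize the \rhs\ as $\gD(\ze)_\#[\De_n](\be)$.

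The first step I would do by induction on $n$. The case $n=1$ is just the substitution $\xi=s_1\ze$ in $1*\hat f_1(\ze)=\int_0^\ze\hat f_1(\xi)\,\dd\xi$. For the inductive step, writing $F_n=1*\hat f_1*\cdots*\hat f_n=F_{n-1}*\hat f_n$ and using the integral representation~\eqref{eqdefconvol} together with $\eta=t\ze$, one gets
\[
F_n(\ze)=\ze\int_0^1 F_{n-1}(t\ze)\,\hat f_n\big((1-t)\ze\big)\,\dd t.
\]
Applying the induction hypothesis to $F_{n-1}(t\ze)$ and changing variables $(u_1,\ldots,u_{n-1},t)\mapsto(s_1,\ldots,s_n)$ via $s_i=tu_i$ for $i<n$ and $s_n=1-t$ (whose Jacobian is $t^{-(n-1)}$, exactly cancelling the factor $t^{n-1}$ coming from $(t\ze)^{n-1}$) transforms the integration domain $\De_{n-1}\times[0,1]$ into $\De_n$ and yields the desired formula. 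This change of variables is the only slightly delicate point of the argument.

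For the second step, I would unwind the definition of the push-forward current. Since $\fp_\Om\circ\gL=\ID$, the map $\gD(\ze)$ satisfies $\un\ze_i\circ\gD(\ze)=s_i\ze$, so its pullback on the distinguished $n$-form gives
\[
\gD(\ze)^*(\dd\un\ze_1\wedge\cdots\wedge\dd\un\ze_n)=\ze^n\,\dd s_1\wedge\cdots\wedge\dd s_n,
\]
while $\gD(\ze)^*(\fp_\Om^*\hat f_i)(\vec s)=\hat f_i(s_i\ze)$. Hence
\[
\gD(\ze)_\#[\De_n](\be)=[\De_n]\big(\gD(\ze)^*\be\big)=\ze^n\int_{\De_n}\hat f_1(s_1\ze)\cdots\hat f_n(s_n\ze)\,\dd s_1\cdots\dd s_n,
\]
which coincides with the expression obtained in the first step. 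Comparing with $\fp_{\Om^{*n}}^*(1*\hat f_1*\cdots*\hat f_n)(\gL_{\Om^{*n}}(\ze))=1*\hat f_1*\cdots*\hat f_n(\ze)$ for $\ze\in U$ yields the lemma.
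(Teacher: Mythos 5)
Your proof is correct and takes essentially the same route as the paper, which simply observes that the lemma is a restatement of the iterated-integral formula~\eqref{eq:initformul} and refers to \cite{NLresur} for details; you supply exactly those details (the induction with the change of variables $s_i=tu_i$, $s_n=1-t$, and the unwinding of the push-forward via $\gD(\ze)^*\be$). Both the Jacobian computation and the identification $\gD(\ze)^*(\dd\un\ze_1\wedge\cdots\wedge\dd\un\ze_n)=\ze^n\,\dd s_1\wedge\cdots\wedge\dd s_n$ check out.
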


\begin{proof}
This is just another way of writing the formula
\beglab{eq:initformul}
1*\hat f_1*\cdots *\hat f_n(\ze) =
\ze^n\int_{\De_n} 
\hat f_1(\ze s_1) \cdots \hat f_n(\ze s_n)
\ \dd s_1 \cdots \dd s_n.
\edla
See \cite{NLresur} for the details.
\end{proof}

\begin{nota}
We set
\begin{align}
\label{eq:defcNze}
\cN(\ze) &\defeq \big\{ \big(\un\ze_1,\ldots,\un\ze_n\big) \in X_\Om^n \mid 
\fp_\Om\big(\un\ze_1\big) + \cdots + \fp_\Om\big(\un\ze_n\big) = \ze \big\}
\ens\text{for $\ze\in\C$,}
\\[1ex]
\label{eq:defcNi}
\cN_j &\defeq \big\{ \big(\un\ze_1,\ldots,\un\ze_n\big) \in X_\Om^n \mid \un\ze_j = \uO_\Om \big\}
\ens\text{for $1\le j\le n$.}
\end{align}
\end{nota}

%%%%%%%%%%%%%%%%%%%%%%%%%%%%%%%%%%%%%%%%%%%%%%%%%%%%
%
\subsection{$\ga$-adapted deformations of the identity}
%
%%%%%%%%%%%%%%%%%%%%%%%%%%%%%%%%%%%%%%%%%%%%%%%%%%%%

% Let $L>0$ and $\de \in (0,\rho]$
%
Let us consider a path $\ga \col [0,1] \to \C$ in $\Pi_{\Om^{*n}}$
for which there exists $a\in(0,1)$ such that 
\begin{eq-text}\label{4.1}
$\ga(t) = \frac{t}{a} \ga(a)$ for $t\in[0,a]$, \quad
$\abs{\ga(a)} = \rho$, \quad
$\ga|_{[a,1]}$ is $C^1$.
\end{eq-text}

We now introduce the notion of $\ga$-adapted deformation of the
identity, which is a slight generalization of the $\ga$-adapted
origin-fixing isotopies which appear in \cite[Def.~5.1]{NLresur}.
%
% This will allow us to follow the analytical continuation of a
% convolution product along~$\ga$.
%
\begin{dfn}
A \emph{$\ga$-adapted deformation of the identity}
is a family $(\Psi_t)_{t\in[a,1]}$ of maps 
\[\Psi_t \col \uV \to X_\Om^n, \quad\text{for $t\in[a,1]$,}\]
where $\uV \defeq \gD\big( \ga(a) \big)(\De_n) \subset X_\Om^n$,
such that $\Psi_a = \ID$, 
the map
$\big(t,\vuze\,\big) \in [a,1] \times \uV \mapsto
\Psi_t\big(\vuze\,\big) \in X_\Om^n$
is locally Lipschitz,
and for any $t\in [a,1]$ and $j=1,\ldots,n$,
\beglab{eq:Psitinclus}
\Psi_t\big( \uV \cap
\cN\big(\ga(a)\big)\big) % & 
\subset \cN\big(\ga(t)\big), \qquad % \\
\Psi_t\big( \uV \cap \cN_j\big) % & 
\subset \cN_j
\edla
(with the notations \eqref{eq:defcNze}--\eqref{eq:defcNi}).
\end{dfn}

%%%%%%%%%%%%%%%%%%%%%%%%%%%%%%%%%%%%%%%%%%%%%%%%%%%%

Let~$\uga$ denote the lift of~$\ga$ in~$X_\Om$ starting at~$\uO_\Om$.
The analytical continuation along~$\uga$ of a convolution product can
be obtained as follows:

\begin{prp}[\cite{NLresur}]   \label{prp:DeformInteg}
If $(\Psi_t)_{t\in[a,1]}$ is a $\ga$-adapted deformation of the
identity, then
\beglab{eqconthatgugat}
\fp_{\Om^{*n}}^* \big( 1*\hat f_{1}*\cdots *\hat f_{n}\big )\big( \uga(t) \big) = 
\big( \Psi_t \circ \gD\big( \ga(a) \big) \big)_\# [\De_n](\be)
\qquad\text{for $t\in[a,1]$}
\edla
for any $\hat f_1, \ldots, \hat f_n \in \hat\gR_\Om$,
with~$\be$ as in Lemma~\ref{lmn:formbeta}.
\end{prp}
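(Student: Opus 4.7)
My plan is to verify that the two sides of~\eqref{eqconthatgugat} agree at $t=a$ and satisfy the same first-order ODE in~$t$ thereafter, then conclude by the fundamental theorem of calculus. The structural input driving the proof is that $\be$ is a holomorphic $(n,0)$-form on the $n$-complex-dimensional manifold $X_\Om^n$, hence $d\be=0$, so Stokes' theorem is at our disposal.

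For the initial condition: by~\eqref{4.1} the path $\ga|_{[0,a]}$ is the radial segment from~$0$ to $\ga(a)$, so $\uga(a)=\gL(\ga(a))\in\uU$ and the left-hand side at $t=a$ equals $1*\hat f_1*\cdots*\hat f_n(\ga(a))$. Since $\Psi_a=\ID$, the right-hand side at $t=a$ equals $\gD(\ga(a))_\#[\De_n](\be)$, which equals $1*\hat f_1*\cdots*\hat f_n(\ga(a))$ by Lemma~\ref{lmn:formbeta}. Both sides are locally Lipschitz in~$t$, using the Lipschitz continuity of $\Psi_t$ for the right-hand side and the $C^1$-continuation of convolution products along~$\uga$ for the left-hand side, so it suffices to compare derivatives almost everywhere.

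To compute the derivative of the right-hand side, set $H(s,\vec r) \defeq \Psi_s\big(\gD(\ga(a))(\vec r)\big)$ on $[a,1]\times\De_n$. Applying Stokes' theorem to $H_\#([a,t]\times[\De_n])$ and using $d\be=0$ yields
\[
(\Psi_t\circ\gD(\ga(a)))_\#[\De_n](\be) - \gD(\ga(a))_\#[\De_n](\be)
= \pm\, H_\#\big([a,t]\times\partial[\De_n]\big)(\be).
\]
The boundary $\partial\De_n$ splits into the coordinate faces $F_j=\{s_j=0\}$, $j=1,\ldots,n$, and the upper face $F_0=\{s_1+\cdots+s_n=1\}$. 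On~$F_j$, $\gD(\ga(a))$ lands in $\cN_j$, and the first inclusion in~\eqref{eq:Psitinclus} ensures $\Psi_s$ preserves $\cN_j$, so the $j$-th component of $\fp_\Om\circ H$ is identically zero on $[a,t]\times F_j$, killing the factor $d\un\ze_j$ of~$\be$. Hence only~$F_0$ contributes. On $[a,t]\times F_0$, the second inclusion in~\eqref{eq:Psitinclus} gives $\sum_{j=1}^n \fp_\Om\big(H(s,\vec r)_j\big) = \ga(s)$, and therefore $\sum_{j=1}^n d\un\ze_j = \ga'(s)\,ds$ on this chain. Substituting $d\un\ze_n = \ga'(s)\,ds - \sum_{j<n} d\un\ze_j$ into $d\un\ze_1\wedge\cdots\wedge d\un\ze_n$, the wedge collapses to $(-1)^{n-1}\ga'(s)\,ds\wedge d\un\ze_1\wedge\cdots\wedge d\un\ze_{n-1}$, so the $t$-derivative of the $F_0$-contribution equals $\ga'(t)$ (up to a sign) times the integral over the slice $\Psi_t(\gD(\ga(a))(F_0))\subset\cN(\ga(t))$ of the holomorphic $(n-1)$-form $\prod_{j=1}^n(\fp_\Om^*\hat f_j)\,d\un\ze_1\wedge\cdots\wedge d\un\ze_{n-1}$. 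Identifying this slice integral with $\fp_{\Om^{*(n-1)}}^*(\hat f_1*\cdots*\hat f_n)(\uga(t))$ via Lemma~\ref{lmn:formbeta} applied inside $\cN(\ga(t))$ matches the derivative of the left-hand side, since $\frac{d\,}{d\ze}(1*\hat f_1*\cdots*\hat f_n) = \hat f_1*\cdots*\hat f_n$ and analytic continuation commutes with differentiation.

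The main obstacle is precisely this last identification. One must argue that the $(n-1)$-cycle $\Psi_t(\gD(\ga(a))(F_0))$ inside $\cN(\ga(t))$ is homologous, in a class along which the $\fp_\Om^*\hat f_j$'s remain holomorphic, to the ``standard'' simplex appearing in Lemma~\ref{lmn:formbeta} at the point $\uga(t)$. This is essentially the induction step reducing an $n$-fold convolution to an $(n-1)$-fold one, and can be handled either by induction on~$n$ or by an internal Stokes-theoretic homotopy inside each slice, exploiting the closedness of the restricted holomorphic $(n-1)$-form $\prod_{j=1}^n(\fp_\Om^*\hat f_j)\,d\un\ze_1\wedge\cdots\wedge d\un\ze_{n-1}$.
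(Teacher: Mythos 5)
Your Stokes set-up is the right machinery, and the two vanishing mechanisms you identify are exactly the ones that drive the proof in \cite{NLresur}: $\be$ is a holomorphic $(n,0)$-form on the $n$-dimensional complex manifold $X_\Om^n$, hence closed; the contribution of $[a,t]\times F_j$ dies because the $j$-th component is pinned at $\uO_\Om$ by the $\cN_j$-part of~\eqref{eq:Psitinclus}; and on $[a,t]\times F_0$ the constraint $\sum_j\fp_\Om(\un\ze_j^s)=\ga(s)$ collapses the wedge as you compute. (Incidentally, you have swapped the roles of the ``first'' and ``second'' inclusions of~\eqref{eq:Psitinclus}.)

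The gap is the one you flag yourself, and it is not a technicality: your reduction to ``equal derivatives'' requires identifying the slice integral over the $(n-1)$-chain $\Psi_t\circ\gD(\ga(a))\big|_{F_0}$ with $\operatorname{cont}_\ga\big(\hat f_1*\cdots*\hat f_n\big)\big(\uga(t)\big)$, and that identification is a statement of exactly the same nature as the proposition being proved (a chain-integral representation of an analytically continued $n$-fold convolution at a point far from the origin). Lemma~\ref{lmn:formbeta} cannot be ``applied inside $\cN(\ga(t))$'': it is a statement about $\ze$ in the disc $U$ around the origin, and at $\uga(t)\notin\uU$ there is no ``standard simplex'' available to which your slice could be declared homologous without already knowing the continuation of $\hat f_1*\cdots*\hat f_n$ as a chain integral --- which is (the $\ze$-derivative of) the very identity under proof. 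Nor does differentiating again help, since the slice form is not closed on $X_\Om^n$ and the circularity persists one dimension down. The proof the paper refers to (\cite[Prop.~5.2]{NLresur}) avoids this loop: it uses your Stokes computation not to differentiate in~$t$ but to establish \emph{homotopy invariance} of the chain integral among chains whose boundary faces respect $\cN_j$ and $\cN(\ze)$; it then shows that, for $\ze$ in a small disc around $\ga(t_0)$, suitably modified chains with endpoint constraint $\cN(\ze)$ yield a \emph{holomorphic germ} in~$\ze$, and concludes by noting that the set of $t$ at which this germ coincides with the germ of the analytic continuation of $1*\hat f_1*\cdots*\hat f_n$ is nonempty (it contains $a$, by Lemma~\ref{lmn:formbeta}), open and closed in $[a,1]$. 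To repair your argument you would have to either reproduce that germ-plus-connectedness scheme, or set up and prove a genuine simultaneous induction covering the $(n-1)$-simplex representation of $\hat f_1*\cdots*\hat f_n$; as written, the key step is asserted rather than proved.
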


%%%%%%%%%%%%%%%%%%%%%%%%%%%%%%%%%%%%%%%%%%%%%%%%%%%%

\begin{proof}
See the proof of \cite[Prop.~5.2]{NLresur}.
\end{proof}

%%%%%%%%%%%%%%%%%%%%%%%%%%%%%%%%%%%%%%%%%%%%%%%%%%%%

Note that the \rhs\ of~\eqref{eqconthatgugat} must be interpreted as
\beglab{eqconthatgugatbis}
\int_{\De_n} 
(\fp_\Om^*\hat f_1)\big(\un\ze_1^t\big) \cdots (\fp_\Om^*\hat f_n)\big(\un\ze_n^t\big)
\Det\bigg[ \frac{\pa\ze_i^t}{\pa s_j}\bigg]_{1\le i,j\le n}
\, \dd s_1 \cdots \dd s_n
\edla
with the notation
\beglab{eq:notunzeitzeit}
\big( \un\ze_1^t, \ldots, \un\ze_n^t \big) \defeq
\Psi_t\circ \gD\big( \ga(a) \big), \qquad
\ze_i^t \defeq \fp_\Om \circ \un\ze_i^t
\ens\text{for $1\le i\le n$}
\edla
(each function $\ze_i^t$ is Lipschitz on~$\De_n$ and
Rademacher's theorem ensures that it is differentiable almost
everywhere on~$\De_n$, with bounded partial derivatives).

The following is the key estimate:

\begin{thm}\label{thm:4.6}
Let $\de\in(0,\rho)$ and $L>0$.
Let $\ga \in \Pi_{\Om^{*n}}^{\de,L}$ satisfy~\eqref{4.1} and let
\begin{equation}\label{4.5}
\de'(t) \defeq  \rho\, \ee^{-2\sqrt{2}\delta^{-1}L( \restr{\ga}{[a,t]} )},
\qquad
c(t) \defeq 
\rho \,\ee^{3\de\ii L( \restr{\ga}{[a,t]} )}
\qquad\text{for $t\in[a,1]$.}
\end{equation}
  Then there exists a $\ga$-adapted deformation of the identity
  $(\Psi_t)_{t\in[a,1]}$ such that
\begin{equation}\label{4.3}
\Psi_{t} \circ \gD\big( \ga(a) \big)(\De_n)
\subset
\bigcup_{L_1+\cdots+L_n=L(\ga_{|t})}
K_{\Omega}^{\de'(t),L_{1}}
\times\cdots\times
K_{\Omega}^{\de'(t),L_{n}}
\quad\text{for $t\in[a,1]$.}
\end{equation}
Further, with the notation~\eqref{eq:notunzeitzeit},
the partial derivatives $\pa\ze_i^t/\pa s_j$ satisfy
\beglab{4.4}
\Big|
\Det\Big[ \frac{\pa\ze_i^t}{\pa s_j}\Big]_{1\le i,j\le n}
\Big|
\leq
\big(c(t)\big)^n
\quad\text{a.e.\ on~$\De_n$}
\edla
for each $t\in[a,1]$.
\end{thm}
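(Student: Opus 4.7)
The construction parallels the approach to $\ga$-adapted origin-fixing isotopies in \cite[\S5]{NLresur}, adapting it from $\Sig$-continuability to the $\Om^{*n}$-continuable setting. The plan is to produce $(\Psi_t)_{t\in[a,1]}$ as the time-$t$ flow of a time-dependent, locally Lipschitz vector field on an open neighborhood of $\uV$ in $X_\Om^n$, obtained by lifting via the local biholomorphism $\fp_\Om^{\otimes n}$ a vector field on $\C^n$ that prescribes the velocity $\dot\ze_i^t$ of each projected component $\ze_i^t\defeq\fp_\Om(\un\ze_i^t)$.

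At each $(t,\vuze^t)$ I would set
$$
\dot\ze_i^t \defeq \al_i(t,\vuze^t)\,\ga'(t) + v_i(t,\vuze^t),
\qquad \sum_{i=1}^n \al_i \equiv 1,\quad \sum_{i=1}^n v_i \equiv 0,
$$
with $\al_i\ge 0$ and each correction $v_i$ assembled from locally supported ``repulsion fields'' attached to those elements of $\ti\Om_{\ell_i(t)}$ that are close to $\ze_i^t$, where $\ell_i(t)\defeq s_i\abs{\ga(a)}+\int_a^t\abs{\dot\ze_i^s}\,\dd s$ is the accumulated length of the $i$-th trajectory. Arranging $\al_i$ and $v_i$ to vanish in a neighborhood of $\{\un\ze_i=\uO_\Om\}$ yields the second inclusion in \eqref{eq:Psitinclus}, while $\sum_i\dot\ze_i^t=\ga'(t)$ yields the first.

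For \eqref{4.3}, the construction is designed so that $\sum_i\ell_i(t)\le L(\ga_{|t})$ and each individual trajectory $s\mapsto \ze_i^s$ (concatenated with the initial segment $[0,s_i\ga(a)]$) is an $\Om$-allowed path lifting to a single endpoint~$\un\ze_i^t$, with $\big(\ell_i(t),\ze_i^t\big)$ kept at distance $\ge\de'(t)$ from $\cS_\Om$. Choosing any decomposition $\sum L_i=L(\ga_{|t})$ with $L_i\ge\ell_i(t)$ then places $\un\ze_i^t\in K_\Om^{\de'(t),L_i}$. The exponent $2\sqrt{2}\,\de\ii$ in $\de'(t)$ emerges from a Grönwall estimate: the worst-case rate of decrease of $\dist\big((\ell_i(t),\ze_i^t),\cS_\Om\big)$ is bounded by a constant multiple of $\de\ii$ times the current distance, which upon integration in $t$ gives the stated exponential decay.

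For \eqref{4.4}, I would invoke the variational equation
$$
\frac{\dd\;}{\dd t}\,\frac{\pa\ze_i^t}{\pa s_j} = \sum_k \big(\pa_{\ze_k}\dot\ze_i^t\big)\,\frac{\pa\ze_k^t}{\pa s_j},\qquad \frac{\pa\ze_i^a}{\pa s_j}=\de_{ij}\,\ga(a),
$$
combined with a Lipschitz bound on the vector field in $\vuze$ controlled by a constant multiple of $\de\ii$; Grönwall then bounds the operator norm of the flow's differential by $\ee^{3\de\ii L(\restr{\ga}{[a,t]})}$, so that $\bigl|\det[\pa\ze_i^t/\pa s_j]\bigr|\le \bigl(c(t)\bigr)^n$. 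The main technical obstacle is the joint design of the weights $\al_i$ and corrections $v_i$: they must simultaneously satisfy $\sum\al_i\equiv 1$, $\sum v_i\equiv 0$, vanish near each $\{\un\ze_i=\uO_\Om\}$, and individually deflect each $\ze_i^t$ from $\cS_\Om$. This demands a consistent partition of a neighborhood of $\vuze^t$ in $X_\Om^n$ assigning nearby singularities of~$\Om$ to coordinates, which is possible precisely because $\ga\in\Pi_{\Om^{*n}}^{\de,L}$ forbids a global coincidence $\ga(t)=\om_1+\cdots+\om_n$ at matching length levels. Calibrating the Lipschitz constants of the $v_i$'s so that the avoidance exponent $2\sqrt{2}\,\de\ii$ in $\de'(t)$ stays strictly below the Jacobian exponent $3\,\de\ii$ in $c(t)$ is the crucial quantitative balance underlying~\eqref{4.5}.
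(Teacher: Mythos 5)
Your architecture differs from the paper's in a way that creates a genuine gap. The paper does \emph{not} use repulsion/correction fields at all: it works in the extended phase space $(\R\times\C)^n$ (carrying the length coordinate as part of the state), sets $\eta(v)\defeq\dist\big(v,\{(0,0)\}\cup\cS_\Om\big)$, $D(t,\vec v\,)\defeq\sum_j\eta(v_j)+\big|\ti\ga(t)-\sum_j v_j\big|$, and moves every component \emph{parallel to} $\ti\ga'(t)$ with weight $\eta(v_j)/D$. Singularity avoidance comes from the weight vanishing on $\ov\cS_\Om$ (so $\cM_\Om^n$ and each $\cN_j$ are flow-invariant), and the constraint $\sum_j v_j=\ti\ga(t)$ is not imposed pointwise but propagated by a Gr\"onwall uniqueness argument on $h(t)=\big|\ti\ga(t)-\sum_j v_j(t)\big|$. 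Your design --- $\dot\ze_i=\al_i\ga'+v_i$ with $\sum\al_i\equiv1$, $\al_i\ge0$, $\sum v_i\equiv0$ and transverse repulsion terms --- is structurally incompatible with~\eqref{4.3}: by the triangle inequality $\sum_i\abs{\dot\ze_i^t}\ge\abs{\ga'(t)}$, with \emph{strict} inequality wherever some $v_i$ is not a nonnegative tangential multiple of $\ga'$; hence $\sum_i\ell_i(t)>L(\ga_{|t})$ whenever the repulsion is active, and no decomposition $L_1+\cdots+L_n=L(\ga_{|t})$ with $\un\ze{}_i^t\in K_\Om^{\de'(t),L_i}$ is available. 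The paper's weights satisfy $\sum_j\eta(v_j)/D\le1$ (not $=1$), which is exactly what yields $\sum_j\abs{\la_j'(t)}\le\abs{\ga'(t)}$ and hence the length budget.

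Beyond that, the heart of your plan --- the consistent construction of the $v_i$'s --- is explicitly left open, and the quantitative claims cannot be ``calibrated'' afterwards: the constants in~\eqref{4.5} are part of the statement. In the paper they fall out of the explicit field ($\sqrt2$ from $\abs{\ti\ga'}=\sqrt2\,\abs{\ga'}$, the factor $2$ in $\de'(t)$ from a case split on the size of $\eta(v_j(a))$, and $3$ from the Lipschitz constant of $\vec X$ in $\vec v$). Your Gr\"onwall mechanism for the lower bound on $\dist(\cdot,\cS_\Om)$ requires the speed of $v_j$ to vanish linearly at $\ov\cS_\Om$, which your advection term $\al_i\ga'$ does not do unless $\al_i$ is chosen essentially as the paper's $\eta(v_i)/D$ --- at which point the repulsion terms are superfluous. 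You also omit the case of components starting near the origin, where $\eta(v_j(a))$ is small and the exponential lower bound is useless; the paper handles this by showing such components stay in the ball $\abs{v}<3\rho/2$, where $\Om_{3\rho}=\O$ forces $\dist(v,\cS_\Om)\ge3\rho/2$. Finally, the variational equation you invoke presupposes differentiability of the field in $\vec v$, which fails for distance functions; the paper instead runs Gr\"onwall directly on $V(t)=\sum_j\abs{\ze_j^t(\vec s\,)-\ze_j^t(\vec s\,')}$ and concludes by Rademacher plus the Hadamard-type bound $\abs{\Det}\le\prod_j\big(\sum_i\abs{\pa\ze_i^t/\pa s_j}\big)$.
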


%%%%%%%%%%%%%%%%%%%%%%%%%%%%%%%%%%%%%%%%%%%%%%%%%%%%

\begin{proof}[Proof that Theorem~\ref{thm:4.6} implies Theorem~\ref{thm:4.7}]
Let $\de,L>0$. We will show that~\eqref{4.6} holds with
\[
\de' \defeq \min\big\{ \de, \rho\, \ee^{-4\sqrt{2}(1+\de\ii L)} \big\},
\quad
c \defeq \max\big\{ 2\rho, \rho \,\ee^{6(1+\de\ii L)} \big\},
\quad
\text{where}\ens \rho \defeq \tfrac{4}{3}\de.
\]
Let~$\Om$ be a \dfs\ such that $\Om_{4\de}=\O$. 
Without loss of generality we may suppose that $\Om=\ti\Om$.

In view of formula~\eqref{eq:initformul}, the inequality~\eqref{4.6}
holds for $\uze \in K_{\Om^{*n}}^{\de,L} \cap \uU$, where~$\uU$ is defined
by~\eqref{eq:defcLuU},
because the Lebesgue measure of~$\De_n$ is $1/n!$.

Let $\un\ze\in K_{\Om^{*n}}^{\de,L}\setminus\uU$.
We can write $\un\ze=\uga(1)$ with $\ga\in\Pi_{\Om^{*n}}^{\de,L}$, 
assuming without loss of generality that the first two conditions
in~\eqref{4.1} hold. 
If the third condition in~\eqref{4.1} does not hold, \ie
if $\ga|_{[a,1]}$ is not $C^1$, then we use a sequence of paths
$\ga_k \in \Pi_{\Om^{*n}}^{\de/2,L+\de}$ such that
$\ga_k|_{[0,a]} =\ga|_{[0,a]}$, $\ga_k(1)=\ga(1)$,
$\ga_k|_{[a,1]}$ is $C^1$
and $\sup_{t\in[a,1]} |\ga(t)-\ga_k(t)| \to 0$ as $k\to\infty$;
for~$k$ large enough one has $\un{\ga_k}(1)=\un\ze$, thus one then can
replace~$\ga$ by~$\ga_k$.
Hence we can assume that~\eqref{4.1} holds. 
%
%
% Then, there exists a family $\{\ga_{\varepsilon}\}_{\varepsilon>0}$ of paths
% $\ga_{\varepsilon}\in\Pi_{\Om^{*n}}^{\de_{\varepsilon},L_{\varepsilon}}$
% such that $ \un{\ga_{\varepsilon}}(1)=\un\ze, $
% $ \ga_{\varepsilon }|_{[0,a]} =\ga|_{[0,a]}, $
% $\ga_{\varepsilon }|_{[a,1]}$ is $C^1$ and $L_{\varepsilon}\to L$,
% $\de_{\varepsilon}\to\de$,
% $ \sup_{t\in[a,1]} |\ga-\ga_{\varepsilon}| \to 0 $
% $(\varepsilon\to0)$. 
% Since \eqref{eqconthatgugat} can be rewritten
% as~\eqref{eqconthatgugatbis},
% %
% the following estimate follows
% from Theorem \ref{thm:4.6} 
% for each $\varepsilon>0$:
% $$
% %
% \big|
% %
% \fp_{\Om^{*n}}^* \hat{g}
% (\un\ze)
% %
% \big|
% %
% \leq
% %
% \frac{c ^n}{n!}
% %
% %\ \sup_{\substack{ L\pp1,\ldots,L\pp n>0 \\ L\pp 1+\cdots+L\pp n=L}}
% \ \sup_{L_1+\cdots+L_n=L_{\varepsilon }}
% %
% \big\|\, \fp_\Om^{*} \hat f_1 \big\|_{\Om}^{\de',L_1}
% %
% \, \cdots
% %
% \big\|\, \fp_\Om^{*} \hat f_n \big\|_{\Om}^{\de',L_n},
% %
% $$
% where 
% $
% \delta'=\rho\, e^{-2\sqrt{2}\delta_{\varepsilon }^{-1}L_{\varepsilon }}
% $
% and
% $c=\rho \,\ee^{3\sqrt{2}\de_{\varepsilon }\ii L_{\varepsilon }}$.
%
Let~$(\Psi_t)_{[t\in[a,1]]}$ denote the $\ga$-adapted deformation of
the identity provided by Theorem~\ref{thm:4.6}, possibly with
$(\de,L)$ replaced by $(\de/2,L+\de)$.
Proposition~\ref{prp:DeformInteg} shows that, 
for $\hat f_1, \ldots, \hat f_n \in \hat\gR_\Om$,
$\fp_{\Om^{*n}}^* \big( 1*\hat f_{1}*\cdots *\hat f_{n}\big )(\uze)$
can be written as~\eqref{eqconthatgugatbis} with $t=1$,
and \eqref{4.3}--\eqref{4.4} then show that~\eqref{4.6} holds because
$\de'(t)\ge\de'$ and $c(1)\le c$.
Therefore, \eqref{4.6} holds on $K_{\Om^{*n}}^{\de,L}\setminus\uU$
too.
\end{proof}

In fact, in view of the proof of Theorem~\ref{thm:4.6} given below,
one can give the following generalization of Theorem \ref{thm:4.7}:

\begin{thm}\label{thm:4.9}
Let $\de,L$ be positive real numbers.
Then there exist positive constants~$c$ and~$\de'$ such that, 
for every integer $n\ge1$ and for all \dfs\ 
$\Om_1,\ldots,\Om_n$ with $\Om_{j,4\de}=\O$
$(j=1,\cdots,n)$ 
and $\hat f_1\in \hat\gR_{\Om_1}, \ldots, \hat f_n \in \hat\gR_{\Om_n}$,
the function $1*\hat f_{1}*\cdots *\hat f_{n}$ belongs to $\hat\gR_{\Om}$,
where $\Om \defeq \Om_1*\cdots*\Om_n$,
and % satisfies
\beglab{4.8}
\big|
\fp_{\Om}^* \big( 1*\hat f_{1}*\cdots *\hat f_{n}\big )
(\un\ze)
\big|
\leq
\frac{c ^n}{n!}
%
%\ \sup_{\substack{ L\pp1,\ldots,L\pp n>0 \\ L\pp 1+\cdots+L\pp n=L}}
\ \sup_{L_1+\cdots+L_n=L}
\big\|\, \fp_{\Om_1}^{*} \hat f_1 \big\|_{\Om_1}^{\de',L_1}
\, \cdots
\big\|\, \fp_{\Om_n}^{*} \hat f_n \big\|_{\Om_n}^{\de',L_n}
\quad\text{for} \ens \un\ze \in K_{\Om}^{\de,L}.
\edla
\end{thm}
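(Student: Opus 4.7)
\medskip

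The plan is to run the very same integral‐and‐deformation argument used for Theorem~\ref{thm:4.7}, but to pass each factor through its own projection $\fq_j\col X_\Om \to X_{\Om_j}$ at the very last moment, so that the bound on the $j$-th factor involves a seminorm on $X_{\Om_j}$ rather than on $X_\Om$. Set $\Om\defeq\Om_1*\cdots*\Om_n$. Iterated application of Theorem~\ref{thm:DO} (using that the constant~$1$ is entire, hence in every $\hat\gR_{\Om_j}$) immediately gives the qualitative statement $1*\hat f_1*\cdots*\hat f_n\in\hat\gR_\Om$. For every $j$ the inclusion $\Om_j\subset\Om$ yields $\Pi_\Om\subset\Pi_{\Om_j}$, so $X_\Om$ is $\Om_j$-endless and Theorem~\ref{lemuniversalXOm} furnishes a unique morphism $\fq_j\col(X_\Om,\fp_\Om,\uO_\Om)\to(X_{\Om_j},\fp_{\Om_j},\uO_{\Om_j})$ with $\fp_\Om=\fp_{\Om_j}\circ\fq_j$. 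For any $\hat f_j\in\hat\gR_{\Om_j}$ we therefore have the identity
\[
\fp_\Om^*\hat f_j \;=\; \fq_j^*\bigl(\fp_{\Om_j}^*\hat f_j\bigr)\quad\text{on }X_\Om.
\]

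The crucial comparison is the following: since $\cS_{\Om_j}\subset\cS_\Om$, any path belonging to $\Pi_\Om^{\de',L'}$ also belongs to $\Pi_{\Om_j}^{\de',L'}$ (the distance condition can only improve upon replacing $\cS_\Om$ by the smaller $\cS_{\Om_j}$), and its lift to $X_{\Om_j}$ is obtained by applying~$\fq_j$ to its lift in $X_\Om$. Consequently
\[
\fq_j\bigl(K_\Om^{\de',L'}\bigr) \;\subset\; K_{\Om_j}^{\de',L'},
\qquad\text{hence}\qquad
\bigl\lvert(\fp_\Om^*\hat f_j)(\un\ze)\bigr\rvert
\;\le\; \bigl\lVert\fp_{\Om_j}^*\hat f_j\bigr\rVert_{\Om_j}^{\de',L'}
\quad\text{for }\un\ze\in K_\Om^{\de',L'}.
\]

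Now I would copy verbatim the reduction given for Theorem~\ref{thm:4.7}: given $\un\ze\in K_\Om^{\de,L}$, write $\un\ze=\uga(1)$ for some $\ga\in\Pi_\Om^{\de,L}$ satisfying~\eqref{4.1} (up to an approximation argument replacing $\de$ by $\de/2$ and $L$ by $L+\de$). Apply Theorem~\ref{thm:4.6}, which produces a $\ga$-adapted deformation of the identity $(\Psi_t)_{t\in[a,1]}$ in $X_\Om^n$ satisfying \eqref{4.3} and \eqref{4.4}. Proposition~\ref{prp:DeformInteg} yields the integral representation \eqref{eqconthatgugatbis} for the analytic continuation of $1*\hat f_1*\cdots*\hat f_n$ along $\uga$. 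Substituting $\fp_\Om^*\hat f_j=\fq_j^*\fp_{\Om_j}^*\hat f_j$, then using \eqref{4.3} together with the displayed comparison to dominate each factor of the integrand by $\bigl\lVert\fp_{\Om_j}^*\hat f_j\bigr\rVert_{\Om_j}^{\de'(1),L_j}$, bounding the Jacobian by $c(1)^n$ through \eqref{4.4}, and using that $\De_n$ has Lebesgue measure $1/n!$, one arrives at \eqref{4.8} with exactly the constants
\[
\de'\defeq\min\bigl\{\de,\,\rho\,\ee^{-4\sqrt2(1+\de\ii L)}\bigr\},
\qquad
c\defeq\max\bigl\{2\rho,\,\rho\,\ee^{6(1+\de\ii L)}\bigr\},
\qquad
\rho\defeq\tfrac43\de.
\]

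The only conceptual obstacle is verifying that the deformation constructed in Theorem~\ref{thm:4.6} on the single Riemann surface $X_\Om$ still controls the $j$-th factor through the auxiliary surface $X_{\Om_j}$; this is precisely what the projection $\fq_j$ achieves, and the distance comparison $\cS_{\Om_j}\subset\cS_\Om$ is exactly what guarantees that the inclusion \eqref{4.3} survives under $\fq_j$. Everything else---the approximation to reduce to a $C^1$ path after time $a$, the choice of $\rho=\tfrac43\de$ so that $\Om_{j,3\rho}=\O$ holds for every $j$ (which follows from the hypothesis $\Om_{j,4\de}=\O$), and the explicit manipulation of the constants---is identical to the proof of Theorem~\ref{thm:4.7}.
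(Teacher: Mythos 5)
Your reduction framework is sound in its outer layers: the qualitative membership $1*\hat f_1*\cdots*\hat f_n\in\hat\gR_\Om$ via Theorem~\ref{thm:DO}, the identity $\fp_\Om^*\hat f_j=\fq_j^*(\fp_{\Om_j}^*\hat f_j)$, and the comparison $\fq_j\big(K_\Om^{\de',L'}\big)\subset K_{\Om_j}^{\de',L'}$ coming from $\cS_{\Om_j}\subset\cS_\Om$ are all correct (modulo a slip of wording: it is $X_{\Om_j}$ that is $\Om$-endless, not $X_\Om$ that is $\Om_j$-endless, which is what gives the morphism $\fq_j\col X_\Om\to X_{\Om_j}$). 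The gap is in the step ``Apply Theorem~\ref{thm:4.6}''. That theorem is not applicable as a black box here: its hypothesis is $\ga\in\Pi_{\Om'^{*n}}^{\de,L}$ for a \emph{single} \dfs~$\Om'$ containing all the $\hat f_j$'s, whereas Theorem~\ref{thm:4.9} only supplies $\ga\in\Pi_{\Om_1*\cdots*\Om_n}^{\de,L}$, and $\Om_1*\cdots*\Om_n$ is in general strictly smaller than $\Om'^{*n}$ for any admissible~$\Om'$ (already for $n=2$ with $\Om_1=\Om(\{1\})$, $\Om_2=\Om(\{10\})$: the set $\Om_1*\Om_2$ omits $2$ and $20$, which belong to $(\Om_1\cup\Om_2)^{*2}$, and no single $\Om'$ satisfies $\Om'^{*2}=\Om_1*\Om_2$). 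A path $\ga\in\Pi_{\Om_1*\cdots*\Om_n}^{\de,L}$ may pass arbitrarily close to $\cS_{\Om'^{*n}}$, and then the crucial lower bound of Lemma~\ref{lmm:4.10}, $D\ge\de$, fails: in its proof the minimizers $u_j$ all lie in $\{(0,0)\}\cup\ov\cS_{\Om'}$, so their sum only lands in $\cS_{\Om'^{*n}}$, and one needs $\ga$ to keep distance $\de$ from \emph{that} set. Without $D\ge\de$ the vector field~\eqref{eqdefvecX} is not even defined, and none of \eqref{4.3}--\eqref{4.4} survives.

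The fix — which is the paper's actual proof, stated as the Remark closing Section~4.3 — is not to project at the end but to modify the deformation itself: replace $\eta$ by $\eta_j(v)\defeq\dist\big(v,\{(0,0)\}\cup\ov\cS_{\Om_j}\big)$ in the $j$-th component of the vector field, with $D(t,\vec v\,)\defeq\eta_1(v_1)+\cdots+\eta_n(v_n)+|\ti\ga(t)-(v_1+\cdots+v_n)|$. Then in the proof of Lemma~\ref{lmm:4.10} each minimizer $u_j$ lies in $\{(0,0)\}\cup\ov\cS_{\Om_j}$, so the non-trivial sums $u_1+\cdots+u_n$ land in $\ov\cS_{\Om_1*\cdots*\Om_n}$ — each $\Om_j$ contributing at most once — and $D\ge\de$ follows from exactly the hypothesis $\ga\in\Pi_{\Om_1*\cdots*\Om_n}^{\de,L}$ that Theorem~\ref{thm:4.9} provides. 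Moreover this modified flow freezes the $j$-th component on $\ov\cS_{\Om_j}$, so each deformed path is $\Om_j$-allowed and Lemma~\ref{lmm:4.12} puts it directly in $K_{\Om_j}^{\de'(t),L_j}$; your projection $\fq_j$ then becomes unnecessary. The rest of your argument (the $C^1$ approximation, $\rho=\tfrac43\de$, the Jacobian bound, the $1/n!$ from the simplex) does carry over unchanged once the vector field is modified.
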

%See also Remark \ref{rem:4.10}.

%%%%%%%%%%%%%%%%%%%%%%%%%%%%%%%%%%%%%%%%%%%%%%%%%%%%
%
\subsection{Proof of Theorem~\ref{thm:4.6}}
%
%%%%%%%%%%%%%%%%%%%%%%%%%%%%%%%%%%%%%%%%%%%%%%%%%%%%

We suppose that we are given $n\ge1$, $\rho>0$, a \dfs~$\Om$
such that $\Om=\ti\Om$ and $\Om_{3\rho}=\O$,
and $\ga \in \Pi_{\Om^{*n}}^{\de,L}$ satisfying~\eqref{4.1} with $\de\in(0,\rho)$ and $L>0$.

We set $\tilde{\ga}(t)\defeq \big( L(\ga_{|t}),\ga(t) \big)$ and
define functions
\[
\eta:\R\times\C \to\Rp, \qquad \qquad
%
% and
%
D:[a,1]\times (\R\times\C)^n\to\Rp
\]
by the formulas
\beglab{eq:defetaD}
\eta(v) \defeq \dist\big( v, \{ (0,0) \} \cup \cS_\Om \big),
\quad
D\big(t,\vec{v}\,\big) \defeq
\eta(v_1) + \cdots + \eta(v_n) + 
\big| \tilde{\ga}(t) -(v_1+\cdots+v_n )\big|,
\edla
where $|\,\cdot\,|$ is the Euclidean norm in 
$\R\times\C\simeq \R^3$.
The assumptions $\Om=\ti\Om$ and $\ga\in\Pi_{\Om^{*n}}^{\de,L}$ yield

%%%%%%%%%%%%%%%%%%%%%%%%%%%%%%%%%%%%%%%%%%%%%%%%%%%%%

\begin{lmm}\label{lmm:4.10}
The function $D$ satisfies
\begin{equation}\label{4.9}
D\geq\de
\quad\text{on}
\ens
[a,1]\times(\R\times\C)^n.
\end{equation}
\end{lmm}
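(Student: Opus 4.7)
The plan is to reduce the desired bound $D(t,\vec v)\ge\de$, via the triangle inequality, to a bound of the form $|\ti\ga(t) - w|\ge\de$ for a suitable $w\in\{(0,0)\}\cup\cS_{\Om^{*n}}$, against which the hypothesis $\ga\in\Pi_{\Om^{*n}}^{\de,L}$ applies directly.

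More precisely, I fix $t\in[a,1]$ and $\vec v=(v_1,\ldots,v_n)\in(\R\times\C)^n$, and for each~$j$ choose a point $w_j\in\{(0,0)\}\cup\cS_\Om$ realising $|v_j-w_j|=\eta(v_j)$; such a $w_j$ exists because the assumption $\Om=\ti\Om$ makes $\cS_\Om$ closed in $\R\times\C$ by Lemma~\ref{lemclosOm}, and so does $\{(0,0)\}\cup\cS_\Om$. The triangle inequality then gives
$$
D(t,\vec v) \;=\; \sum_{j=1}^n |v_j-w_j| + \bigl|\ti\ga(t)-\textstyle\sum_j v_j\bigr|
\;\ge\; \bigl|\ti\ga(t)-\textstyle\sum_j w_j\bigr|,
$$
and it remains to bound the right-hand side from below by~$\de$ in two cases.

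If every $w_j=(0,0)$, then $\sum_j w_j=(0,0)$ and $|\ti\ga(t)|\ge L(\ga_{|t})\ge L(\ga_{|a})=\rho>\de$, the last equality because by~\eqref{4.1} the restriction $\ga|_{[0,a]}$ is a segment of length $|\ga(a)|=\rho$, and $\de<\rho$ by hypothesis. Otherwise at least one $w_j$ lies in $\cS_\Om$: writing each $w_j=(\la_j,\om_j)$ with $\la_j\ge0$ (and $\om_j\in\Om_{\la_j}$ whenever $w_j\ne(0,0)$), a straightforward iteration of the definition~\eqref{eq:defsumdfs} of the sum of \dfs\ shows that $\sum_j\om_j\in(\Om^{*n})_{\sum_j\la_j}$, so that $\sum_j w_j\in\cS_{\Om^{*n}}$; the hypothesis $\ga\in\Pi_{\Om^{*n}}^{\de,L}$ then yields $|\ti\ga(t)-\sum_j w_j|\ge\dist(\ti\ga(t),\cS_{\Om^{*n}})\ge\de$.

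The one delicate point is the bookkeeping in the second case: one must keep track of the indices~$j$ for which $w_j=(0,0)$ and verify that the partial sum of the nontrivial $\om_j$'s, padded by zeros, still lies in $\cS_{\Om^{*n}}$ at level $\sum_j\la_j$. This is precisely the role of the terms $\Om_L\cup\Om'_L$ added into~\eqref{eq:defsumdfs}, together with the obvious inclusion $\Om^{*k}\subset\Om^{*n}$ for $k\le n$ that follows from them; once this is formalised, the lemma drops out.
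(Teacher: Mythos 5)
Your proposal is correct and follows essentially the same route as the paper: pick minimisers $u_j\in\{(0,0)\}\cup\ov\cS_\Om$ for each $\eta(v_j)$, apply the triangle inequality to reduce to $|\ti\ga(t)-\sum_j u_j|$, and split into the case $\sum_j u_j=(0,0)$ (handled by $|\ti\ga(t)|\ge L(\ga_{|t})\ge\rho\ge\de$) and the case $\sum_j u_j\in\cS_{\Om^{*n}}$ (handled by $\ga\in\Pi_{\Om^{*n}}^{\de,L}$), using $\Om=\ti\Om$ and Lemma~\ref{lemclosOm} to identify $\ov\cS_\Om$ with $\cS_\Om$ and the definition of the sum of \dfs\ to place the nontrivial sum in $\cS_{\Om^{*n}}$. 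The "delicate bookkeeping" you flag is handled in the paper exactly as you describe, via the $\Om_L\cup\Om'_L$ terms in~\eqref{eq:defsumdfs}.
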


%%%%%%%%%%%%%%%%%%%%%%%%%%%%%%%%%%%%%%%%%%%%%%%%%%%%%

\begin{proof}
Let $(t,\vec{v}\,) \in [a,1]\times(\R\times\C)^n$.
For each $j\in\{1,\ldots,n\}$, pick $u_j\in\{ (0,0) \} \cup \ov\cS_\Om$
so that $\eta(v_j)=|v_j-u_j|$, and let $u\defeq u_1+\cdots+u_n$.
Either all of the $u_j$'s equal~$(0,0)$, in which case $u=(0,0)$ too,
or $u=(\la,\om)$ is a non-trivial sum of at most~$n$ points of the form
$u_j=(\la_j,\om_j)\in\ov\cS_\Om$, in which case we have in fact
$\om_j\in\Om_{\la_j}$ because of Lemma~\ref{lemclosOm} and the
assuption $\ti\Om=\Om$, hence~\eqref{eq:defsumdfs} then yields
$\om\in\Om^{*n}_\la$.
We thus find
\beglab{ineq:Dtvecv} %begin{align*}
D\big(t,\vec{v}\,\big) 
%&
%
= |v_1-u_1| + \cdots + |v_n-u_n| + 
\big| \tilde{\ga}(t) -(v_1+\cdots+v_n )\big|
%\\
%
%&
\geq | \tilde{\ga}(t) - u |
\edla % \end{align*}
with $u\in\{ (0,0) \} \cup\cS_{\Om^{*n}}$.

If $u=(0,0)$, then $D\big(t,\vec{v}\,\big) \ge \big| \ti\ga(t)\big|
\ge L(\ga_{|t}) \ge \rho \ge \de$ because $t\ge a$.

Otherwise, $u\in\cS_{\Om^{*n}}$ and~\eqref{ineq:Dtvecv} shows that
$D\big(t,\vec{v}\,\big) \ge\de$
because $\ga\in\Pi_{\Om^{*n}}^{\de,L}$.
\end{proof}

%%%%%%%%%%%%%%%%%%%%%%%%%%%%%%%%%%%%%%%%%%%%%%%%%%%%%

Since~$D$ never vanishes, we can define a non-autonomous vector field 
\[
(t,\vec v\,) \in [a,1]\times(\R\times\C)^n \mapsto
\vec X(t,\vec v\,) \in T_{\vec v}\big( (\R\times\C)^n \big) 
\simeq (\R\times\C)^n
\]
by the formulas
\beglab{eqdefvecX}
\vec X(t,\vec v\,) =  \left| \begin{aligned}
X_1(t,\vec v\,) &\defeq \frac{\eta(v_1)}{D(t,\vec v\,)} \ti\ga'(t) 
\\
& \qquad \vdots \\[1ex]
X_n(t,\vec v\,) &\defeq \frac{\eta(v_n)}{D(t,\vec v\,)}  \ti\ga'(t)
\end{aligned} \right.
\edla
Note that $\ti\ga'(t) = \big( \abs{\ga'(t)}, \ga'(t) \big)$.

The functions $X_j \col [a,1]\times (\R\times\C)^n \to \R\times\C$ are
locally Lipschitz, thus we can apply the Cauchy-Lipschitz theorem on
the existence and uniqueness of solutions to differential equations
and get a locally Lipschitz flow map
\beglab{eq:defflowmap}
(t^*,t,\vec v\,) \in [a,1]\times[a,1]\times (\R\times\C)^n 
\mapsto \Phi^{t^*,t}(\vec v\,) \in (\R\times\C)^n
\edla
(value at time~$t$ of the unique maximal solution to $\dd\vec v/dt =
\vec X(t,\vec v\,)$ whose value at time~$t^*$ is~$\vec v$\,).
We construct a $\ga$-adapted deformation of the identity out of the
flow map as follows:

%%%%%%%%%%%%%%%%%%%%%%%%%%%%%%%%%%%%%%%%%%%%%%%%%%%%%

\begin{prp}   \label{prp:constructPsit}
Let $\vuze = \big( \gL(\ze_1),\ldots, \gL(\ze_n) \big) \in \uV$,
\ie $\ze_j = s_j \ga(a)$ with $(s_1,\ldots,s_n) \in \De_n$.
We define $\vec v \defeq 
\big( (\abs{\ze_1},\ze_1), \ldots, (\abs{\ze_n},\ze_n) \big) \in (\R\times\C)^n$
and $\Ga =
(\ti\ga_{1}, \ldots,\ti\ga_{n}) 
\col [0,1] \to (\R\times\C)^n$ by
\[
t \in [0,a] \ens\Rightarrow\ens 
\Ga(t) \defeq \big(\tfrac{t}{a} (\abs{\ze_1},\ze_1), \ldots, \tfrac{t}{a} (\abs{\ze_n},\ze_n) \big),
\qquad
t \in [a,1] \ens\Rightarrow\ens 
\Ga(t) \defeq \Phi^{a,t}(\vec v\,).
\]
Then, for each $j\in\{1,\ldots,n\}$, $\ti\ga_{j}$ is a path $[0,1]\to\R\times\C$ whose
$\C$-projection $\ga_{j}$ belongs to~$\Pi_\Om$, and the formula
\beglab{eq:defPsit}
\Psi_t\big(\vuze\,\big) \defeq \big(
\uga_{1}(t), \ldots, \uga_{n}(t)
\big) \in X_\Om^n
\quad \text{for $t\in[a,1]$.}
\edla
defines a $\ga$-adapted deformation of the identity.
\end{prp}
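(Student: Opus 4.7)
\medskip

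\noindent\textbf{Proof proposal for Proposition~\ref{prp:constructPsit}.}

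My plan is to treat the three requirements in the definition of a $\ga$-adapted deformation of the identity one after the other, reducing everything to standard facts about the flow of the Lipschitz vector field~$\vec X$. Since $\eta(v_j)/D(t,\vec v\,)\le1$ by the very definition of~$D$, one has $|X_j(t,\vec v\,)|\le|\ti\ga'(t)|$, so~$\vec X$ is uniformly bounded on $[a,1]\times(\R\times\C)^n$, and Lemma~\ref{lmm:4.10} guarantees $D\ge\de$, which together with the Lipschitz property of~$\eta$ gives a uniform Lipschitz constant for~$\vec X$ on bounded regions. The Cauchy–Lipschitz theorem then yields a globally defined, locally Lipschitz flow $\Phi^{t^*,t}$ on $[a,1]$, so that~$\Ga$, and hence each $\ti\ga_j = (\la_j,\ga_j)$, is a well-defined Lipschitz path $[0,1]\to\R\times\C$; the matching at $t=a$ is automatic since $\Phi^{a,a}=\ID$.

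Next I would verify that each $\C$-projection $\ga_j$ lies in~$\Pi_\Om$ using the characterization~\eqref{eqtextcharacOmalltiga}. The first component of $X_j$ is $\frac{\eta(v_j)}{D}\abs{\ga'(t)}$ and its $\C$-component is $\frac{\eta(v_j)}{D}\ga'(t)$, so $\la_j'(t)=\abs{\ga_j'(t)}$ a.e. on $[a,1]$; on $[0,a]$ this is obvious from the linear formula. It thus remains to show $\ti\ga_j(t)\in\cM_\Om$ for all~$t$, i.e.\ $\eta(\ti\ga_j(t))>0$. On $[0,a]$ this follows from $\abs{\ti\ga_j(t)}\le\rho<3\rho$ and $\Om_{3\rho}=\O$ (which, by the filtration property, forces $\cS_\Om$ and hence~$\ov\cS_\Om$ to be disjoint from $\{\la<3\rho\}$). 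On $[a,1]$, I would use a Gronwall argument: since~$\eta$ is $1$-Lipschitz, $f_j(t)\defeq\eta(\ti\ga_j(t))$ satisfies $\abs{f_j'(t)}\le\abs{\ti\ga_j'(t)}\le\frac{f_j(t)}{\de}\abs{\ti\ga'(t)}$, which yields the explicit lower bound $f_j(t)\ge f_j(a)\exp\!\big(-\de^{-1}L(\ga_{|[a,t]})\cdot\sqrt2\big)$. Two cases then occur: either $f_j(a)>0$, in which case $f_j(t)>0$ throughout; or $f_j(a)=0$, which by the first step forces $v_j=(0,0)$, i.e.\ $s_j=0$, and the uniqueness of solutions to the ODE (applied to the component $j$, of which $v_j\equiv(0,0)$ is a solution) then gives $\ti\ga_j\equiv(0,0)$, a trivially allowed path.

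For the invariance properties~\eqref{eq:Psitinclus}, I would invoke uniqueness of solutions twice. For $\cN_j$: if $\uze_j=\uO_\Om$ then $s_j=0$, so $v_j(a)=(0,0)$ is a fixed point of the $j$-th component of~$\vec X$, and uniqueness gives $\uga_j(t)=\uO_\Om$ for all~$t$. For $\cN(\ga(a))$: if $\sum_j\ze_j=\ga(a)$, which forces $\sum_j s_j=1$, then $\sum_j v_j(a)=(\sum_j s_j\rho,\sum_j s_j\ga(a))=(\rho,\ga(a))=\ti\ga(a)$. On the affine subspace $\{\sum_j v_j=\ti\ga(t)\}$ one has $D(t,\vec v\,)=\sum_j\eta(v_j)$, so $\sum_j X_j(t,\vec v\,)=\ti\ga'(t)$; hence this subspace is flow-invariant, and uniqueness of the flow gives $\sum_j v_j(t)=\ti\ga(t)$ for all~$t\in[a,1]$. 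Projecting to~$\C$ and passing to lifts then yields $\Psi_t(\vuze\,)\in\cN(\ga(t))$. The local Lipschitz dependence of $\Psi_t(\vuze\,)$ on $(t,\vuze\,)$ is a standard consequence of Cauchy–Lipschitz.

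I expect the main obstacle to be the verification that $\eta(\ti\ga_j(t))>0$ on $[a,1]$, since this is what actually forces the lifted path to avoid the singular set~$\ov\cS_\Om$; in particular, the Gronwall estimate above is not just a convenience but it will also supply the explicit exponential bounds $\de'(t)$ and $c(t)$ appearing in Theorem~\ref{thm:4.6}, so the estimate must be carried out with the correct constants that are compatible with~\eqref{4.5}. The other delicate point is that uniqueness of the ODE is applied repeatedly despite~$\eta$ being only Lipschitz, but the bound $D\ge\de$ ensures that the denominator stays away from zero, so $\vec X$ is genuinely Lipschitz on the relevant region and uniqueness is legitimate.
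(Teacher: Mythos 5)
Your proposal is correct and follows the same overall architecture as the paper's proof (flow of the Lipschitz field~$\vec X$, invariance of $\{v_j=(0,0)\}$ for the $\cN_j$-condition, a differential-inequality argument for the $\cN(\ga(t))$-condition), but you handle the key step --- showing $\ti\ga_j(t)\in\cM_\Om$ on $[a,1]$ --- by a genuinely different route. The paper argues softly: for $v_j^*\in\ov\cS_\Om$ the submanifold $\{v_j=v_j^*\}$ is flow-invariant (since $\eta(v_j^*)=0$ kills~$X_j$ there), so $\Phi^{t,a}$ maps the complement of $\cM_\Om^n$ into itself, and because $\Phi^{a,t}$ and $\Phi^{t,a}$ are mutually inverse bijections one gets $\Phi^{a,t}(\cM_\Om^n)\subset\cM_\Om^n$; no case distinction and no estimate is needed. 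You instead prove the quantitative lower bound $\eta(v_j(t))\ge\eta(v_j(a))\,\ee^{-\sqrt2\,\de\ii L(\restr{\ga}{[a,t]})}$ and treat the degenerate case $\eta(v_j(a))=0$ (equivalently $s_j=0$) separately via invariance of $\{v_j=(0,0)\}$. Your route costs a case distinction but is not wasted work: it is essentially the content of the paper's Lemma~\ref{lmm:4.12}, which is needed anyway to produce the radius $\de'(t)$ in Theorem~\ref{thm:4.6}, so you are anticipating a later step rather than duplicating effort. One small point of rigor: for the $\cN(\ga(t))$-invariance, ``uniqueness of the flow'' applied to the moving affine subspace $\{\sum_j v_j=\ti\ga(t)\}$ is not a direct application of Cauchy--Lipschitz (the constraint set is time-dependent, so tangency of the field alone does not suffice); the clean justification is the one the paper gives, namely that $v_0=\sum_j v_j$ satisfies $\tfrac{\dd}{\dd t}\big(\ti\ga(t)-v_0(t)\big)=\tfrac{|\ti\ga(t)-v_0(t)|}{D}\,\ti\ga'(t)$, hence $h(t)=|\ti\ga(t)-v_0(t)|$ obeys $|h'|\le\de\ii\sqrt2\,|\ti\ga'|\,h$ and $h(a)=0$ forces $h\equiv0$ --- exactly the Gronwall scheme you already deploy for $\eta(v_j(t))$, so this is a phrasing issue rather than a gap.
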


%%%%%%%%%%%%%%%%%%%%%%%%%%%%%%%%%%%%%%%%%%%%%%%%%%%%%

\begin{proof}
We first prove that 
$\ga_{1}, \ldots, \ga_{n} \in \Pi_\Om$.
In view of~\eqref{eqtextcharacOmalltiga}, we just need to check that, for each
$j\in\{1,\ldots,n\}$,
the path $\ti\ga_{j} = (\la_{j},\ga_{j})$ satisfies
\beglab{eq:justneed}
t\in[0,1] \quad\Rightarrow\quad
\ti\ga_{j}(t) \in \cM_\Om
\ens\text{and}\ens
{\dd \la_{j}}/{\dd t} = 
\abs{{\dd \ga_{j}}/{\dd t}}.
\edla

Since $\ze_j \in U$ and $\ga_{j}(t) = \tfrac{t}{a}\ze_j$ for
$t\in[0,a]$, the property~\eqref{eq:justneed} holds for $t\in[0,a]$.

For $t\in[a,1]$, the second property in~\eqref{eq:justneed} follows
from the fact that the $\R$-projection of
$X_j(t,\vec v\,) \in \R\times \C$ coincides with the modulus of its
$\C$-projection.

Since $\big(\ti\ga_{1}(t), \ldots,\ti\ga_{n}(t)\big) 
= \Phi^{a,t}\big(\ti\ga_{1}(a), \ldots,\ti\ga_{n}(a)\big)$
and the first property in~\eqref{eq:justneed} holds at $t=a$,
the first property in~\eqref{eq:justneed} for $t\in[a,1]$ is a consequence of the inclusion
\beglab{inclusPhiatcMn}
\Phi^{a,t}\big(\cM_\Om^n\big) \subset \cM_\Om^n,
\edla
which can itself be checked as follows:
suppose $\vec v^* \in (\R\times\C)^n\setminus\cM_\Om^n$, then it has at
least one component~$v^*_j$ in~$\ov\cS_\Om$ and, in view of the form
of the vector field~\eqref{eqdefvecX}, the submanifold 
$\{\, \vec v \in (\R\times\C)^n \mid v_j = v^*_j \,\}$
is invariant by the maps $\Phi^{t_1,t_2}$
(because $\eta(v_j)=0$ implies that $X_j=0$ on this submanifold),
in particular $\Phi^{t,a}\big((\R\times\C)^n\setminus\cM_\Om^n\big)
\subset (\R\times\C)^n\setminus\cM_\Om^n$,
whence~\eqref{inclusPhiatcMn} follows because~$\Phi^{a,t}$
and~$\Phi^{t,a}$ are mutually inverse bijections.

%%%%%%%%%%%%%%%%%%%%%%%%%%%%%%%%%%%%%%%%%%%%%%%%%%%%%

Therefore the paths 
$\ga_{1}, \ldots, \ga_{n}$
are $\Om$-allowed and have lifts in~$X_\Om$ starting at~$\uO_\Om$, which allow us to define
the maps~$\Psi_t$ by~\eqref{eq:defPsit} on~$\uV$.

We now prove that $(\Psi_t)_{t\in[a,1]}$ is a $\ga$-adapted
deformation of the identity.
The map $(t,\vec v\,) \mapsto \Psi_t(\vec v\,)$ is locally Lipschitz
because the flow map~\eqref{eq:defflowmap} is locally Lipschitz,
and $\Psi_a = \ID$ because $\Phi^{a,a}$ is the identity map of
$(\R\times\C)^n$;
hence, we just need to prove~\eqref{eq:Psitinclus}.

We set
\begin{align*}
%
% \label{eq:defcNze}
%
\ti\cN(w) &\defeq \big\{ (v_1,\ldots,v_n) \in (\R\times\C)^n \mid 
v_1 + \cdots + v_n = w \big\}
\ens\text{for $w\in\R\times\C$,}
\\[1ex]
%
% \label{eq:defcNi}
%
\ti\cN_j &\defeq \big\{ (v_1,\ldots,v_n) \in (\R\times\C)^n \mid v_j = (0,0) \big\}
\ens\text{for $1\le j\le n$.}
\end{align*}
Let $j\in\{1,\ldots,n\}$. The second part of~\eqref{eq:Psitinclus}
follows from the inclusion
\[
\Phi^{a,t}\big(\ti\cN_j\big) \subset \ti\cN_j
\ens\text{for $t\in[a,1]$},
\]
which stems from the fact that the $j$th component of the vector
field~\eqref{eqdefvecX} vanishes on~$\ti\cN_j$ (because $\eta\big((0,0)\big)=0$).

Since $\ze_1+\cdots+\ze_n = \ga(a) \;\Rightarrow\; 
\abs{\ze_1}+\cdots+\abs{\ze_n} = \abs{\ga(a)}$
for any $(\ze_1,\ldots,\ze_n) \in \uV$,
the first part of~\eqref{eq:Psitinclus} follows from the inclusion
\[
\Phi^{a,t}\big(\ti\cN\big(\ti\ga(a)\big)\big) \subset \ti\cN\big(\ti\ga(t)\big)
\ens\text{for $t\in[a,1]$},
\]
which can be itself checked as follows:
consider first an arbitrary initial condition $\vec v \in
(\R\times\C)^n$ and the corresponding solution $\vec v(t) \defeq
\Phi^{a,t}(\vec v\,)$, and let $v_0(t) \defeq v_1(t) + \cdots +
v_n(t)$; then~\eqref{eqdefvecX} shows that
\[
\frac{\dd\,}{\dd t} \big(\ti\ga(t)-v_0(t)\big)
=\frac{\big| \ti{\ga}(t) -v_0(t) \big|}{D(t,\vec v(t))} \ti\ga'(t), 
\]
hence the Lipschitz function $h(t) \defeq \big| \ti{\ga}(t) -v_0(t) \big|$
has an almost everywhere defined derivative which satisfies
$\abs{h'(t)} \le \big| \frac{\dd\,}{\dd t} \big(\ti\ga(t)-v_0(t)\big)
\big| 
\le \frac{1}{D(t,\vec v(t))} \abs{\ti\ga'(t)} \, h(t)$, 
which is $\le \de\ii\sqrt2\, \abs{\ti\ga'(t)} \, h(t)$ by~\eqref{4.9},
whence
\[
\big| \ti{\ga}(t) -v_0(t) \big| \le 
\big| \ti{\ga}(a) -v_0(a) \big|
\exp\big( \de\ii\sqrt2\, L( \restr{\ga}{[a,t]} ) \big)
\]
for all~$t$;
now, if $\vec v \in \ti\cN\big(\ti\ga(a)\big)$, 
we find $v_0(a) = \ti\ga(a)$, whence $v_0(t) = \ti\ga(t)$ for all~$t$.
\end{proof}

%%%%%%%%%%%%%%%%%%%%%%%%%%%%%%%%%%%%%%%%%%%%%%%%%%%%%

We now show that the $\ga$-adapted deformation of the identity that we
have constructed in Proposition~\ref{prp:constructPsit} meets the
requirements of Theorem~\ref{thm:4.6}.

In view of~\eqref{eq:defcMdeL}--\eqref{eq:defPideL} and~\eqref{eq:defKOmdeL},
the inclusion~\eqref{4.3} follows from
\begin{lmm}    \label{lmm:4.12}
Let $\ti\uV \defeq \big\{
\big( s_1 \ti\ga(a), \ldots, s_n \ti\ga(a) \big) \mid
(s_1,\ldots,s_n) \in \De_n \big\} \in (\R\times\C)^n$.
Then
\beglab{4.13}
\Phi^{a,t}\big( \ti\uV \big) \subset
\bigcup_{L_1+\cdots+L_n=L(\ga_{|t})}
\cM_\Om^{L_1,\de'(t)}
\times\cdots\times
\cM_\Om^{L_n,\de'(t)}
\edla
for all $t\in[a,1]$, with~$\de'(t)$ as in~\eqref{4.5}.
\end{lmm}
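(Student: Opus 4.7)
The plan is to show, for each starting point $\vec v(a) = (s_1\ti\ga(a),\ldots,s_n\ti\ga(a))\in\ti\uV$ with $(s_1,\ldots,s_n)\in\De_n$, that its image $\vec v(t)=\Phi^{a,t}(\vec v(a))$ under the flow of~\eqref{eqdefvecX} lies in a product of the required form. Writing $v_j(t)=(\la_j(t),\ze_j(t))\in\R\times\C$, one needs two things: a \emph{length bound} $\sum_j\la_j(t)\le L(\ga_{|t})$ (so that appropriate $L_j$'s with $\sum L_j=L(\ga_{|t})$ and $\la_j(t)\le L_j$ can be chosen), and a \emph{distance bound} $\dist(v_j(t),\cS_\Om)\ge\de'(t)$ for every $j$.

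For the length bound, I would observe that the $\R$-component of~$\ti\ga'(t)$ is $|\ga'(t)|$, so from~\eqref{eqdefvecX} one has $\dot\la_j(t)=\frac{\eta(v_j(t))}{D(t,\vec v(t))}|\ga'(t)|\ge0$, whence
\[
\sum_{j=1}^n\dot\la_j(t)=\frac{\sum_j\eta(v_j(t))}{D(t,\vec v(t))}|\ga'(t)|\le|\ga'(t)|
\]
by the very definition of~$D$ in~\eqref{eq:defetaD}. Integrating from~$a$ to~$t$ and using $\sum_j\la_j(a)=\rho\sum_j s_j\le\rho=L(\ga_{|a})$, I obtain $\sum_j\la_j(t)\le L(\ga_{|t})$. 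Distributing the slack $L(\ga_{|t})-\sum_j\la_j(t)$ among the $L_j$'s (e.g.\ evenly) produces the desired splitting with $\la_j(t)\le L_j$.

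The main step is the distance bound. If $s_j=0$, then $\eta(v_j(a))=0$ forces $X_j\equiv0$ on the invariant submanifold $\{v_j=(0,0)\}$, so $v_j(t)=(0,0)$ and $\dist(v_j(t),\cS_\Om)\ge 3\rho>\de'(t)$ because $\Om_{3\rho}=\O$ (the $\R$-projection of any point of $\cS_\Om$ exceeds~$3\rho$). If $s_j>0$, I set $r_j(t)\defeq\dist(v_j(t),\cS_\Om)$; the 1-Lipschitz character of a distance function together with~\eqref{eqdefvecX}, the bound $D\ge\de$ from Lemma~\ref{lmm:4.10}, $|\ti\ga'(t)|=\sqrt2\,|\ga'(t)|$, and the obvious inequality $\eta(v)\le\dist(v,\cS_\Om)$ give
\[
|\dot r_j(t)|\le|\dot v_j(t)|=\frac{\eta(v_j(t))}{D(t,\vec v(t))}\sqrt2\,|\ga'(t)|\le\tfrac{\sqrt2}{\de}|\ga'(t)|\,r_j(t)\quad\text{a.e.\ on }[a,1].
\]
Grönwall's inequality yields $r_j(t)\ge r_j(a)\,e^{-\sqrt2\,\de\ii L(\restr{\ga}{[a,t]})}$. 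Since $v_j(a)=(s_j\rho,s_j\ga(a))$ has $\R$-coordinate at most~$\rho$ while $\cS_\Om$ lies in $\{\la>3\rho\}\times\C$, one has $r_j(a)\ge2\rho$, hence
\[
r_j(t)\ge 2\rho\,e^{-\sqrt2\,\de\ii L(\restr{\ga}{[a,t]})}\ge \rho\,e^{-2\sqrt2\,\de\ii L(\restr{\ga}{[a,t]})}=\de'(t),
\]
the middle inequality being equivalent to $e^{\sqrt2\,\de\ii L(\restr{\ga}{[a,t]})}\ge\tfrac12$.

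The potentially delicate point is the second step: one must pick a Lyapunov quantity that both reacts well to the specific vector field $\vec X$ and has an a priori lower bound at time~$t=a$ independent of~$s_j$ (beyond the trivial case $s_j=0$). Using $r_j$ rather than $\eta(v_j)$ is what makes this work, because $r_j(a)\ge2\rho$ uniformly in~$s_j$, whereas $\eta(v_j(a))=s_j\rho\sqrt2$ degenerates as $s_j\to0$; the comparison $\eta\le r_j$ is precisely what lets us still control $|\dot v_j|$ by $r_j$ itself. Combining the two bounds, the union in~\eqref{4.13} is then witnessed by the choice of $L_j$'s above, completing the proof.
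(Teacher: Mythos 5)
Your proof is correct, and while the first half (the length bound $\sum_j \la_j(t)\le L(\ga_{|t})$) coincides with the paper's argument, your treatment of the distance bound takes a genuinely different and in fact cleaner route. The paper applies the two-sided Gronwall estimate to $h_j(t)=\eta\big(v_j(t)\big)$, where $\eta$ is the distance to $\{(0,0)\}\cup\cS_\Om$; since $\eta\big(v_j(a)\big)=s_j\rho\sqrt2$ can be arbitrarily small, it must then split into two cases: when $\eta\big(v_j(a)\big)\ge\rho\,\ee^{-\sqrt2\,\de\ii L(\restr{\ga}{[a,t]})}$ the lower Gronwall bound gives the conclusion directly, and otherwise the upper Gronwall bound forces $\eta\big(v_j(t')\big)<\rho$ throughout, which via a connectedness/boundary argument traps the trajectory in the ball $\abs{v}<3\rho/2$, far from $\cS_\Om$. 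You instead run Gronwall on $r_j(t)=\dist\big(v_j(t),\cS_\Om\big)$ itself; the differential inequality still closes because $\eta\le\dist(\cdot,\cS_\Om)$ controls $\abs{\dot v_j}$ by $r_j$, and the initial datum $r_j(a)\ge 2\rho$ is uniform in $s_j$ (since $\cS_\Om$ lives in $\{\la>3\rho\}$ while $v_j(a)$ has first coordinate $\le\rho$). This eliminates the case distinction and the topological trapping argument, and even yields the slightly stronger bound $2\rho\,\ee^{-\sqrt2\,\de\ii L(\restr{\ga}{[a,t]})}\ge\de'(t)$. All the supporting details check out: $\abs{\ti\ga'}=\sqrt2\,\abs{\ga'}$, $D\ge\de$ from Lemma~\ref{lmm:4.10}, the $1$-Lipschitz composition giving $\abs{\dot r_j}\le\abs{\dot v_j}$ a.e., and the distribution of the slack among the $L_j$'s.
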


%%%%%%%%%%%%%%%%%%%%%%%%%%%%%%%%%%%%%%%%%%%%%%%%%%%%%

\begin{proof}[Proof of Lemma~\ref{lmm:4.12}]
Let us consider an initial condition $\vec v \in \ti\uV$ and the corresponding solution $\vec v(t) \defeq
\Phi^{a,t}(\vec v\,)$,
whose components we write as $v_j(t) = \big( \la_j(t), \ze_j(t) \big)$ 
for $j=1,\ldots,n$.
We also have $v_j(a) = s_j \ti\ga(a)$ for some $(s_1,\ldots,s_n) \in
\De_n$,
whence $\la_1(a) + \cdots + \la_n(a) \le \abs{\ga(a)} = \rho$
and $\abs{v_j(a)} = \rho$ for $j=1,\ldots,n$.

We first notice that 
\[
\sum_{j=1}^n \abs{\la_j'(t)} = 
\sum_{j=1}^n \frac{\eta\big(v_j(t)\big)}{D(t,\vec v(t))} \abs{\ga'(t)}
\le \abs{\ga'(t)},
\]
hence $\la_1(t) + \cdots + \la_n(t) \le 
\la_1(a) + \cdots + \la_n(a) + \int_a^t \abs{\ga'}
\le L(\ga_{|t})$.
Therefore, we just need to show that
\beglab{ineq:needdistSOm}
\dist\big( v_j(t), \cS_\Om \big) \ge \de'(t)
\quad \text{for $j=1,\ldots,n$.}
\edla

Let $j\in\{1,\ldots,n\}$.
   Since~$\eta$ is $1$-Lipschitz, we can define a Lipschitz function
   on $[a,1]$ by the formula
  $h_j(t)\defeq \eta\big(v_j(t)\big)$,
  and its almost everywhere defined derivative satisfies 
\[
\abs{h_j'(t)} \le \abs{v_j'(t)} = \frac{h_j(t)}{D(t,\vec v(t))} \abs{\ti\ga'(t)}
\le g(t) h_j(t),
\quad\text{where}\; g(t) \defeq \de\ii\sqrt2\,\abs{\ga'(t)}.
\]
Since $\int_a^t g(\tau)\,\dd\tau = \de\ii\sqrt2\,L( \restr{\ga}{[a,t]} )$,
we deduce that
\beglab{ineq:etavjt}
\eta\big(v_j(a)\big) \,\ee^{-\de\ii\sqrt2\,L( \restr{\ga}{[a,t]} )}
\le \eta\big(v_j(t)\big) \le
\eta\big(v_j(a)\big) \,\ee^{\de\ii\sqrt2\,L( \restr{\ga}{[a,t]} )}
\quad \text{for all $t\in[a,1]$.}
\edla
Let us now fix $t\in[a,1]$.
We conclude by distinguishing two cases.

Suppose first that 
$\eta(v_j(a)) \ge \rho\, \ee^{-\sqrt{2}\,\de^{-1} L( \restr{\ga}{[a,t]} )}$.
Then the first inequality in~\eqref{ineq:etavjt} yields
$\eta(v_j(t)) \ge \de'(t)$, and since 
$\dist\big( v_j(t), \cS_\Om \big) \ge \eta(v_j(t))$
we get~\eqref{ineq:needdistSOm}.

Suppose now that
$\eta(v_j(a)) < \rho\, \ee^{-\sqrt{2}\,\de^{-1} L( \restr{\ga}{[a,t]} )}$.
Then the second inequality in~\eqref{ineq:etavjt} yields
$\eta(v_j(t')) < \rho$ for all $t'\in[a,t]$.
This implies that $v_j\big([a,t]\big) \subset
B \defeq \{\, v \in \R\times \C \mid \abs{v} < 3\rho/2 \,\}$;
indeed, if not, since $v_j(a)\in B$, there would exist $t' \in (a,t]$ such that
$v_j(t') \in \pa B$,
but using $\Om_{3\rho}=\O$ it is easy to check that 
\[ v\in\ov B \ens\Rightarrow\ens \dist(v,\cS_\Om)\ge 3\rho/2, \]
hence we would have $\dist(v_j(t'),\cS_\Om)\ge
3\rho/2>\eta(v_j(t'))$,
whence $\eta(v_j(t')) = \dist\big( v_j(t'), (0,0) \big) = 3\rho/2$,
which is a contradiction.
Therefore $v_j(t) \in B$, whence $\dist(v_j(t),\cS_\Om)\ge 3\rho/2 > \de'(t)$
and we are done.
\end{proof}

%%%%%%%%%%%%%%%%%%%%%%%%%%%%%%%%%%%%%%%%%%%%%%%%%%%%%

Only the inequality~\eqref{4.4} remains to be proved.
We first show the following:
\begin{lmm}   \label{lmm:auxiLipX}
For any $t\in[a,1]$ and 
$
\vec{u},\vec{v}\in(\R\times\C)^n,
$
the vector field~\eqref{eqdefvecX} satisfies
\begin{equation}\label{4.17}
\sum_{j=1}^{n}
|X_j(t,\vec{u}\,)-X_j(t,\vec{v}\,)|
\leq
3\frac{|\ti\ga'(t)|}{D(t,\vec{u}\,)}
\sum_{j=1}^{n}
|u_j-v_j|.
\end{equation}
\end{lmm}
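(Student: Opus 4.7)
The plan is to derive the estimate by a telescoping decomposition of the difference $X_j(t,\vec u\,) - X_j(t,\vec v\,)$, separating the variation coming from~$\eta$ from the variation coming from the denominator~$D$. First I would write
\[
X_j(t,\vec u\,) - X_j(t,\vec v\,) = \frac{\eta(u_j)-\eta(v_j)}{D(t,\vec u\,)}\, \ti\ga'(t) + \eta(v_j)\, \ti\ga'(t) \Bigl( \frac{1}{D(t,\vec u\,)} - \frac{1}{D(t,\vec v\,)} \Bigr),
\]
the split being deliberately asymmetric so that $D(t,\vec u\,)$ — the denominator appearing in the \rhs\ of~\eqref{4.17} — shows up in the first term.

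The first piece is handled by the $1$-Lipschitz character of~$\eta$, which is immediate from its definition as a distance function in~\eqref{eq:defetaD}: $|\eta(u_j)-\eta(v_j)|\le|u_j-v_j|$, so summing over~$j$ produces $\frac{|\ti\ga'(t)|}{D(t,\vec u\,)}\sum_j|u_j-v_j|$.

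For the second piece the key observation is that $\sum_j \eta(v_j)\le D(t,\vec v\,)$, which is immediate from the definition~\eqref{eq:defetaD} of~$D$ (the extra summand $|\ti\ga(t)-\sum_j v_j|$ is nonnegative). Factoring $\frac{|\ti\ga'(t)|\,|D(t,\vec u\,)-D(t,\vec v\,)|}{D(t,\vec u\,)D(t,\vec v\,)}$ out of the sum over~$j$ and using this inequality cancels one factor of~$D(t,\vec v\,)$ and leaves $\frac{|\ti\ga'(t)|}{D(t,\vec u\,)}|D(t,\vec u\,)-D(t,\vec v\,)|$. I would then estimate $|D(t,\vec u\,)-D(t,\vec v\,)|\le 2\sum_j|u_j-v_j|$ by two triangle inequalities: the Lipschitz property of~$\eta$ bounds $|\sum_j(\eta(u_j)-\eta(v_j))|$ by $\sum_j|u_j-v_j|$, and the reverse triangle inequality applied to $\bigl||\ti\ga(t)-\sum u_j|-|\ti\ga(t)-\sum v_j|\bigr|$ adds another $\sum_j|u_j-v_j|$.

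Adding the $1$ from the first piece to the $2$ from the second yields the factor~$3$ in~\eqref{4.17}. There is no real obstacle; the only point requiring any care is the asymmetric splitting above, which is dictated by the asymmetric appearance of $D(t,\vec u\,)$ (rather than $D(t,\vec v\,)$) on the right-hand side of the claim.
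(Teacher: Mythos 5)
Your proposal is correct and follows essentially the same route as the paper: the asymmetric splitting you write is algebraically identical to the paper's identity $X_j(t,\vec{u}\,)-X_j(t,\vec{v}\,)=\big(\eta(u_j)-\eta(v_j)+(D(t,\vec{v}\,)-D(t,\vec{u}\,))\frac{\eta(v_j)}{D(t,\vec{v}\,)}\big)\frac{\ti\ga'(t)}{D(t,\vec{u}\,)}$, and you then use the same three ingredients (the $1$-Lipschitz property of~$\eta$, the bound $|D(t,\vec{u}\,)-D(t,\vec{v}\,)|\le 2\sum_j|u_j-v_j|$, and $\sum_j\eta(v_j)\le D(t,\vec{v}\,)$) to obtain the factor~$3$.
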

\begin{proof}[Proof of Lemma~\ref{lmm:auxiLipX}]
We rewrite $X_j(t,\vec{u}\,)-X_j(t,\vec{v}\,)$ as follows:
$$
X_j(t,\vec{u}\,)-X_j(t,\vec{v}\,)
=
\Big(
\eta(u_j)-\eta(v_j)
+\big(D(t,\vec{v}\,)-D(t,\vec{u}\,)\big)
\frac{\eta(v_j)}{D(t,\vec{v}\,)}
\Big)
\frac{\ti\ga'(t)}{D(t,\vec{u}\,)}.
$$
Since
$
|\eta(u_j)-\eta(v_j)|
\leq
|u_j-v_j|
$
holds for $j=1,\ldots,n$,
we have
\begin{align*}
\big|
D(t,\vec{u}\,)-D(t,\vec{v}\,)
\big|
&\leq
\sum_{j=1}^{n}\big|\eta(u_j)-\eta(v_j)\big|
+\bigg|
\Big|\ti\ga(t)
-\sum_{j=1}^{n}u_j\Big|
-\Big|\ti\ga(t)
-\sum_{j=1}^{n}v_j\Big|
\bigg|
\\
&\leq
2\sum_{j=1}^{n}
|u_j-v_j|.
\end{align*}
Then,
summing up 
$
|X_j(t,\vec{u}\,)-X_j(t,\vec{v}\,)|
$
in $j$,
we obtain \eqref{4.17}
from the inequality 
$
\sum_{j=1}^{n}
\eta(v_j)
\leq
D(t,\vec{v}\,).
$
\end{proof}

%%%%%%%%%%%%%%%%%%%%%%%%%%%%%%%%%%%%%%%%%%%%%%%%%%%%%

We conclude by deriving the inequality~\eqref{4.4} from Lemma~\ref{lmm:auxiLipX}.
We use the notation~\eqref{eq:notunzeitzeit} to define 
$\ze_1^t,\ldots,\ze_n^t \col \De_n \to \C$,
and we now define $v_j^t \col \De_n\to\R\times\C$ for $t\in[a,1]$ by
the formulas $v_j^a(\vec s\,)
\defeq s_j \ti\ga(a)$ and
\[
\vec v\,^t \defeq (v_1^t,\ldots,v_n^t) 
\defeq \Phi^{a,t} \circ (v_1^a,\ldots,v_n^a).
\]
Let
\[
V(t)
\defeq 
\sum_{j=1}^{n}
|\ze_j^t(\vec{s}\,)-\ze_j^t(\vec{s}\,')|
\quad\text{for $\vec{s},\vec{s}\,'\in\De_n$.}
\]
We obtain from 
\eqref{4.9} and
\eqref{4.17} 
the following estimate:
\begin{align*}
V(t)
&\leq
V(a)+
\frac{1}{\sqrt{2}}
\sum_{j=1}^{n}
\int_a^t
\big|X_j\big(\tau,\vec v\,^{\tau}(\vec{s}\,)\big)-
X_j\big(\tau,\vec v\,^{\tau}(\vec s\,')\big)\big| \,\dd \tau
\\
&\leq
V(a)+
\frac{3}{\de}\int_a^t |\ga'(\tau)|V(\tau)d\tau.
\end{align*}
Therefore,
Gronwall's lemma yields
$
V(t)
\leq
V(a)\,\ee^{3\de\ii L( \restr{\ga}{[a,t]} )},
$
and hence,
since 
$
V(a)=\rho
\sum_{j=1}^{n}|s_j-s_j'|,
$
we have
\begin{equation}\label{4.18}
V(t)
\leq
\rho\,\ee^{3\de\ii L( \restr{\ga}{[a,t]} )}
\sum_{j=1}^{n}|s_j-s_j'|.
\end{equation}
Then,
\eqref{4.18} entails
via Rademacher's theorem
that
the following estimate holds a.e.\ on $\De_n$:
$$
\sum_{i=1}^{n}
\bigg|
\frac{\partial \zeta^t_i}{\partial s_j}
\bigg|
\leq
\rho\,\ee^{3\de\ii L( \restr{\ga}{[a,t]} )}.
$$
Finally,~\eqref{4.4} follows from the inequality
$$
\bigg|\Det\Big[\, \frac{\pa\ze_i^t}{\pa s_j}\,\Big]_{1\le i,j\le n}\bigg|
\leq
\prod_{j=1}^{n}
\bigg(
\sum_{i=1}^{n}
\bigg|
\frac{\partial \zeta^t_i}{\partial s_j}
\bigg|
\bigg).
$$

\begin{rem}
Theorem \ref{thm:4.9} is verified by replacing the vector field~\eqref{eqdefvecX} by
$$
\vec X(t,\vec v\,) =  \left| \begin{aligned}
X_1 &\defeq \frac{\eta_1(v_1)}{D(t,\vec v\,)}\ti\ga'(t)
\\
& \qquad \vdots \\[1ex]
X_n &\defeq \frac{\eta_n(v_n)}{D(t,\vec v\,)}\ti\ga'(t),
\end{aligned} \right.
$$
where
$ % \[
\eta_j(v) \defeq \dist\big( v, \{ (0,0) \} \cup \ov\cS_{\Om_j} \big) %,
$, %\]
$ % \[
D\big(t,\vec{v}\,\big) \defeq
\eta_1(v_1) + \cdots + \eta_n(v_n) + 
|\ti\ga(t)-(v_1+\cdots+v_n )|.
$ % \]
\end{rem}

% %%%%%%%%%%%%%%%%%%%%%%%%%%%%%%%%%%%%%%%%%%%%%%%%%%%%
% %
%\bcb
\subsection{The case of endless continuability \bdv}\label{sec:multifilt}
% %
% %%%%%%%%%%%%%%%%%%%%%%%%%%%%%%%%%%%%%%%%%%%%%%%%%%%%

In this subsection,
we extend the estimates of
Theorem~\ref{thm:4.7} to the case of a \ddfs.

\begin{nota}
Given $\de,M,L>0$,
we denote by $\Pi_\Om^{\de,M,L}$
the set of all paths $\ga\in\Pi_\Om\dv$
such that % satisfying the conditions
$V(\ga)\leq M$, $L(\ga)\leq L$ and 
$
\inf\limits_{t\in[0,t_*]}
\dist_1\big(
\ti\ga\dv(t),\ov\cS_{\Om}\big)\geq\de
$,
where $\ti\ga\dv$ is as in~\eqref{eqdeftigadv} and
$\dist_1$ is %(\cdot,\cdot)$ is 
the distance associated with the norm $\|\cdot\|_1$
defined on $\R^2\times\C$
% $(\mu,\la,\ze)\in\R^2\times\C$
by $\|(\mu,\la,\ze)\|_1\defeq |\mu|+\sqrt{|\la|^2+|\ze|^2}$.
\end{nota}

Let us fix an arbitrary \ddfs~$\Om$.
We fix $\rho>0$ such that $\Om_{3\rho,M}=\O$
for every $M\geq0$.
We consider a path $\ga:[0,1]\to\C$ in $\Pi_{\Om^{*n}}^{\de,M,L}$,
with arbitrary $\de\in(0,\rho)$ and $L>0$,
satisfying the following condition:
\begin{eq-text}
There exists $a\in(0,1)$ such that
$\ga(t)=\frac{t}{a}\ga(a)$ for $t\in[0,a]$ and $|\ga(a)|=\rho$.
\end{eq-text}
Then, for $t\in[0,1]$ and $v \in \R\times\C$, we set
\[
\eta(t,v) \defeq 
\dist_1\big( (V(\ga_{|t}),v), \big(\R\times \{ (0,0) \} \big) \cup \ov\cS_{\Om} \big).
\]
and, for $\vec{v}=(v_1,\cdots,v_n) \in (\R\times\C)^n$,
\[
D\big(t,\vec{v}\,\big) \defeq
\eta(t,v_1) + \cdots + \eta(t,v_n) + 
%
%\max_{j=1,\cdots,n}
%\big|V(\ga_{|t})-\mu_j\big|+
|\ti\ga(t)-( v_1+\cdots+  v_n )|.
\]
%
%where $+_1$ is the sum defined by
%$$
%v_1+_1v_2\defeq\big(\max_{j=1,2}\{\mu_j\},\ti v_1+\ti v_2\big)
%\quad\text{for}\quad
%v_j=(\mu_j,\ti v_j)\in\R\times\big(\R\times\C\big).
%$$
%
Choosing
$(\mu_j,u_j)\in \big( \R\times \{ (0,0) \} \big)\cup\ov\cS_{\Om}$
so that
$\eta(t,v_j)=\|(V(\ga_{|t}),v_j)-(\mu_j,u_j)\|_1$
for each~$j$
and using
$
(\mu_{j_0}, u_1+\cdots+  u_n)
\in\big( \R\times \{ (0,0) \}  \big)\cup\ov\cS_{\Om^{*n}}
$
with
$\dst\max_{j=1,\cdots,n}\mu_j=\mu_{j_0}$,
we see that
\begin{align*}
D(t,\vec{v}\,)
&=
\sum_{j=1}^n
(|V(\ga_{|t})
-\mu_j|+| v_j- u_j|)
%+\max_{j=1,\cdots,n}
%\big|V(\ga_{|t})-\mu_j\big|+
+|\ti\ga(t)-( v_1+\cdots+  v_n )|
\\
&\geq
\big|V(\ga_{|t})-
\mu_{j_0}\big|+
|\ti\ga(t)-(  u_1+\cdots+  u_n )|
\\
&\geq\min\{\de,\rho\}
\end{align*}
for $(t,\vec v\,) \in [a,1]\times (\R\times\C)^n $.

We can thus define a map
$
(t^*,t,\vec v\,) \in [a,1]\times[a,1]\times (\R\times\C)^n 
\mapsto \Phi^{t^*,t}(\vec v\,) \in (\R\times\C)^n
$
as the flow map of
\begin{equation}\label{4.32}
\vec X(t,\vec v\,) =  \left| \begin{aligned}
X_1(t,\vec v\,) &\defeq 
%\frac{\eta(v_1)}{D(t,\vec v\,)}L'(\ga_{|t}),
\frac{\eta(t,v_1)}{D(t,\vec v\,)}\ti\ga'(t)
\\
& \qquad \vdots \\[1ex]
X_n(t,\vec v\,) &\defeq
%\frac{\eta(v_n)}{D(t,\vec v\,)}L'(\ga_{|t}),
\frac{\eta(t,v_n)}{D(t,\vec v\,)}\ti\ga'(t)
\end{aligned} \right.
\end{equation}
Let $\vec{v}\,^t=(\vec{v}\,^t_1,\cdots,\vec{v}\,^t_n)$ be the flow of \eqref{4.32}
with the initial condition 
$
\vec{v}\,^a_j\defeq(|\ga(a)|s_j,\ga(a)s_j)
$
with $\vec{s}\in\De_n$.
Since $\ti\ga'(t)$,
$\eta(t,v_j)$ and $D(t,\vec v\,)$ are Lipschitz continuous on
$[a,1]\times (\R\times\C)^n $,
we find by Rademacher's theorem that
$d\ze_j^t/dt$ is differentiable a.e.\ on $[a,1]$
and satisfies
\begin{equation*}
\frac{d^2\ze_j^t/dt^2}{d\ze_j^t/dt}
=\frac{1}{\eta(v_j^t)}\frac{d\eta(v_j^t)}{dt}
-\frac{1}{D(t,\vec{v}\,^t)}\frac{dD(t,\vec{v}\,^t)}{dt}
+\frac{\ga''(t)}{\ga'(t)}
\end{equation*}
when $s_j\neq0$.
Since $\eta(v_j^t)$ and $D(t,\vec{v}\,^t)$ are real valued functions,
we have
$$
{\rm Im}\frac{d^2\ze_j^t/dt^2}{d\ze_j^t/dt}
={\rm Im}\frac{\ga''(t)}{\ga'(t)}.
$$
Therefore, the following holds
for every $t\in [a,1]$:
\begin{equation*}
V\big(\ze_j^\cdot|_{[0,t]}\big)
=V(\ga_{|t})
\quad
\text{when}
\quad
s_j\neq0.
\end{equation*}
Arguing as for Theorem \ref{thm:4.7}, we obtain
%%%%%%%%%%%%%%%%%%%%%%%%%%%%%%%%%%%%%%%%%%%%%%%%%
\begin{thm}
Let $\de,L,M>0$ be reals.
Then there exist $c,\de'>0$ such that,
for every \ddfs~$\Om$ such that $\Om_{4\de,M}=\O$ $(M\geq0)$,
for every integer $n\ge1$ and for
every
$\hat f_1, \ldots, \hat f_n \in \hat\gR_\Om\dv$,
the function $1*\hat f_{1}*\cdots *\hat f_{n}$ belongs to $\hat\gR_{\Om^{*n}}\dv$
and satisfies
\beglab{4.33}
 \big\|\,
\fp_{\Om^{*n}}^* \big( 1*\hat f_{1}*\cdots *\hat f_{n}\big )
\big\|_{\Om^n}^{\de,M,L}
\leq
\frac{c ^n}{n!}
%
%\ \sup_{\substack{ L\pp1,\ldots,L\pp n>0 \\ L\pp 1+\cdots+L\pp n=L}}
\ \sup_{L_1+\cdots+L_n=L}
 \big\|\, \fp_\Om^{*} \hat f_1 \big\|_{\Om}^{\de',M,L_1}
\, \cdots
\big\|\, \fp_\Om^{*} \hat f_n \big\|_{\Om}^{\de',M,L_n},
\edla
where the seminorm 
$\|\cdot\|_{\Om^n}^{\de,M,L}$
on $\hat\gR_\Om\dv$ is defined by the supremum on the set
$
\big\{
\un\ga(1)\mid
\ga\in\Pi_\Om^{\de,M,L}
\big\}.
$
\end{thm}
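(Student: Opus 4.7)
The plan is to mimic the proof of Theorem~\ref{thm:4.7}, using the modified vector field~\eqref{4.32} in place of~\eqref{eqdefvecX}. As in Section~4.1, fix $\rho>0$ with $\Om_{3\rho,M}=\O$, reduce to the case of a path $\ga\in\Pi_{\Om^{*n}}^{\de,M,L}$ satisfying an analogue of~\eqref{4.1}, and define $\uV$ as the image under $\gD(\ga(a))$ of $\De_n$. Given the flow $\Phi^{a,t}$ of~\eqref{4.32}, define the deformation of the identity by lifting each component $\ze_j^t$ of $\Phi^{a,t}\circ(v_1^a,\ldots,v_n^a)$ to $X_\Om$ via the same formula~\eqref{eq:defPsit}.

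The first task is to verify that the lifted paths are genuinely $\Om$-allowed \bdv, so that they admit lifts to the \ddfs-endless Riemann surface $X_\Om$ (constructed in Section~3.4). The Lipschitz character of $\ti\ga'$, $\eta$ and $D$, together with the lower bound $D\ge\min\{\de,\rho\}$ already derived in the excerpt, gives a locally Lipschitz flow. The identity
\[
V\big(\ze_j^{\,\cdot}|_{[0,t]}\big)=V(\ga_{|t})\quad\text{when $s_j\neq0$}
\]
proved just before the theorem statement is precisely what ensures that the direction variation of each deformed path is controlled by $M$. The inclusion in $\cM_\Om$ follows exactly as in~\eqref{inclusPhiatcMn} since the $j$-th component of the vector field vanishes as soon as $v_j\in\ov\cS_{\Om}$ (because $\eta(t,v_j)=0$ there). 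The conservation properties $\Psi_t(\uV\cap\cN(\ga(a)))\subset\cN(\ga(t))$ and $\Psi_t(\uV\cap\cN_j)\subset\cN_j$ follow verbatim from the proof of Proposition~\ref{prp:constructPsit}, using that $\tfrac{\dd}{\dd t}(\ti\ga(t)-v_0(t))$ is parallel to $\ti\ga'(t)$ and that $\eta(t,(0,0))=0$.

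Next one must establish the analogue of Lemma~\ref{lmm:4.12}, namely that $\Phi^{a,t}(\ti\uV)$ is contained in the union over $L_1+\cdots+L_n=L(\ga_{|t})$ of products of sets $\cM_\Om^{\de'(t),M,L_j}$, with $\de'(t)\defeq\rho\,\ee^{-C\de\ii L(\ga|_{[a,t]})}$ for some constant~$C$. The control $\sum_j|\la_j'(t)|\le|\ga'(t)|$ is unchanged, giving $\sum\la_j(t)\le L(\ga_{|t})$. The distance control $\dist_1((V(\ga_{|t}),v_j(t)),\cS_\Om)\ge \de'(t)$ is obtained exactly as in~\eqref{ineq:etavjt} by noting that $h_j(t)\defeq\eta(t,v_j(t))$ is Lipschitz with $|h_j'(t)|\le\text{const}\cdot|\ti\ga'(t)|\, h_j(t)/D$, so Gronwall bounds $h_j$ both above and below, and the dichotomy argument at the end of the proof of Lemma~\ref{lmm:4.12} (using $\Om_{3\rho,M}=\O$) handles the case of small initial $\eta$.

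The bound~\eqref{4.4} for the Jacobian $\det[\pa\ze_i^t/\pa s_j]$ transposes without change: the proof of Lemma~\ref{lmm:auxiLipX} only uses the Lipschitz estimates for $\eta$ and the triangle inequality for $D$, which hold identically for the \ddfs\ versions of these functions, so a Gronwall argument with $g(t)=\text{const}\cdot\de\ii|\ga'(t)|$ gives $|\det[\pa\ze_i^t/\pa s_j]|\le \rho^n\,\ee^{Cn\de\ii L(\ga|_{[a,t]})}$. Combining with the integral representation~\eqref{eqconthatgugatbis} and taking suprema over $L_1+\cdots+L_n=L(\ga_{|t})$ (with a fixed majorant $M$ for the direction variation of each lifted path, by the key identity above) yields~\eqref{4.33} with $\de'$ and $c$ as in the proof of Theorem~\ref{thm:4.7}. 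The main obstacle is the direction variation control: one must check carefully that the extra ${\rm Re}$-coordinate $V(\ga_{|t})$ injected into~$\eta$ interacts correctly with the $\dist_1$ metric in $\R^2\times\C$ to produce both the lower bound $D\ge\min\{\de,\rho\}$ and the exponential Gronwall estimate uniformly in~$M$, but these are precisely the computations recorded in the paragraphs immediately preceding the theorem.
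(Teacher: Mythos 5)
Your overall strategy is the same as the paper's: keep the scheme of Theorem~\ref{thm:4.7}, replace the vector field~\eqref{eqdefvecX} by~\eqref{4.32}, invoke the lower bound $D\ge\min\{\de,\rho\}$ and the identity $V(\ze_j^{\,\cdot}|_{[0,t]})=V(\ga_{|t})$ established just before the statement, and transpose the construction of the deformation, the analogue of Lemma~\ref{lmm:4.12}, and the Jacobian estimate. Most of this transposition is indeed routine; in particular Lemma~\ref{lmm:auxiLipX} and the determinant bound carry over verbatim, because there one compares $\eta(t,\vec u\,)$ and $\eta(t,\vec v\,)$ at the \emph{same} time~$t$, so the extra time-dependence of~$\eta$ never enters.

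There is, however, a genuine gap at the step you describe as ``the distance control \dots is obtained exactly as in~\eqref{ineq:etavjt}''. You assert that $h_j(t)\defeq\eta(t,v_j(t))$ satisfies $|h_j'(t)|\le\mathrm{const}\cdot|\ti\ga'(t)|\,h_j(t)/D$, so that Gronwall bounds $h_j$ from below. This is false: unlike in the \dfs\ case, $\eta$ now depends on~$t$ explicitly through $V(\ga_{|t})$, and since $(\mu,v)\mapsto\dist_1\big((\mu,v),\cdot\big)$ is only $1$-Lipschitz one merely gets
\[
|h_j'(t)|\;\le\;\frac{\dd}{\dd t}V(\ga_{|t})\;+\;|v_j'(t)|
\;=\;\Big|\,{\rm Im}\,\frac{\ga''(t)}{\ga'(t)}\Big|\;+\;\frac{h_j(t)}{D}\,\big|\ti\ga'(t)\big|,
\]
and the first term is controlled neither by~$h_j$ nor by $|\ga'(t)|$ (think of a tight circular arc). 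Since $\Om_{L,M}\subseteq\Om_{L,M'}$ for $M\le M'$, the set $\ov\cS_\Om$ is upward-closed in the direction-variation coordinate and $V(\ga_{|t})$ is non-decreasing, so $\eta(t,\cdot)$ is non-increasing in~$t$; hence the \emph{upper} Gronwall bound, and with it case~(ii) of the dichotomy of Lemma~\ref{lmm:4.12}, survives. But the \emph{lower} bound, which is exactly what case~(i) needs, does not: integrating the extra term costs an additive $M$ (up to exponential factors), which is fatal since $h_j(a)\le\sqrt2\,\rho$. Concretely, $v_j$ may drift onto a point $(\la_0,\om_0)$ with $\om_0\in\Om_{\la_0,\mu_0}$ while $\mu_0>V(\ga_{|\tau})$ --- there $\eta$ is not small, so the flow does not slow down --- and then be caught when $V(\ga_{|\tau})$ later exceeds~$\mu_0$; the bound $D\ge\min\{\de,\rho\}$ only controls the \emph{sum} of the $\eta(t,v_j)$'s and does not exclude this. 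The paragraphs preceding the theorem, to which you defer, contain only the lower bound on~$D$ and the direction-variation identity, not this estimate. So this is the one place where the \ddfs\ case requires an argument genuinely different from Lemma~\ref{lmm:4.12} (for instance, one exploiting that only the singular points with activation level $\mu_0\le M+\de'$ are relevant), and neither your proposal nor the paper's one-line reduction to Theorem~\ref{thm:4.7} supplies it.
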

%%%%%%%%%%%%%%%%%%%%%%%%%%%%%%%%%%%%%%%%%%%%%%%%%
%\ecb

% We considered the case
% where the filtration on
% the paths $\Pi$
% is governed by the length.
% However,
% we can adapt our discussion
% to the multi-filtered case:
% We can use a filtration by
% $
% L_F(\ga)=\int_0^1
% F(\ga_{|t})dt
% $
% instead of or with
% the filtration by
% the length
% $
% L(\ga)=\int_0^1
% |\ga'(t)|dt,
% $
% where
% $
% F:\Pi\to\Rp.
% $
% For instance, set 
% $$
% L_1(\ga)=\int_0^1
% F_{\theta_1}(\ga_{|t})dt,
% \quad
% F_{\theta_1}(\ga_{|t})
% \defeq |{\rm Re}(e^{i\theta}\ga'(t))|,
% $$
% $$
% L_2(\ga)=\int_0^1
% F_{\theta_2}(\ga_{|t})dt,
% \quad
% F_{\theta_2}(\ga_{|t})
% \defeq \max\{0,{\rm Re}(e^{i\theta}\ga'(t))\}
% $$
% for $\theta_1,\theta_2\in\R$
% and consider a vector field
% \eqref{eqdefvecX}
% with 
% $$
% \ti\ga'=\big(F_{\theta_1}(\ga_{|t}),F_{\theta_2}(\ga_{|t}),
% |\ga'(t)|,\ga'(t)\big).
% $$
% Notice that,
% in such a case,
% a flow map defined by the vector field
% does not depend on the choice
% of parametrization of $\ga$.
% Then,
% Theorem \ref{thm:4.7} can be
% refined:
% We can state the estimate \eqref{4.6}
% by using compact subsets
% $$
% K_\Om^{\de,L_1,L_2,L}
% \defeq 
% \big\{\,\un\ze\in K_\Om^{\de,L} \mid 
% %
% \;\text{$\exists \ga\in\Pi_\Om^{\de,L}$ such that 
% %
% $L_j(\ga)\leq L_j$ $(j=1,2)$ and
% $\un\ze = \uga(1)$}
% %
% \,\big\}
% $$
% instead of $K_\Om^{\de,L}$.

%%%%%%%%%%%%%%%%%%%%%%%%%%%%%%%%%%%%%%%%%%%%%%%%%%%%

\section{Applications}\label{sec:5}

%%%%%%%%%%%%%%%%%%%%%%%%%%%%%%%%%%%%%%%%%%%%%%%%%%%%

In this section,
we display some applications of
our results of Section \ref{sec:4}.
We first introduce convergent power series
with coefficients in $\ti\gR_\Om$:
%%%%%%%%%%%%%%%%%%%%%%%%%%%%%%%%%%%%%%%%%%%%%%%%%%%%

\begin{dfn}
Given $\Om$ a \dfs\ and $r\ge1$,
we define $\ti\gR_\Om\{w_1,\cdots,w_r\}$ 
as the space % subspace
% of $\ti\gR_\Om[[w_1,\cdots,w_r]]$ consisting of formal series
of all 
$$
\ti F(z,w_1,\cdots,w_r)
=\sum_{k\in\Z_{\geq0}^{\,r}}
\ti F_{k}(z)w_1^{k_1}\cdots w_r^{k_r}
\in \ti\gR_\Om[[w_1,\cdots,w_r]]
$$
such that,
for every $\de,L>0$,
there exists a positive constant $C$
satisfying
$$
\|\, \ti F_{k}
\,
\|_{\Om}^{\de,L}
\leq C^{|k|+1}
\quad\text{for every $k=(k_1,\cdots,k_r)\in\Z_{\geq0}^{\,r}$,}
$$
where $|k| \defeq k_1+\cdots+k_r$
(with the notation of Definition~\ref{dfn:3.8} for $\| \cdot \|_{\Om}^{\de,L}$).
\end{dfn}

We can now deal with the substitution 
of resurgent formal series in a context more general than in Theorem~\ref{thmsubstgR}.

\begin{thm}   \label{thmsubstOmgR}
Let $r\ge1$ be integer and let $\Om_0$, \ldots, $\Om_r$ be \dfs\ 
Then for any 
$\ti F(w_1,\ldots,w_r) \in
\ti\gR_{\Om_0}\{w_1,\cdots,w_r\}$
%f
and for any $\ti\ph_1,\ldots,\ti\ph_r \in \C[[z\ii]]$ without constant
term, one has 
\[
\ti\ph_1 \in \ti\gR_{\Om_1}, \ldots, \ti\ph_r \in \ti\gR_{\Om_r}
\quad \Rightarrow \quad
\ti F(\ti\ph_1,\ldots,\ti\ph_r) \in \ti\gR_{\Om_0*\Om^{*\infty}},
\]
where $\Om \defeq \Om_1 * \cdots * \Om_r$.
\end{thm}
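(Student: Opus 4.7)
The plan is to expand
\[
\ti F(\ti\ph_1,\ldots,\ti\ph_r)=\sum_{k\in\Z_{\ge0}^r}\ti F_k\,\ti\ph_1^{k_1}\cdots\ti\ph_r^{k_r}
\]
and to show, first, that each summand belongs to $\ti\gR_{\Om_0*\Om^{*\infty}}$ and, second, that the series is normally convergent in the Fr\'echet topology on that space. The required inclusion of \dfs\ is
\[
\Om_0*\Om_1^{*k_1}*\cdots*\Om_r^{*k_r}\subset\Om_0*\Om^{*|k|}\subset\Om_0*\Om^{*\infty},
\]
which follows from $\Om_j\subset\Om=\Om_1*\cdots*\Om_r$ (the sum $*$ contains each summand by~\eqref{eq:defsumdfs}) together with the monotonicity of the sum operation~$*$.

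The essential analytic ingredient is the multi-\dfs\ version of Corollary~\ref{crl:4.7'}, obtained from Theorem~\ref{thm:4.9} by exactly the same Cauchy-inequality/inverse-Borel-transform derivation: for every $\de,L>0$ there exist $c,\de',L'>0$ such that, for every integer $n\ge1$, every sequence of \dfs~$\Om_{j_1},\ldots,\Om_{j_n}$ with $\Om_{j_i,4\de}=\O$, and every $\ti g_i\in\ti\gR_{\Om_{j_i}}$ without constant term,
\[
\|\ti g_1\cdots\ti g_n\|_{\Om_{j_1}*\cdots*\Om_{j_n}}^{\de,L}
\le\frac{c^{n+1}}{n!}\,
\|\ti g_1\|_{\Om_{j_1}}^{\de',L'}\cdots
\|\ti g_n\|_{\Om_{j_n}}^{\de',L'}.
\]
Decomposing $\ti F_k=F_{k,0}+\ti F_k'$ with $F_{k,0}\in\C$ and $\ti F_k'$ without constant term, the $k$-th summand (for $|k|\ge1$) splits as $F_{k,0}\,\ti\ph_1^{k_1}\cdots\ti\ph_r^{k_r}+\ti F_k'\,\ti\ph_1^{k_1}\cdots\ti\ph_r^{k_r}$, to which the above estimate applies with $n=|k|$ and $n=|k|+1$ respectively. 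Combining these two bounds with the monotonicity $\|\,\cdot\,\|_{\Om'}^{\de,L}\le\|\,\cdot\,\|_\Om^{\de,L}$ valid whenever $\Om\subset\Om'$, one obtains, after enlarging~$c$ if necessary,
\[
\big\|\ti F_k\,\ti\ph_1^{k_1}\cdots\ti\ph_r^{k_r}\big\|_{\Om_0*\Om^{*\infty}}^{\de,L}
\le\frac{c^{|k|+2}}{|k|!}\,\|\ti F_k\|_{\Om_0}^{\de',L'}\,A_1^{k_1}\cdots A_r^{k_r},
\qquad A_j\defeq\|\ti\ph_j\|_{\Om_j}^{\de',L'};
\]
the bound is trivial for $|k|=0$ since $\ti F_0\in\ti\gR_{\Om_0}\subset\ti\gR_{\Om_0*\Om^{*\infty}}$.

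Now the hypothesis $\ti F\in\ti\gR_{\Om_0}\{w_1,\ldots,w_r\}$ produces a constant $C>0$ such that $\|\ti F_k\|_{\Om_0}^{\de',L'}\le C^{|k|+1}$ for every~$k$, and the elementary inequality $|k|!\ge k_1!\cdots k_r!$ yields
\[
\sum_{k}\frac{c^{|k|+2}C^{|k|+1}}{|k|!}\,A_1^{k_1}\cdots A_r^{k_r}
\le c^2 C\sum_{k}\frac{(CcA_1)^{k_1}\cdots(CcA_r)^{k_r}}{k_1!\cdots k_r!}
= c^2 C\,\exp\!\bigl(Cc(A_1+\cdots+A_r)\bigr)<\infty.
\]
Thus the series defining $\ti F(\ti\ph_1,\ldots,\ti\ph_r)$ is normally convergent, hence convergent, in the Fr\'echet space $\ti\gR_{\Om_0*\Om^{*\infty}}$; its sum coincides coefficient-wise with the formal expansion, so $\ti F(\ti\ph_1,\ldots,\ti\ph_r)\in\ti\gR_{\Om_0*\Om^{*\infty}}$. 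The main obstacle in this plan is the multi-\dfs\ corollary above: one must verify that the constants $c,\de',L'$ supplied by Theorem~\ref{thm:4.9} can be chosen uniformly over arbitrary finite subsequences of $\Om_0,\ldots,\Om_r$, but this is built into Theorem~\ref{thm:4.9} itself since its constants depend only on $\de$ and~$L$ once the common vanishing condition $\Om_{j,4\de}=\O$ is secured by choosing $\de>0$ small enough at the outset.
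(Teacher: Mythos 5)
Your proposal is correct and follows essentially the same route as the paper: expand $\ti F(\ti\ph_1,\ldots,\ti\ph_r)$, bound each term via the multi-\dfs\ estimate of Theorem~\ref{thm:4.9} (in its Corollary~\ref{crl:4.7'} form), and sum using the growth condition defining $\ti\gR_{\Om_0}\{w_1,\ldots,w_r\}$. Your treatment is in fact slightly more careful than the paper's on two minor points---splitting off the constant term of $\ti F_k$ so that the corollary (stated for series without constant term) genuinely applies, and writing out the majorizing exponential series---but these are refinements of the same argument, not a different one.
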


%%%%%%%%%%%%%%%%%%%%%%%%%%%%%%%%%%%%%%%%%%%%%%%%%%%%

\begin{proof}
%[Proof of Theorem~\ref{thmsubstOmgR}]
Since the family 
$
\{\Om_0*\Om^{*k}\defeq \Om_0*\Om_1^{*k_1}*\cdots *\Om_r^{*k_r}
\ |\ 
k=(k_1,\cdots,k_r)\in\Z_{\geq0}^{\,r}
\}
$
of \dfs\ satisfies the conditions in Theorem \ref{thm:4.9}
for sufficiently small $\de>0$,
for every $L>0$,
there exist $\de',L',C>0$ such that
$$
\big\|\, 
\ti F_k\ti\ph^{k_1}\cdots\ti\ph^{k_r}
\big\|_{\Om_0*\Om^{*\infty}}^{\de,L}
\leq
\frac{C^{|k|+2}}{(|k|+1)!}
\big\|\, 
\ti F_k
\big\|_{\Om_0}^{\de',L'}
\big(\big\|\, 
\ti\ph_1
\big\|_{\Om_0}^{\de',L'}\big)^{k_1}
\cdots
\big(\big\|\, 
\ti\ph_r
\big\|_{\Om_0}^{\de',L'}\big)^{k_r}.
$$
Therefore,
since $\ti F(w_1,\ldots,w_r) \in
\ti\gR_{\Om_0}\{w_1,\cdots,w_r\}$,
we find that
$\ti F(\ti\ph_1,\ldots,\ti\ph_r)$ converges in 
$\ti\gR_{\Om_0*\Om^{*\infty}}$
and defines an $\Om_0*\Om^{*\infty}$-resurgent formal series.
\end{proof}
%%%%%%%%%%%%%%%%%%%%%%%%%%%%%%%%%%%%%%%%%%%%%%%%%%%%

Notice that,
in view of Theorem~\ref{propidgROmgR}, Theorem~\ref{thmsubstgR}
is a direct consequence of Theorem \ref{thmsubstOmgR}.

Next,
we show the following implicit function theorem
for resurgent formal series:
\begin{thm}   \label{thm:IRFT}
Let $\ti F(z,w)\in \ti\gR_\Om\{w\}$ and
assume that $F(x,w) \defeq \ti F(x\ii,w)$ satisfies
$F(0,0)=0$
and $\pa_wF(0,0)\neq0$.
Then, the unique solution $\ti\ph\in z\C[[z]]$ of
\begin{equation}\label{5.1}
\ti F(z,\ti\ph(z))=0
\end{equation}
satisfies
$\ti\ph\in\ti\gR_{\Om^{*\infty}}$.
\end{thm}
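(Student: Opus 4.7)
The plan is to reduce~\eqref{5.1} to a fixed point equation without linear term in $\ti\ph$ and expand the unique formal solution as a tree series whose convergence in the Fr\'echet space $\ti\gR_{\Om^{*\infty}}$ follows from Corollary~\ref{crl:4.7'}. Writing $\ti F=\ti F_0+\ti F_1 w+\sum_{k\ge 2}\ti F_k w^k$, the hypotheses $F(0,0)=0$ and $\pa_w F(0,0)=a\neq 0$ amount to saying that $\ti F_0$ has no constant term and that $\ti F_1=a+\ti g_1$ with $\ti g_1\in\ti\gR_\Om$ without constant term. Applying Corollary~\ref{crl:4.7'} to the iterated products $\ti g_1^{j}$ shows that $\ti F_1\ii = a\ii\sum_{j\ge 0}(-\ti g_1/a)^j$ converges in $\ti\gR_{\Om^{*\infty}}$; a second application yields, for every $\de,L>0$, a constant $C$ such that $\|\ti A_k\|_{\Om^{*\infty}}^{\de,L}\le C^{k+1}$, where $\ti A_k\defeq-\ti F_k/\ti F_1$. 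Hence $G(z,w)\defeq \ti A_0+\sum_{k\ge 2}\ti A_k w^k$ belongs to $\ti\gR_{\Om^{*\infty}}\{w\}$, and~\eqref{5.1} becomes equivalent to $\ti\ph=G(z,\ti\ph)$ with $\ti A_0$ having no constant term.

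I then expand the unique formal solution as $\ti\ph=\sum_{n\ge 1}\ti\psi_n$, where $\ti\psi_n$ collects the contributions containing exactly $n$ factors $\ti A_0$. The recursion $\ti\psi_1=\ti A_0$ and $\ti\psi_n=\sum_{k\ge 2}\ti A_k\sum_{n_1+\cdots+n_k=n,\,n_i\ge 1}\ti\psi_{n_1}\cdots\ti\psi_{n_k}$ expresses $\ti\psi_n$ as a finite sum of monomials $p_\tau$ indexed by rooted plane trees $\tau$ with $n$ leaves (each labelled by $\ti A_0$) and $r\le n-1$ internal nodes of valence $k\ge 2$ (each labelled by the corresponding $\ti A_k$). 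To bound $p_\tau$, split $\ti A_k=a_k+\ti B_k$ into constant and no-constant-term parts: $p_\tau$ becomes a sum over subsets $S$ of internal nodes of terms of the form $\big(\prod_{j\in S}a_{k_j}\big)\cdot\ti A_0^n\prod_{j\notin S}\ti B_{k_j}$, whose second factor is a product of $n+r-|S|\ge n$ series without constant term, to which Corollary~\ref{crl:4.7'} applies and delivers a denominator $(n+r-|S|)!$. Using the bounds $|a_k|,\|\ti B_k\|,\|\ti A_0\|\le C^{k+1}$ together with the tree identity $\sum_j k_j=n+r-1$ and a Catalan-type exponential bound on the number of plane trees of given shape, summation over $S$ and then over such $\tau$ yields an estimate of the form $\|\ti\psi_n\|_{\Om^{*\infty}}^{\de,L}\le K^n/n!$ for some $K=K(\de,L)$. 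Hence $\sum\ti\psi_n$ converges absolutely in the Fr\'echet space $\ti\gR_{\Om^{*\infty}}$; its sum satisfies the fixed-point equation, and by the formal uniqueness of the solution it coincides with $\ti\ph$.

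The main obstacle is that the $\ti A_k$ for $k\ge 2$ may have nonzero constant terms, so Corollary~\ref{crl:4.7'} cannot be applied directly to the full product $p_\tau$. The decisive observation is that each $\ti\psi_n$ systematically contains at least $n$ factors (the leaves $\ti A_0$) that automatically lack a constant term, and these are exactly what produces the factorial denominator $1/n!$ needed to dominate both the constants $a_k$ brought out by the splitting and the exponential growth of the number of trees indexing the expansion.
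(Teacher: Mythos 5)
Your argument is correct, and it shares with the paper's proof the two essential ingredients: an expansion of the solution graded by the number of insertions of the constant-term-free datum (your $\ti A_0$, the paper's $\ti F_0$), and the factorial gain $1/m!$ from Corollary~\ref{crl:4.7'} for products of $m$ series without constant term, which dominates the exponential proliferation of terms. Where you genuinely diverge is in the treatment of the constant terms of the higher coefficients. The paper first performs the change of unknown $\ti\psi=z\ii(\ti\ph-\ph_1 z)$ and normalizes $\pa_wF(0,0)=-1$, after which \emph{all} coefficients $\ti F_k$ are constant-term-free; it can then quote the closed Lagrange-inversion formula for $\ti H_m$ from [Sau15], apply Corollary~\ref{crl:4.7'} verbatim to each product $\ti F_{n_1}\cdots\ti F_{n_k}$, and conclude via the substitution theorem (Theorem~\ref{thmsubstOmgR}) applied to $\ti H(z,\ti F_0)$. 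You instead divide by $\ti F_1$ (correctly supplying the inversion of a resurgent series with nonzero constant term in $\ti\gR_{\Om^{*\infty}}$), keep the constants $a_k$ of the $\ti A_k$, and absorb them by noting that every tree monomial of $\ti\psi_n$ contains at least $n$ constant-term-free leaf factors, so the splitting $\ti A_k=a_k+\ti B_k$ still leaves $\ge n$ admissible factors and hence a $1/n!$; this costs extra combinatorics (the subsets $S$, a Schr\"oder-type tree count) that the paper's normalization avoids, but stays entirely within the fixed-point formulation and dispenses with Theorem~\ref{thmsubstOmgR}. Two points you should make explicit: the bound $\|\ti A_k\|_{\Om^{*\infty}}^{\de,L}\le C^{k+1}$ is not a bare application of Corollary~\ref{crl:4.7'}, since $\ti F_k$ and $\ti F_1\ii$ may both have constant terms --- it follows from the continuity of multiplication on $\ti\gR_{\Om^{*\infty}}$ noted after Definition~\ref{dfn:3.8}, using $\Om^{*\infty}*\Om^{*\infty}=\Om^{*\infty}$; and the identification of the Fr\'echet-space limit of $\sum\ti\psi_n$ with the formal solution needs the (easy) remark that convergence of the Borel transforms near $0$ implies coefficientwise convergence, though the paper is equally terse on this.
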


\begin{proof}
We rewrite $\ti F(z,w)$ into the form
$$
\ti F(z,w)
=\ti F_0(z)+\pa_wF(0,0)w+
\sum_{k=1}^\infty
\ti F_k(z)w^k.
$$
Considering \eqref{5.1}
as the equation for
$
\ti\psi=z^{-1}(
\ti\ph(z)-\ph_1z),
$
we can assume that~$\ti F_k$ has no constant term for $k=0,1,\ldots$
Further,
we can assume without loss of generality
that $\pa_wF(0,0)=-1$.
Then, the unique solution
$\ti\ph\in z\C[[z]]$ of \eqref{5.1}
can be written as $\ti\ph=\ti H(z,\ti F_0)$,
where
$$
\ti H(z,w)=
\sum_{m\geq1}
\ti H_m(z)w^m,
\qquad
\ti H_m\defeq 
\sum_{k\geq1}
\frac{(m+k-1)!}{m!k!}
\sum_{
\substack{
n_1+\cdots+n_k=m+k-1
\\
n_1,\cdots,n_k\geq1
}
}
\ti F_{n_1}\cdots \ti F_{n_k}
$$
(see proof of Theorem 4 in \cite{NLresur}
for the detail).
Since $\ti F(z,w)\in\ti\gR_\Om\{w\}$,
we obtain from Corollary~\ref{crl:4.7'}
the following estimates:
For every $\de,L>0$,
there exist $\de',L',C>0$ such that
$
\|\, F_k\,
\|_{\Om}^{\de',L'}
\leq C^{k+1}
$
and
\begin{align*}
\|
\ti H_{m}
\|_{\Om^{*\infty}}^{\de,L}
&\leq
\sum_{k\geq1}
\frac{(m+k-1)!}{m!k!}
\sum_{
\substack{
n_1+\cdots+n_k=m+k-1
\\
n_1,\cdots,n_k\geq1
}
}
\frac{C^{k+1}}{k!}
\|\ti F_{n_1}\|_{\Om}^{\de',L'}
\cdots 
\|\ti F_{n_k}\|_{\Om}^{\de',L'}
\\
&\leq
\sum_{k\geq1}
2^{m+k}
\sum_{
\substack{
n_1+\cdots+n_k=m+k-1
\\
n_1,\cdots,n_k\geq1
}
}
\frac{C^{m+3k}}{k!}
\\
&\leq
\sum_{k\geq1}
2^{2m+3k-2}
\frac{C^{m+3k}}{k!}
\\
&\leq
e^{8C^3}
(4C)^{m}.
\end{align*}
This yields
$\ti H(z,w)\in\ti\gR_{\Om^{*\infty}}\{w\}$,
whence,
$\ti H(z,\ti F_0(z))\in\ti\gR_{\Om^{*\infty}}.$
\end{proof}

% \newpage
\medskip

\noindent {\em Acknowledgements.}
{This work has been supported by Grant-in-Aid for JSPS Fellows Grant
  Number 15J06019, French National Research Agency reference
  ANR-12-BS01-0017 and Laboratoire Hypathie A*Midex.
The authors thank Fibonacci Laboratory (CNRS UMI~3483), the Centro Di
Ricerca Matematica Ennio De Giorgi and the Scuola Normale Superiore di
Pisa for their kind hospitality.
}

%%%%%%%%%%%%%%%%%%%%%%%%%%%%%%%%%%%%%%%%%%%%%%%%%%%%%%%

\ifx\undefined\bysame 
\newcommand{\bysame}{\leavevmode\hbox to3em{\hrulefill}\,} 
\fi 

%%%%%%%%%%%%%%%%%%%%%%%%%%%%%%%%%%%%%%%%%%%%%%%%%%%%
%%%%%%%%%%%%%%%%%%%%%%%%%%%%%%%%%%%%%%%%%%%%%%%%%%%%

%%%%%%%%%%%%%%%%%%%%%%%%%%%%%%%%%%%%%%%%%%%%%%%%%%%%
%%%%%%%%%%%%%%%%%%%%%%%%%%%%%%%%%%%%%%%%%%%%%%%%%%%%

% \bigkip % \vspace{2.3cm}

% \noindent
% %
% Shingo \textsc{Kamimoto}, % \\
% Graduate School of Sciences,
% Hiroshima University \\ 1-3-1 Kagamiyama, Higashi-Hiroshima,
% Hiroshima 739-8526, Japan.

% \medskip % \vspace{1cm}

% \noindent
% %
% David \textsc{Sauzin}, % \\
% IMCCE, CNRS--Observatoire de Paris, France.\\
% %
% {\itshape Current address}: Laboratorio Fibonacci, CNRS--CRM Ennio De
%   Giorgi, SNS Pisa, Italy.

% \vspace{2.3cm}

% \begin{flushright}

% \textit{\small July 7, 2017}

% \end{flushright}

%%%%%%%%%%%%%%%%%%%%%%%%%%%%%%%%%%%%%%%%%%%%%%%%%%%%
%%%%%%%%%%%%%%%%%%%%%%%%%%%%%%%%%%%%%%%%%%%%%%%%%%%%

\end{document}